\newtheorem{thm}{Theorem}[section]
\newtheorem{theorem}{Theorem}
\newtheorem{lem}[thm]{Lemma}
\newtheorem{cor}[thm]{Corollary}
\newtheorem{prop}[thm]{Proposition}
\newtheorem{pro}[thm]{Proposition}
\newcommand{\C}{\mathbb{C}}
\makeatletter\@addtoreset{equation}{section}\makeatother 
\titleformat{\section}{\centering\normalsize}{\textsc{\thesection.}}{1em}{\textsc}
\titleformat{\subsection}{\normalsize}{\thesubsection.}{1em}{\textbf}
\begin{document}

\title{Quasisymmetric rigidity of Sierpi\'nski carpets  $F_{n,p}$}

\author{Jinsong Zeng}
\address{Jinsong Zeng, School of Mathematics, Fudan University, 200433, Shanghai, P. R. China}
\email{10110180006@fudan.edu.cn}

\author{Weixu Su}
\address{Weixu Su, School of Mathematics, Fudan University, 200433, Shanghai, P. R. China}
\email{suweixu@gmail.com}

\begin{abstract}
We study a new class of square Sierpi\'nski carpets $F_{n,p}$ ($5\leq n, 1\leq p<\frac{n}{2}-1$) on $\mathbb{S}^2$,
which are not quasisymmetrically equivalent to the standard Sierpi\'{n}ski
carpets. We prove that the group of quasisymmetric self-maps of each $F_{n,p}$ is the Euclidean isometry group.
 We also establish that  $F_{n,p}$ and $F_{n',p'}$ are quasisymmetrically equivalent if and only if $(n,p)=(n',p')$.
\end{abstract}

\subjclass[2010]{Primary 37F45; Secondary 37F25, 37F35}

\keywords{}

\date{\today}

\thanks{J. Zeng is partially supported by NSFC 11201078; W. Su is partially supported by NSFC 11201078, CPSF 2012M510778 and Shanghai Postdoctoral Sustentation Fund (Grant No. 12R21410400).}

\maketitle
\section{Introduction}\label{sec_introduction}
The quasisymmetric geometry of Sierpi\'nski carpets is related to the
study of Julia sets in complex dynamics and boundaries of Gromov hyperbolic groups.
For background and research progress,
we recommend the survey of M. Bonk \cite{Bo2}.

Let $\mathbb{S}^2$ be the unit sphere in $\mathbb{R}^3$.
Let $S=\mathbb{S}^2\setminus \bigcup_{i\in \mathbb{N}} D_i$ be the complement in $\mathbb{S}^2$ of countably many
pair-wise disjoint open Jordan regions $D_i \subset \mathbb{S}^2$.
$S$ is called a \emph{(Sierpi\'nski) carpet} if $S$ has empty interior, diam $(D_i)\to 0$ as $i\to \infty$,
and $\partial D_i\cap \partial D_j=\emptyset$ for all $i\neq j$.
The boundary of $D_i$, denoted by $C_i$, is called a \emph{peripheral circle} of $S$.
A \emph{round carpet } is a carpet on  $\mathbb{S}^2$ such that all of its peripheral circles
are geometric circles. Typical Examples of round carpets are limit sets of convex co-compact Kleinian groups.

Topologically all carpets are the same
\cite{Why}. Much richer structure arises if we consider quasisymmetric geometry of metric carpets.
The famous conjecture of Kapovich-Kleiner \cite{KK} predicts that if $G$ is a hyperbolic group
with  boundary  $\partial_\infty G$
homeomorphic to a Sierpi\'nski carpet, then $G$ acts geometrically (the action is isometrical, properly discontinuous and co-compact) on a convex
subset of $\mathbb{H}^3$ with non-empty totally geodesic boundary.
The Kapovich-Kleiner conjecture is equivalent to the conjecture that the carpet $\partial_\infty G$ (endowed with
the ``visual'' metric) is
quasisymetriclly equivalent to a round carpet on  $\mathbb{S}^2$.
The conjecture is true for carpets that can be quasisymmetrically embedding in $\mathbb{S}^2$ \cite{Bo1}.

The concept of quasisymmetric map between metric spaces was
 defined by Tukia and V\"ais\"al\"a \cite{TV}. Let $f: X\to Y$ be a homeomorphism between two metric spaces $(X,d_X)$ and $(Y,d_Y)$.
$f$ is \emph{quasisymmetric} if there exists a homeomorphism $\eta: [0,\infty)\to [0,\infty)$ such that
$$\frac{d_Y(f(x),f(y))}{d_Y(f(x),f(z))}\leq \eta (\frac{d_X(x,y)}{d_X(x,z)}), \  \forall \ x,y,z\in X, x\neq z.$$
It follows from the definition that the quasisymmetric self-maps of $X$ form a group $\mathrm{QS}(X)$.

A homeomorphism $f: X\to Y$ is called \emph{quasi-M\"obius} if there exists a homeomorphism $\eta: [0,\infty)\to [0,\infty)$ such that
for all 4-tuple $(x_1,x_2,x_3,x_4)$ of distinct points in $X$, we have
$$[f(x_1),f(x_2),f(x_3),f(x_4)]\leq \eta ([x_1,x_2,x_3,x_4]),$$
where
$$[x_1,x_2,x_3,x_4]=\frac{d_X(x_1,x_3)d_X(x_2,x_4)}{d_X(x_1,x_4)d_X(x_2,x_3)}$$
is the \emph{metric cross-ratio}.

It is not hard  to check that a quasisymmetric map between metric spaces is  quasi-M\"obius. Conversely, any
quasi-M\"obius map between bounded metric spaces is quasisymmetric \cite{TV}.

An important tool in the study of quasisymmetric maps is the conformal modulus
of a given family of paths. The notion of conformal modulus (or extremal length)
was first introduced by Beurling and Ahlfors \cite{BA}. It has many applications in complex analysis
and metric geometry \cite{LV,Hei}.
In the work of Bonk and Merenkov \cite{BM},
it was proved that every quasisymmetric self-homeomorphism
of the standard $1/3$-Sierpi\'nski carpet $S_3$ is a Euclidean
isometry. For the standard $1/p$-Sierpi\'nski carpets $S_p$, $p \geq 3 $ odd, they showed that the groups $\mathrm{QS}(S_p)$ of
 quasisymmetric self-maps are finite dihedral. They also established
that $S_p$ and $S_q$ are quasisymmetrically equivalent if only if $p=q$.
The main tool in their proof
is  the  \emph{carpet modulus}, which is a certain discrete modulus of a path family and
is preserved under quasisymmetric maps of carpets.

\begin{figure}[!hbp]
\centering
\includegraphics[width=0.45\linewidth]{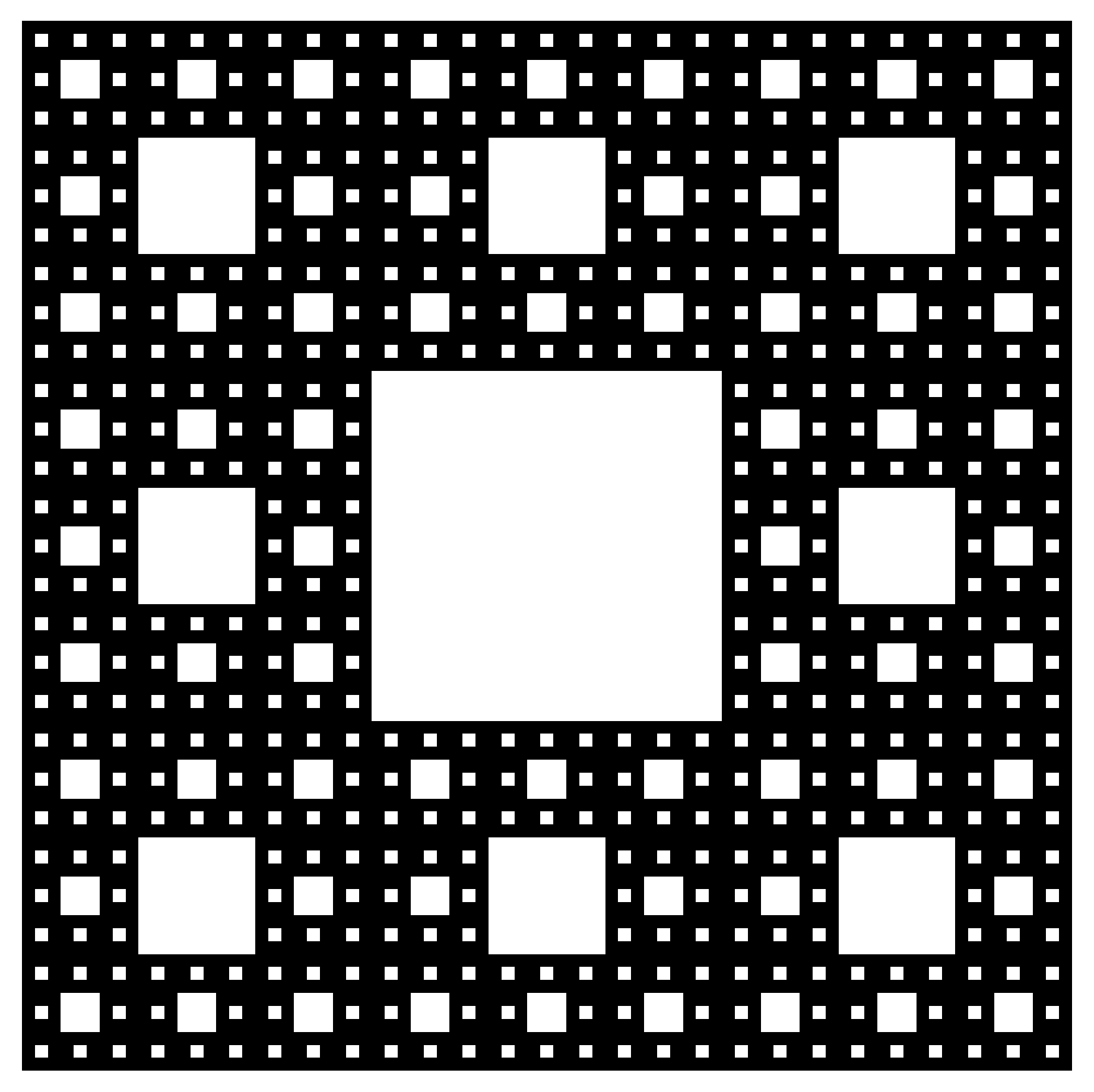}
\caption{The standard Sieipi\'nski carpet $S_3$.}
\label{fig:carpet}
\end{figure}

The aim of this paper is to extend Bonk-Merenkov's results to a new class of Sierpi\'nski carpets.
Unless otherwise indicated, we will equip a carpet $S=\mathbb{S}^2\setminus \bigcup_{i\in \mathbb{N}} D_i$  with the spherical
metric. Note that when a carpet is contained in a compact set $K$ of $\mathbb{C}\subset \mathbb{C}\cup \{\infty\}\cong \mathbb{S}^2$,
the Euclidean and the spherical metrics are bi-Lipschitz equivalent on $K$.

\subsection{Main results}  Let $5\leq n, 1\leq p<\frac{n}{2}-1$ be integers. Let $Q^{(0)}_{n,p}=[0,1]\times[0,1]$ be
the closed unit square in $\mathbb{R}^2$.
We first subdivide $Q^{(0)}_{n,p}$ into $n^2$ subsquares with equal side-length $1/n$ and remove the interior of four subsquares, each
 has side-length $1/n$ and is of
 distance $\sqrt{2}p/n$ to  one of the four corner points of $Q_{n,p}^{(0)}$.

 The resulting set $Q^{(1)}_{n,p}$ consists of $(n^2-4)$ squares of side-length $1/n$. Inductively, $Q^{(k+1)}_{n,p}$,
$k\geq 1$, is obtained from $Q_{n,p}^{(k)}$ by subdividing each of the remaining squares in the subdivision of
$Q^{(k)}_{n,p}$ into $n^2$ subsquares of equal side-length $1/n^{k+1}$ and removing the interior of four subsquares as we have
done above.

The Spierpi\'nski carpet $F_{n,p}$ is the intersection of all the sets $Q^{(k)}_{n,p}$, i.e.,
$$F_{n,p}=\bigcap_{k=0}^{+\infty}Q_{n,p}^{(k)}.$$ See Figure \ref{fig:carpet}.

\begin{figure}[!hbp]
\centering
\includegraphics[width=0.45\linewidth]{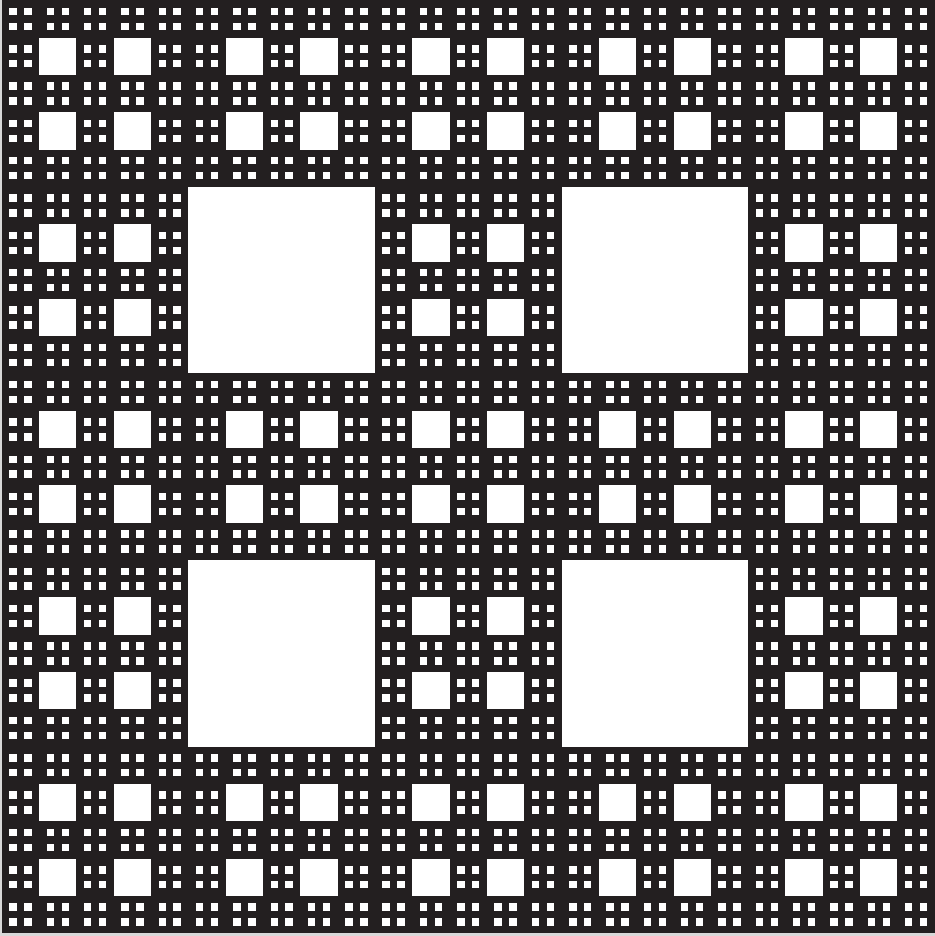}
\caption{The carpet $F_{5,1}$.}
\label{fig:carpet}
\end{figure}

The following theorem will be proved in Section \ref{sec_distinguished_pairs_of_peripheral_circles}.
It shows that, from the point of view of quasiconformal geometry,
 the carpets $F_{n,p}$ are different with the standard Sierpi\'nski carpets $S_m, m\geq 3$ odd
 (note that the standard Sierpi\'nski carpets $S_m$ is constructed from a similar process, by removing the interior of
 the middle square in each steps).

\begin{theorem}\label{thm:sp}
Let $5\leq n, 1\leq p<\frac{n}{2}-1$ be integers.  The carpet $F_{n,p}$ is not quasisymmetrically equivalent to
the Standard Sierpi\'nski carpet $S_m, m\geq 3$ odd.
\end{theorem}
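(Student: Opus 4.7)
The plan is to use the carpet modulus, introduced by Bonk and Merenkov, as the quasisymmetric invariant that distinguishes $F_{n,p}$ from $S_m$. Recall that every quasisymmetric map between two carpets on $\mathbb{S}^2$ whose peripheral circles are uniformly relatively separated uniform quasicircles must send peripheral circles to peripheral circles; both $F_{n,p}$ and $S_m$ satisfy these hypotheses, so any hypothetical quasisymmetric equivalence $h\colon F_{n,p}\to S_m$ would induce a bijection on peripheral circles that preserves the carpet modulus of every pair. The strategy is then to exhibit a pair (or small configuration) of peripheral circles of $F_{n,p}$ whose carpet modulus is not realised by any pair in $S_m$.

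The structural asymmetry between the two constructions is the driver. In $F_{n,p}$ the outer boundary $C_0=\partial Q_{n,p}^{(0)}$ is surrounded at the first level by four peripheral circles $C_1,\ldots,C_4$, each placed at distance $\sqrt{2}p/n$ from one of the four corners, and iterating produces a self-similar chain of peripheral circles collapsing to each corner. In $S_m$ (with odd $m\geq 3$) the first-level removal is a single square at the centre of $Q^{(0)}$, at distance $(m-1)/(2m)$ from every side, and the corresponding chain of peripheral circles near any corner is governed by $m$ alone. The concrete argument, to be carried out in Section~\ref{sec_distinguished_pairs_of_peripheral_circles}, selects a distinguished pair of peripheral circles of $F_{n,p}$ (naturally $C_0$ paired with one of the $C_i$, or a triple of mutually adjacent corner circles), computes or rigorously bounds its carpet modulus by constructing admissible weights adapted to the four-corner symmetry, and then applies the Bonk--Merenkov scheme to $S_m$ to show that no pair of peripheral circles there can realise the same value.

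The main obstacle will be the explicit comparison of carpet moduli across the two fractals: one has to transfer extremal weights and path families from the centred-removal setting of $S_m$ to the corner-removal setting of $F_{n,p}$, and rule out any numerical coincidence under which a pair of moduli of $F_{n,p}$ and $S_m$ happen to agree. The cleanest way around this, I expect, is to isolate a structural characterisation of the distinguished pair -- for instance a ratio between the carpet modulus of $(C_0,C_i)$ and that of an appropriate sub-pair obtained from the self-similarity of $F_{n,p}$ -- that is forced by the four-corner structure and therefore cannot be mimicked inside any standard carpet $S_m$.
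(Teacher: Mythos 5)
Your proposal correctly identifies the tool (carpet modulus of pairs of peripheral circles, which is preserved because any quasisymmetric equivalence of such carpets extends to a quasiconformal map of $\mathbb{S}^2$) and the relevant structural asymmetry (four corner squares versus one central square). But it stops short of the actual argument: the decisive step --- naming the invariant that actually distinguishes $F_{n,p}$ from $S_m$ --- is missing. The candidate you put forward, exhibiting a pair of peripheral circles of $F_{n,p}$ whose modulus value is not realised by any pair in $S_m$ (or a ratio of moduli of a pair and a sub-pair), runs exactly into the obstacle you flag yourself: these moduli are not explicitly computable, and nothing in your outline rules out a numerical coincidence between a modulus occurring in $F_{n,p}$ and one occurring in $S_m$. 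As written, this is a plan whose key lemma has not been found.

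The paper's way out is a counting argument that avoids all numerical comparison. Maximize $\mathrm{mod}_S(\Gamma(C,C'))$ over all pairs of peripheral circles of the carpet $S$. For $S=S_m$, Lemma 5.1 of Bonk--Merenkov asserts that this maximum is attained by exactly one pair, namely the outer circle together with the first central square. For $S=F_{n,p}$, Lemma \ref{interchange} shows that among non-adjacent pairs the maximizers are precisely the four pairs $\{O,M_i\}$; more robustly, since no peripheral circle of $F_{n,p}$ other than $O$ is fixed by the full dihedral isometry group, no single pair can be invariant under that group, so the set of maximizing pairs has at least two elements. The multiplicity of the maximal modulus value is a quasisymmetric invariant, and $1\neq 2$. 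This multiplicity is exactly the ``structural characterisation forced by the four-corner symmetry'' that you were looking for; replacing your value-based comparison by this counting statement would close the gap.
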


It was proved by Bonk and Merenkov \cite{BM} that for $m \geq 3 $ odd the quasisymmetric group
$\mathrm{QS}(S_m)$ is a finite dihedral group. Moreover, when $m=3$, $\mathrm{QS}(S_3)$
is the Euclidean isometry group of $S_3$. In Section \ref{sec_proof_of_theorems}, we will show that
\begin{theorem}\label{thm:rigidity}
Let $f$ be a quasisymmetric self-map of $F_{n,p}$. Then $f$ is a Euclidean isometry.
\end{theorem}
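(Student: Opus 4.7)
The plan is to adapt the Bonk--Merenkov strategy for the standard Sierpi\'nski carpets $S_m$ to the carpets $F_{n,p}$, in three main stages. As in \cite{BM}, the heart of the argument is to combinatorially constrain $f$ using quasisymmetric invariants derived from carpet modulus, and then to promote the combinatorial constraint to a pointwise identification.

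Stage 1 (identifying the outer square). I would first invoke the distinguished-pairs analysis of Section \ref{sec_distinguished_pairs_of_peripheral_circles} to show that any quasisymmetric self-map $f$ must preserve the outer peripheral circle $C_0 = \partial Q^{(0)}_{n,p}$. The idea is that $C_0$ is characterized intrinsically among peripheral circles of $F_{n,p}$ by a carpet-modulus invariant --- for instance, it is the unique peripheral circle participating in infinitely many distinguished pairs with a prescribed asymptotic behavior, or more crudely, it is distinguished by being the peripheral circle of maximal carpet modulus against every other peripheral circle. Since carpet modulus is preserved by quasisymmetries of the carpet, $f(C_0) = C_0$.

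Stage 2 (reducing to dihedral symmetry). Next, I would identify the four level-one peripheral circles $C_1,C_2,C_3,C_4$ bounding the removed squares near the corners of $Q^{(0)}_{n,p}$. These are distinguished, for example, as the peripheral circles of largest diameter after $C_0$, or by an analogous modulus argument that refines Stage 1, and each $C_i$ is associated with a unique corner of $C_0$. It follows that $f$ permutes $\{C_1,C_2,C_3,C_4\}$, inducing a permutation $\sigma$ of the four corners of $Q^{(0)}_{n,p}$ realized by an element of the dihedral isometry group $D_4$ of the unit square. Replacing $f$ by $\sigma^{-1}\circ f$, I may assume $f$ fixes $C_0$ setwise, fixes each of $C_1,\dots,C_4$ setwise, and fixes each of the four corners of the unit square.

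Stage 3 (from combinatorial to pointwise rigidity). The remaining task is to show that the normalized $f$ is the identity. Applying the invariance of Stages 1 and 2 inductively at each level of the self-similar construction --- using that $f$ restricted to any sub-carpet of $F_{n,p}$ is again quasisymmetric, and that carpet modulus scales in the expected way --- I would show that $f$ preserves each peripheral circle setwise. Because the level-$k$ corner peripheral circles accumulate densely on $C_0$, and $f$ fixes the four corners, it follows that $f$ fixes $C_0$ pointwise. Iterating the same argument at every scale yields that $f$ fixes each peripheral circle pointwise; since the union of peripheral circles is dense in $F_{n,p}$, continuity forces $f=\operatorname{id}$, and the original map is an element of $D_4$, as required.

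The main obstacle I expect lies at the two modulus steps of Stages 1 and 2: one must show that the distinguished-pair invariants from Section \ref{sec_distinguished_pairs_of_peripheral_circles} actually isolate $C_0$, and subsequently $\{C_1,\ldots,C_4\}$, as canonical invariant sets, without leaving room for $f$ to swap them with unintended circles. A secondary but delicate point is the combinatorial-to-pointwise passage in Stage 3: the Bonk--Merenkov proof for $S_3$ exploits reflection symmetries together with a uniqueness property for a specific extremal modulus, and verifying that a parallel argument carries through for $F_{n,p}$, where the removed squares cluster near corners rather than filling a center, requires care in tracking how carpet modulus behaves under the peculiar geometry of $F_{n,p}$.
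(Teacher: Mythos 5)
Your Stage 1 is where the proposal breaks down, and it is precisely the step the paper identifies as the hard part. The carpet-modulus invariant of Section \ref{sec_distinguished_pairs_of_peripheral_circles} is a \emph{symmetric} function of a pair of peripheral circles: Lemma \ref{interchange} shows that the extremal pairs are exactly $\{O,M_i\}$, $i=1,\dots,4$, so the modulus argument only forces $f$ to preserve the five-element set $\{O,M_1,\dots,M_4\}$ (Corollary \ref{qs_self_maps}); it cannot tell $O$ apart from an $M_i$, since $\mathrm{mod}_{F_{n,p}}(\Gamma(O,M_1))$ is simultaneously ``the maximal modulus against some partner'' for $O$ and for $M_1$. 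Ruling out $f(O)=M_i$ is the content of Sections \ref{sec_weak_tangent_spaces} and \ref{sec_proof_of_theorems}: one compares the weak tangent of $F_{n,p}$ at a corner of $O$ (one quarter-plane copy) with the weak tangent at a corner of $M_i$ (three copies glued), proves via a group-equivariant modulus computation that no normalized quasisymmetric map exists between them (Proposition \ref{pro_weak_space_normalized_qs}), and then uses an orbit-counting argument on the stabilizer of $\{O,M_1\}$ to derive a contradiction (Proposition \ref{pro_noninterchange}). Your proposal contains no substitute for this mechanism, and you yourself flag the isolation of $C_0$ as the main obstacle; it is not merely an obstacle but a missing idea.

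Two further points. In Stage 2, ``largest diameter after $C_0$'' is not a quasisymmetric invariant, so it cannot be used to distinguish the $M_i$; and the assertion that the induced permutation of $\{M_1,\dots,M_4\}$ is realized by an element of the dihedral group is not automatic --- a priori an orientation-preserving $f$ fixing $O$ and $M_1$ could cyclically permute $M_2,M_3,M_4$, and excluding this order-$3$ possibility is exactly what the paper's proof of Theorem \ref{thm:rigidity} does via uniformization by a round carpet and M\"obius rigidity (Theorems \ref{uniformization_by_round_carpets} and \ref{quasisymmetric_rigidity_of_round_carpets}). Finally, your Stage 3 (propagating setwise invariance through all scales and concluding by density) is not how the paper finishes: once the stabilizer of $(O,M_1)$ among orientation-preserving maps is shown to be trivial, the theorem follows at once from the Three-Circle rigidity (Corollary \ref{three_circle_theorem}); your density argument would in any case need the very corner-fixing facts whose proof requires the weak tangent machinery you omitted.
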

Note that the Euclidean isometric group  of $F_{n,p}$ (and $S_m$), consists of eight elements, is
the group generated by the reflections in the diagonal $\{(x,y)\in \mathbb{R}^2 : x=y\}$
and the vertical line $\{(x,y)\in \mathbb{R}^2 : x=\frac{1}{2}\}$.

We will also prove that
\begin{theorem}\label{thm:equivalent}
Two  Sierpi\'{n}ski carpets $F_{n,p}$ and $F_{n',p'}$ are quasisymmetrically equivalent if and only if $(n,p)=(n',p')$.
\end{theorem}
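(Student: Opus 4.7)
The ``if'' direction is immediate. For the ``only if'' direction, suppose $f\colon F_{n,p}\to F_{n',p'}$ is a quasisymmetric homeomorphism; the plan is to show that the quasisymmetric structure of $F_{n,p}$ determines the pair $(n,p)$. The main tool, as in \cite{BM}, is the carpet modulus of a pair of peripheral circles, a discrete extremal length that is a quasisymmetric invariant.

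The first step is to isolate distinguished peripheral circles. Using the analysis of distinguished pairs developed in Section \ref{sec_distinguished_pairs_of_peripheral_circles}, I would show that $f$ sends the outer boundary $C_0=\partial([0,1]^2)$ of $F_{n,p}$ to the outer boundary of $F_{n',p'}$ and permutes the four corner peripheral circles $C^{(1)},\dots,C^{(4)}$ removed at the first level of the construction onto the analogous corner circles of $F_{n',p'}$. Topologically, these corner circles are characterized as the unique four peripheral circles whose closures meet $C_0$ along nontrivial arcs, and invariance of the distinguished-pair data under $f$ forces this characterization to be respected.

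The second step uses carpet modulus invariance. Writing $\mathrm{mod}(A,B;F)$ for the carpet modulus of a pair of peripheral circles $(A,B)$ in a carpet $F$, quasisymmetric invariance yields
\begin{equation*}
\mathrm{mod}(C_0,C^{(1)};F_{n,p})=\mathrm{mod}(C'_0,f(C^{(1)});F_{n',p'}),
\end{equation*}
together with analogous identities for distinguished pairs such as $(C^{(i)},C^{(j)})$ with adjacent corner circles. The essential claim is that these modulus values together encode $(n,p)$: the scale factor $n$ controls the self-similar renewal equation satisfied by the modulus under the subdivision rule defining $F_{n,p}$, while $p$ enters through the relative position of the corner circles inside $C_0$, so two independent modulus values suffice to recover both integers. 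As a consistency check, Theorem \ref{thm:rigidity} can be used to argue that once $(n,p)=(n',p')$ is known the map $f$ must already be a Euclidean isometry; thus the entire obstruction to equivalence lies in matching the modulus data.

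The main obstacle is the quantitative estimation of the carpet moduli. Exact values are unobtainable, so the argument must combine matching upper and lower bounds. Upper estimates use explicit admissible weights modelled on the Euclidean conformal moduli of the rectangles separating the relevant circles, concentrated near the boundaries of the removed squares. Lower estimates come from path-counting over the self-similar structure of $F_{n,p}$ at each level $k$, weighted by $n^{-k}$. Aligning these bounds finely enough to separate different pairs $(n,p)$ is the delicate heart of the argument, extending the single-parameter modulus estimate of \cite{BM} distinguishing $S_p$ from $S_q$ to discriminate the additional parameter that encodes the offset of the corner squares from the corners of $Q^{(0)}_{n,p}$.
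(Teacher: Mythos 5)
Your first step (showing $f$ sends the outer circle to the outer circle and permutes the four first-level corner circles onto their counterparts) matches the paper, although your proposed topological characterization of the corner circles is wrong as stated: by the definition of a carpet the peripheral circles are pairwise disjoint, and since $p\geq 1$ the closures of $M_1,\dots,M_4$ do not meet $O$ at all, let alone along nontrivial arcs. The correct characterization, which you also gesture at, is the extremal one from Lemma \ref{interchange}: $\{O,M_i\}$ are exactly the non-adjacent pairs maximizing the carpet modulus, combined with Theorem \ref{thm:rigidity} to pin down $O$ itself.

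The genuine gap is in your second step. You propose to recover $(n,p)$ from the numerical values of two carpet moduli, and you concede that the required matching upper and lower bounds are ``the delicate heart of the argument'' and that exact values are unobtainable. That heart is missing: nothing in the paper, nor in \cite{BM}, gives estimates on $\mathrm{mod}_{F_{n,p}}(\Gamma(O,M_1))$ sharp enough to show this quantity (even together with a second modulus) is an injective function of $(n,p)$, and it is not clear such an argument can be carried out. Your attribution of this strategy to \cite{BM} is also inaccurate: Bonk--Merenkov do not distinguish $S_p$ from $S_q$ by comparing modulus values. The paper instead sidesteps all quantitative estimation by invoking the square-carpet rigidity theorem (Theorem \ref{thm_square_rigidity}): after using Theorem \ref{thm:rigidity} to normalize $f$ so that it is orientation-preserving, sends $O\mapsto O'$, $M_1\mapsto M_1'$, commutes with the diagonal reflections, and fixes the corners of the unit square, that theorem forces $f=\mathrm{id}$ and $F_{n,p}=F_{n',p'}$ as subsets of the plane, whence $(n,p)=(n',p')$; a second case where $f$ swaps $(0,0)$ and $(1,1)$ is excluded by the same rigidity. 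So Theorem \ref{thm:rigidity} is not merely the ``consistency check'' you describe but an essential ingredient, and the reduction to Theorem \ref{thm_square_rigidity} is the idea your proposal lacks.
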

\subsection{Idea of the proofs}
The main tools to prove the above theorems are the \emph{carpet modulus} and the \emph{weak tangent}, both
of which were
investigated in \cite{BM}. Our arguments follow the same outline as \cite{BM}.

We will first concentrate on carpet modulus of the families of curves
connecting the boundary of the annulus domains bounded by pairs of distinct peripheral circles of $F_{n,p}$.
The extremal mass distribution of such a carpet modulus exists and is unique (Proposition \ref{extremal_mass}).
This, together with the auxiliary results in Section \ref{sec:auxiliary}, allows us to show that (see Section 4)
any quasisymmetric self-map $f$ of $F_{n,p}$ should preserves the set $\{O, M_1, M_2,M_3,M_4\}$,
where $O$ is the boundary of the unit square and $M_1, M_2,M_3,M_4$ are the boundary of the first four squares
removed from the unit square.

It is more difficult to see that $f$ should maps $O$ to $O$.
To show this, we first study the weak tangents of the carpets (this is our main work on Section \ref{sec_weak_tangent_spaces}).
In Section \ref{sec_proof_of_theorems}, we prove that $f(O)=O$ by counting the orbit of a corner of $O$
or $M_i$ under the group $\mathrm{QS}(F_{n,p})$.

\subsection{Remark}
Our arguments in this paper apply to a  more general class of Sierpi\'nski Carpets $F_{n,p,r},r\geq 1,p\geq1,n\geq5,1\leq p+r<\frac{n}{2}$.
Let $Q_{n,p,r}^{(0)}=[0,1]\times[0,1]$.
Subdivide $Q_{n,p,r}^{(0)}$ into $n^2$ subsquares and remove the interior of four bigger subsquares with side-length $r/n$
and is of distance $\sqrt{2}p/n$ to one of the four corner points of $Q_{n,p,r}^{(0)}$. So the resulting set $Q_{n,p,r}^{(1)}$
has $(n^2-4r^2)$ subsquares with side-length $1/n$. Repeating the operation to the subsquares, we obtain $Q_{n,p,r}^{(2)}$.
Inductively, we have $Q_{n,p,r}^{(k)}$.

Then the carpet $F_{n,p,r}=\bigcap_{k\geq0}Q_{n,p,r}^{(k)}$.
See Figure \ref{fig:F_{7,1,2}}.
Note that $F_{n,p}=F_{n,p,1}$.

Similarly, $F_{n,p,r}$ is not quasisymmetrically equivalent to $S_m, m\geq3$ odd and $QS(F_{n,p,r})$ is the isometric group.
Moreover, $F_{n,p,r}$ and $F_{n',r',p'}$ are quasisymmetrically equivalent if and only if $(n,p,r)=(n',p',r')$.
Since the proof of the above conclusions are of no essential difference from that of $F_{n,p}$, we shall omit it.

\begin{figure}[!hbp]
\centering
\includegraphics[width=0.45\linewidth]{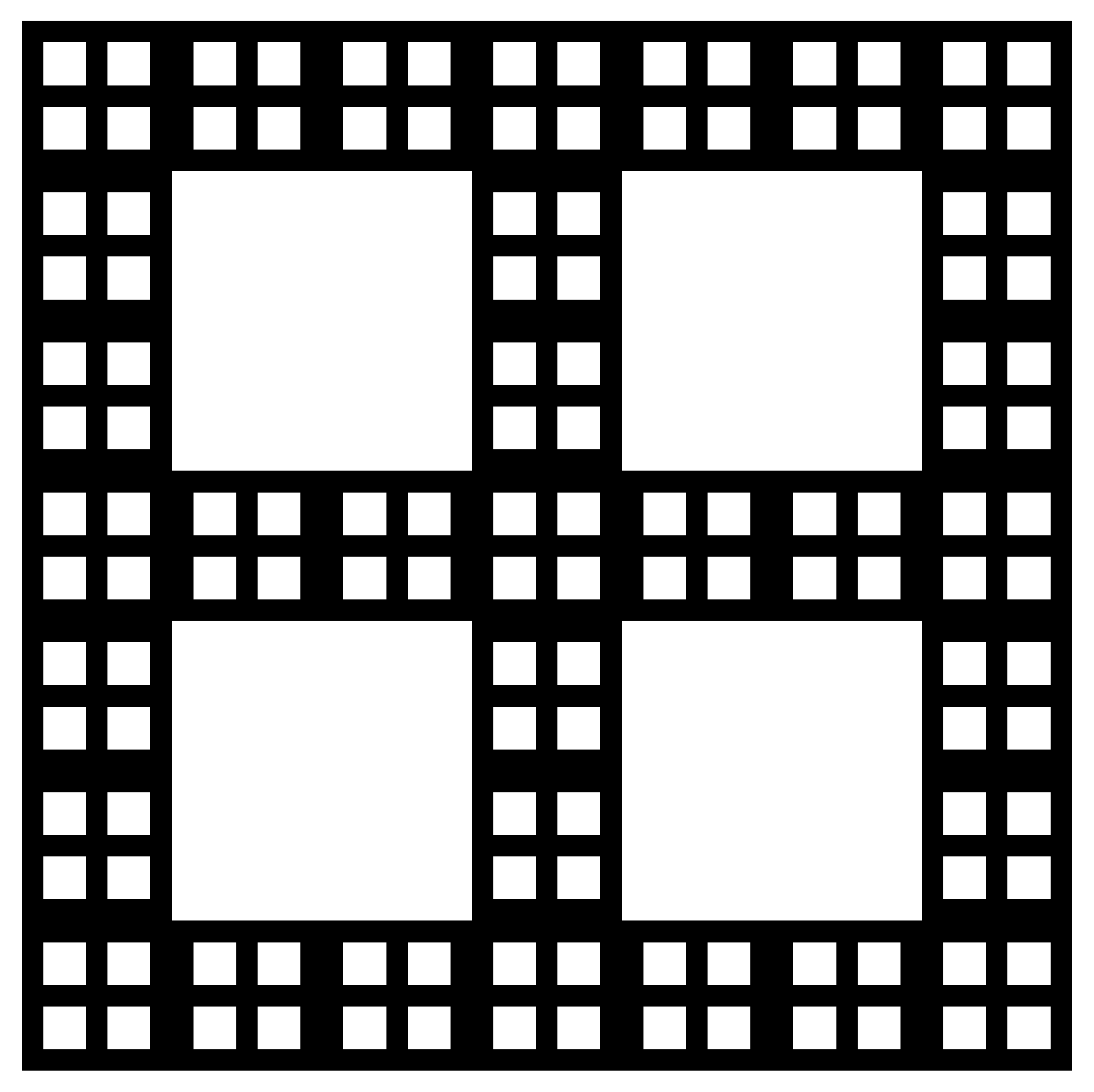}
\caption{The carpet $F_{7,1,2}$.}
\label{fig:F_{7,1,2}}
\end{figure}
\section{Carpet modulus}\label{sec:carpet_modulus}

In this section, we shall recall the definitions of conformal modulus and carpet modulus. The carpet modulus was
 introduced by Bonk-Merenkov \cite{BM} as a quasisymmetric invariant.
 There are several important properties of the carpet modulus that will be used in the rest of
our paper. In many cases, we will neglect the proof and refer to  \cite{BM} instead.

\subsection{Conformal modulus} A \emph{path} $\gamma$ in a metric space $X$ is a continuous map $\gamma: I\rightarrow X$ of a finite interval $I$.
Without cause of confusion, we shall identified the map with its image
$\gamma(I)$ and denote a path by $\gamma$.
We say that $\gamma$ is \emph{open} if $I=(a, b)$. The limits $\lim_{t\rightarrow a}\gamma(t)$ and
$\lim_{t\rightarrow b}\gamma(t)$, if they exist, are called the \emph{end points of $\gamma$}. If $A, B\subseteq X$, then we say that \emph{$\gamma$
connects $A$ and $B$} if $\gamma$ has endpoints such that one of them lies in $A$ and the other lies in $B$. If $I=[a,b]$ is a closed interval, then the
length of $\gamma: I\rightarrow X$ is defined by
\[\textup{length}(\gamma):=\textup{sup}\sum_{i=1}^{n}|\gamma(t_{i})-\gamma(t_{i-1})|\]
where the supremum is taken over all finite sequences $a=t_{0}\leq t_{1}\leq t_{2}\leq \cdots \leq t_{n}=b$. If $I$ is not closed, then
we set $$\textup{length}(\gamma):=\sup_J {\textup{length}(\gamma|J)}, $$
where $J$ is taken over all closed subintervals
of $I$ and $\gamma|_J$ denotes the restriction of $\gamma$ on $J$. We call $\gamma$ \emph{rectifiable} if its length is finite.
Similarly, a path $\gamma: I\rightarrow X$ is \emph{locally rectifiable} if its restriction to each closed subinterval is rectifiable.
Any rectifiable path $\gamma: I\rightarrow X$ has a unique extension $\overline{\gamma}$ to the closure $\overline{I}$ of $I$.

Let $\Gamma$ be a family of paths in $\mathbb{S}^2$. Let $\sigma$ be the spherical measure and $ds$ be the spherical line element
on $\mathbb{S}^2$ induced by the spherical metric (the Riemannian metric on $\mathbb{S}^2$ of constant curvature $1$).
The \emph{conformal modulus} of $\Gamma$ is defined as
$$\text{mod}(\Gamma):=\text{inf}\int_{\mathbb{S}^2}\rho^2d\sigma\ ,$$
where the infimum is taken over all nonnegative Borel functions $\rho:\mathbb{S}^2\rightarrow[0,\infty]$ satisfying
\begin{equation*}\label{equ:admissible}
\int_\gamma\rho ds\geq1
\end{equation*}
for all locally rectifiable path $\gamma\in\Gamma$. Functions $\rho$ satisfying ($\ref{equ:admissible}$) for all locally rectifiable path $\gamma\in\Gamma$ are called \emph{admissible}.

It is easy to show that (see \cite{Ahl})

\begin{equation}\label{equ:modulus1}
\text{mod}(\Gamma_1)\leq\text{mod}(\Gamma_2),
\end{equation}
if $\Gamma_1\subseteq\Gamma_2$ and
\begin{equation}\label{equ:modulus2}
\text{mod}(\bigcup_{i=1}^\infty\Gamma_i)\leq\sum_{i=1}^{\infty}\text{mod}(\Gamma_i).
\end{equation}
Moreover, if $\Gamma_1$ and
$\Gamma_2$ are two families of paths such that each path $\gamma$ in $\Gamma_1$ contains a subpath $\gamma'\in \Gamma_2$, then
\begin{equation}\label{equ:modulus3}
\text{mod}(\Gamma_1)\leq\text{mod}(\Gamma_2)
\end{equation}

If $f:\Omega\rightarrow\Omega'$ is a continuous map between domains $\Omega$ and $\Omega'$ in $\mathbb{S}^2$ and $\Gamma$ is a
family of paths contained in $\Omega$, then we denote by $f(\Gamma)=\{f\circ\gamma \ | \ \gamma\in\Omega\}$.

If $f:\Omega\rightarrow\Omega'$ is a conformal map between regions
$\Omega$, $\Omega'\subseteq\mathbb{S}^2$ and $\Gamma$ is a family of paths in $\Omega$, then mod($\Gamma$)=mod($f(\Gamma)$).
This is the fundamental property of modulus: conformal maps do not change the conformal modulus of a family of paths.

In this paper,
we shall adopt the
metric definition of quasiconformal maps (\cite{HK}, Definition 1.2) and allow them to be orientation-reversing.
Suppose that $f: X\to Y$ is a homeomorphism between two metric spaces $X$ and $Y$.
$f$ is \emph{quasiconformal} if there is a constant $H\geq 1$, s.t. $\forall x\in X$,
$$\lim\sup_{r\to 0^+}\frac{\max\{d(f(x),f(y)):d(x,y)\leq r\}}{\min\{d(f(x),f(y)):d(x,y)\geq r\}}\leq H.$$

Quasiconformal maps distort the conformal modulus of path families in a controlled way.
Let $\Omega$ and $\Omega'$ be regions in $\mathbb{S}^2$ and let $\Gamma$ be a family of paths in $\Omega$. Suppose that
 $f:\Omega\rightarrow\Omega'$
is quasiconformal map. Then
\begin{equation}\label{equ:qc}
\frac{1}{K}\textup{mod}(\Gamma)\leq\textup{mod}(f(\Gamma))\leq K\textup{mod}(\Gamma),
\end{equation}
where $K\geq 1$ depends on the dilatation of $f$ .

From
 $(\ref{equ:qc})$, a quasiconformal map preserves the modulus
of a path family up to a fixed multiplicative constant. So if $\Gamma_0\subseteq\Gamma$ and $\textup{mod}(\Gamma_
0)=0$, then $\textup{mod}(f(\Gamma_0))=0$.

\subsection{Carpet modulus}
If a certain property for paths in $\Gamma$ holds for all paths outside an exceptional family $\Gamma_0\subseteq\Gamma$
with $\mathrm{mod}(\Gamma_0)=0$, then we say that it holds for \emph{almost every path in $\Gamma$}.

Let $S=\mathbb{S}^2\backslash\bigcup_{i=1}^{\infty}D_i$ be a carpet with $C_i=\partial D_i$, and let $\Gamma$ be a family
of paths in $\mathbb{S}^2$.
A \emph{mass distribution} $\rho$ is a function that assigns to each $C_i$ a non-negative number $\rho(C_i)$.

The \emph{carpet modulus} of $\Gamma$ with respect to $S$ is defined as
$$\textup{mod}_S(\Gamma)=\mathop{\textup{inf}}^{}_{\rho}\textup{ }\sum_{i}\rho(C_i)^2,$$
where the infimum is taken over all
\emph{admissible} mass distribution $\rho$, that is, mass distribution $\rho$ satisfies
\begin{equation*}\label{admissible_condition}
\mathop{\sum}^{}_{\gamma\bigcap C_i\neq\emptyset}\rho(C_i)\geq1
\end{equation*}
for all most every path in $\Gamma$.

It is straightforward to check that the carpet modulus is momotone and countably subadditive, the same properties as conformal modulus in
(\ref{equ:modulus1}), (\ref{equ:modulus2}) and (\ref{equ:modulus3}). An crucial property of carpet modulus
is its invariance under quasiconformal maps.

\begin{lem}[\cite{BM}]\label{lem:invariant}
Let $D, \widetilde{D}\subset \mathbb{S}^2$ be regions and $f:D\to D$ be a quasiconformal map.
Let $S\subseteq D$ be a carpet and $\Gamma$ be a family of paths such that $\gamma\subset D$ for each
$\gamma\in \Gamma$.  Then $$\textup{mod}_{f(S)}(f(\Gamma))=\textup{mod}_S(\Gamma).$$
\end{lem}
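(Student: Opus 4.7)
The plan is to push forward and pull back admissible mass distributions along $f$, exploiting the fact that $f$ being a homeomorphism sets up a natural bijection between the peripheral circles of $S$ and those of $f(S)$, while quasiconformality handles the exceptional path families.

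First I would observe that since $f\colon D\to\widetilde D$ is a homeomorphism, the complementary Jordan domains $D_i$ of $S$ are carried to the complementary Jordan domains of $f(S)$, and the peripheral circles $C_i=\partial D_i$ are mapped bijectively onto the peripheral circles $\widetilde C_i:=f(C_i)=\partial f(D_i)$ of $f(S)$. This lets me transfer a mass distribution $\rho$ on $\{C_i\}$ to a mass distribution $\widetilde\rho$ on $\{\widetilde C_i\}$ via $\widetilde\rho(\widetilde C_i):=\rho(C_i)$, which in particular gives the identity $\sum_i \widetilde\rho(\widetilde C_i)^2=\sum_i \rho(C_i)^2$, so the competing quantities in the two carpet moduli match term by term.

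Next I would verify that if $\rho$ is admissible for $\Gamma$ with respect to $S$, then $\widetilde\rho$ is admissible for $f(\Gamma)$ with respect to $f(S)$. Because $f$ is a bijection, one has $f(\gamma)\cap\widetilde C_i\ne\emptyset$ if and only if $\gamma\cap C_i\ne\emptyset$, so the admissibility sum for $\widetilde\rho$ along $f(\gamma)$ equals the admissibility sum for $\rho$ along $\gamma$. Let $\Gamma_0\subseteq\Gamma$ denote the exceptional family on which the admissibility of $\rho$ may fail; by assumption $\mathrm{mod}(\Gamma_0)=0$. Here the main technical point enters: I need $\mathrm{mod}(f(\Gamma_0))=0$ so that the failure of admissibility for $\widetilde\rho$ is confined to an exceptional family in $f(\Gamma)$. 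This is exactly what the quasiconformal distortion inequality $(\ref{equ:qc})$ gives, since it implies $\mathrm{mod}(f(\Gamma_0))\le K\,\mathrm{mod}(\Gamma_0)=0$. Consequently $\widetilde\rho$ is admissible for almost every path in $f(\Gamma)$, so $\mathrm{mod}_{f(S)}(f(\Gamma))\le\sum_i \rho(C_i)^2$, and taking the infimum over $\rho$ yields $\mathrm{mod}_{f(S)}(f(\Gamma))\le\mathrm{mod}_S(\Gamma)$.

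Finally, the reverse inequality is obtained by applying the same argument to the quasiconformal homeomorphism $f^{-1}\colon\widetilde D\to D$, which carries $f(S)$ back to $S$ and $f(\Gamma)$ back to $\Gamma$; combining the two gives the asserted equality. The only delicate step is the preservation of null families under $f$, which is where the quasiconformal hypothesis is genuinely used; all other steps are essentially bookkeeping that relies on $f$ being a homeomorphism. I would expect the main obstacle in a fully rigorous write-up to be verifying this exceptional-set step carefully, in particular making sure the image under $f$ of a set of paths with conformal modulus zero is still a set of paths with conformal modulus zero, rather than some measurability pathology of the push-forward.
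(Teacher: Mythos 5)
Your proof is correct and is essentially the standard argument for this lemma: the paper itself states it without proof, deferring to \cite{BM}, where the proof proceeds exactly as you describe --- transferring mass distributions along the bijection $C_i\mapsto f(C_i)$ of peripheral circles, invoking the quasiconformal distortion inequality (\ref{equ:qc}) only to guarantee that the exceptional family $f(\Gamma_0)$ still has conformal modulus zero, and then applying the same reasoning to $f^{-1}$ for the reverse inequality. You also correctly identified the one genuinely non-bookkeeping step (preservation of null path families), so nothing is missing.
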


\subsection{Carpet modulus with respect to a group}
We also need the notion of \emph{carpet modulus with respect to a group}.

Let $S=\mathbb{S}^2\setminus\bigcup_{i\in\mathbb{N}}D_i$ be a carpet and $C_i=\partial D_i$.
Let $G$ be a group of homeomorphisms of $S$. If $g\in G$ and $C\subseteq S$ is a peripheral circle of $S$,
 then $g(C)$ is also a peripheral circle of $S$. Let $\mathcal{O}=\{g(C) : g\in G\}$  be the orbit of $C$ under
 the action of $G$.

Let $\Gamma$ be a familly of paths in $\mathbb{S}^2$.
A admissible $G$-invariant mass distribution $\rho: \{C_i\}\to [0,+\infty]$ is a mass distribution such that

\begin{enumerate}
\item $\rho(g(C))=\rho(C)$ for all $g\in G$ and all peripheral circles $C$ of $S$;
\item  almost every path $\gamma$ in $\Gamma$  satisfies
$$\sum_{\gamma\bigcap C_i\neq\emptyset}\rho(C_i)\geq 1.$$
\end{enumerate}
The \emph{carpet modulus} $\textup{mod}_{S/G}(\Gamma)$ with respect to the action of $G$ on $S$ is defined as $$\textup{mod}_{S/G}(\Gamma):=\mathop{\textup{inf}}^{}_{\rho}\sum_{\mathcal{O}}\rho(\mathcal{O})^2,$$ where the infimum is taken over all admissible $G$-invariant mass distributions.
In the above definition, $\rho(\mathcal{O})$ is defined by $\rho(C)$ for any
$C\in \mathcal{O}$. Since $\rho$ is $G$-invariant, $\rho(\mathcal{O})$ is well-defined.
Note that each orbit contributions with exactly one term to the sum $\sum_{\mathcal{O}}\rho(\mathcal{O})^2$.

\begin{lem}[\cite{BM}]\label{lem:group carpet}
Let $D$ be a region in $\mathbb{S}^2$ and $S$ be a carpet contained in $D$. Let $G$ be a group of homeomorphisms on $S$. Suppose that
 $\Gamma$ is
a family of paths with $\gamma\subseteq D$ for each $\gamma\in\Gamma$ and $f:D\rightarrow\widetilde{D}$ a quasiconformal map onto
another region $\widetilde{D}\subseteq\mathbb{S}^2$. We denote $\widetilde{S}=f(S)$, $\widetilde{\Gamma}=f(\Gamma)$ and
$\widetilde{G}=(f|_S)\circ G\circ(f|_S)^{-1}$, then $$\textup{mod}_{\widetilde{S}/\widetilde{G}}(\widetilde{\Gamma})
=\textup{mod}_{S/G}(\Gamma).$$
\end{lem}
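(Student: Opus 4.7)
The plan is to mimic the proof of Lemma \ref{lem:invariant} and simply check that the extra group-equivariance structure is respected by the quasiconformal map $f$. The key observation is that pushing a mass distribution forward via $f$ sets up a bijection between admissible $G$-invariant mass distributions on $(S,\Gamma)$ and admissible $\widetilde{G}$-invariant mass distributions on $(\widetilde{S},\widetilde{\Gamma})$ that preserves the relevant sum of squares, and the only subtlety is the phrase ``almost every path.''

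First I would set up the natural correspondence on peripheral circles. Since $f:D\to\widetilde{D}$ is a homeomorphism, the peripheral circles of $\widetilde{S}=f(S)$ are exactly the sets $\widetilde{C}_i:=f(C_i)$. Given an admissible $G$-invariant mass distribution $\rho$ for $(S,\Gamma)$, define a mass distribution on $\widetilde{S}$ by $\widetilde{\rho}(\widetilde{C}_i):=\rho(C_i)$. To see $\widetilde{G}$-invariance, note that for $\widetilde{g}=(f|_S)g(f|_S)^{-1}\in\widetilde{G}$ one has $\widetilde{g}(\widetilde{C}_i)=f(g(C_i))$, so
\[
\widetilde{\rho}(\widetilde{g}(\widetilde{C}_i))=\rho(g(C_i))=\rho(C_i)=\widetilde{\rho}(\widetilde{C}_i).
\]
Consequently the $\widetilde{G}$-orbits on peripheral circles of $\widetilde{S}$ are exactly the images $\widetilde{\mathcal{O}}=f(\mathcal{O})$ of $G$-orbits $\mathcal{O}$, and $\widetilde{\rho}(\widetilde{\mathcal{O}})=\rho(\mathcal{O})$. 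Hence $\sum_{\widetilde{\mathcal{O}}}\widetilde{\rho}(\widetilde{\mathcal{O}})^2=\sum_{\mathcal{O}}\rho(\mathcal{O})^2$.

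Next I would verify admissibility. For any path $\gamma\subseteq D$ we have $\gamma\cap C_i\neq\emptyset$ if and only if $f(\gamma)\cap\widetilde{C}_i\neq\emptyset$, so pointwise the admissibility inequality for $\gamma\in\Gamma$ translates into the admissibility inequality for $\widetilde{\gamma}=f(\gamma)\in\widetilde{\Gamma}$. The only delicate point is that admissibility of $\rho$ only requires the inequality on $\Gamma\setminus\Gamma_0$ for some $\Gamma_0\subseteq\Gamma$ with $\textup{mod}(\Gamma_0)=0$. But this is precisely where the quasiconformal distortion estimate (\ref{equ:qc}) applied to $f$ (which is quasiconformal as a map between subdomains of $\mathbb{S}^2$) gives $\textup{mod}(f(\Gamma_0))=0$, so the exceptional set is sent to an exceptional set and $\widetilde{\rho}$ is admissible for $\widetilde{\Gamma}$.

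Taking the infimum yields $\textup{mod}_{\widetilde{S}/\widetilde{G}}(\widetilde{\Gamma})\leq\textup{mod}_{S/G}(\Gamma)$. The reverse inequality follows by applying the same construction to the quasiconformal map $f^{-1}:\widetilde{D}\to D$, using that $(f^{-1}|_{\widetilde{S}})\circ\widetilde{G}\circ(f^{-1}|_{\widetilde{S}})^{-1}=G$. The main (and only) potential obstacle is the ``almost every'' clause, but this is handled cleanly by (\ref{equ:qc}), exactly as in the proof of Lemma \ref{lem:invariant} in \cite{BM}; otherwise the argument is a purely formal verification that the orbit-structure is preserved.
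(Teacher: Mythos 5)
The paper does not prove this lemma; it is quoted from \cite{BM} and the authors explicitly defer to that reference. Your argument is correct and is essentially the standard proof given there: push mass distributions forward along the bijection $C_i\mapsto f(C_i)$ of peripheral circles, observe that incidence $\gamma\cap C_i\neq\emptyset$ and the orbit structure (hence the sum $\sum_{\mathcal{O}}\rho(\mathcal{O})^2$) are purely topological and preserved by the homeomorphism $f$, use quasiconformality only to ensure that exceptional families of zero conformal modulus are carried to exceptional families, and obtain the reverse inequality by symmetry via $f^{-1}$. No gaps.
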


\begin{lem}\label{cyclic}
Let $S$ be a carpet in $\mathbb{S}^2$ and $\Psi: \mathbb{S}^2\rightarrow\mathbb{S}^2$ be a quasiconformal map with $\Psi(S)=S$, $\psi:=\Psi|_S$. Assume that $\Gamma$ is a $\Psi$-invariant path family in $\mathbb{S}^2$ such that for every peripheral circle $C$ of $S$ that meets some path in $\Gamma$ we have $\psi^n(C)\neq C$ for all $n\in\mathbb{Z}$. Then $\textup{mod}_{S/\langle\psi^k\rangle}(\Gamma)=k\textup{mod}_{S/\langle\psi\rangle}(\Gamma)$ for every $k\in\mathbb{N}$.
\end{lem}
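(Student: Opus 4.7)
The plan is to prove the two opposite inequalities separately, exploiting the hypothesis $\psi^n(C) \neq C$ (for all $n \in \mathbb{Z}\setminus\{0\}$): under it, each $\langle\psi\rangle$-orbit of a \emph{relevant} peripheral circle (one meeting some $\gamma\in\Gamma$) is a free bi-infinite orbit, and therefore splits into exactly $k$ disjoint $\langle\psi^k\rangle$-orbits, with representatives $C,\psi(C),\ldots,\psi^{k-1}(C)$. Since $\Psi(\Gamma)=\Gamma$, the family of relevant peripheral circles is $\psi$-invariant, so when computing either modulus one may restrict to mass distributions supported on relevant circles without loss of generality.

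For the ``$\leq$'' direction, I would take any $\langle\psi\rangle$-invariant admissible $\rho$, observe that it is automatically $\langle\psi^k\rangle$-invariant and admissible, and note that each $\langle\psi\rangle$-orbit $\mathcal{O}$ contributes exactly $k$ equal summands (one per sub-orbit) to the $\langle\psi^k\rangle$-sum, giving
$$\sum_{\mathcal{O}' \in S/\langle\psi^k\rangle} \rho(\mathcal{O}')^2 \;=\; k \sum_{\mathcal{O} \in S/\langle\psi\rangle} \rho(\mathcal{O})^2.$$
Taking the infimum over $\rho$ yields $\textup{mod}_{S/\langle\psi^k\rangle}(\Gamma) \leq k\,\textup{mod}_{S/\langle\psi\rangle}(\Gamma)$.

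For the reverse inequality, given a $\langle\psi^k\rangle$-invariant admissible $\tilde{\rho}$, I would symmetrize it by
$$\rho(C) \;:=\; \frac{1}{k}\sum_{j=0}^{k-1} \tilde{\rho}(\psi^j(C)),$$
whose $\langle\psi\rangle$-invariance follows from $\tilde{\rho}\circ\psi^k=\tilde{\rho}$. To verify admissibility, let $E\subset\Gamma$ be the exceptional family for $\tilde{\rho}$ with $\textup{mod}(E)=0$; by the quasiconformal invariance (\ref{equ:qc}) applied to each $\Psi^{-j}$, the pullback $\Psi^{-j}(E)$ has conformal modulus zero, hence so does $E' := \bigcup_{j=0}^{k-1}\Psi^{-j}(E)$ by (\ref{equ:modulus2}). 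For $\gamma\in\Gamma\setminus E'$ and each $j$, $\Psi^j(\gamma)\in\Gamma\setminus E$ satisfies $\sum_{\Psi^j(\gamma)\cap C_i\neq\emptyset}\tilde{\rho}(C_i)\geq 1$; reindexing via $C_i=\psi^j(C_l)$ turns this into $\sum_{\gamma\cap C_l\neq\emptyset}\tilde{\rho}(\psi^j(C_l))\geq 1$, and averaging over $j$ yields $\sum_{\gamma\cap C_l\neq\emptyset}\rho(C_l)\geq 1$. Finally, the Cauchy--Schwarz (AM--QM) bound $(\tfrac{1}{k}\sum_j a_j)^2\leq\tfrac{1}{k}\sum_j a_j^2$ gives, for each $\langle\psi\rangle$-orbit $\mathcal{O}$ with sub-orbits $\mathcal{O}'_0,\ldots,\mathcal{O}'_{k-1}$,
$$\rho(\mathcal{O})^2 \;\leq\; \frac{1}{k}\sum_{j=0}^{k-1} \tilde{\rho}(\mathcal{O}'_j)^2.$$
Summing over $\mathcal{O}$ collapses the right-hand double sum to $\tfrac{1}{k}\sum_{\mathcal{O}'}\tilde{\rho}(\mathcal{O}')^2$, and taking the infimum over $\tilde{\rho}$ delivers $k\,\textup{mod}_{S/\langle\psi\rangle}(\Gamma) \leq \textup{mod}_{S/\langle\psi^k\rangle}(\Gamma)$.

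The main obstacle will be the verification of admissibility for the symmetrized $\rho$: because admissibility is required only for \emph{almost every} path, one must handle the exceptional families carefully, using quasiconformal invariance of zero conformal modulus to enlarge $E$ to the still-negligible $E' = \bigcup_j \Psi^{-j}(E)$ on which the averaged inequality can be read off pathwise. Once that technicality is out of the way, the remainder of the proof is the clean orbit-bookkeeping above, with the quadratic-versus-arithmetic mean inequality distributing the factor $k$ in precisely the way needed for both inequalities to match.
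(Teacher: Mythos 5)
Your proof is correct. Note that the paper itself gives no proof of this lemma --- it simply cites it as Lemma 3.3 of Bonk--Merenkov \cite{BM} --- and your argument (free orbits splitting into $k$ sub-orbits for one inequality, averaging $\tilde{\rho}$ over $\psi^0,\dots,\psi^{k-1}$ plus the AM--QM inequality for the other, with the exceptional families controlled via quasiconformal invariance of zero conformal modulus) is essentially the argument given in \cite{BM}.
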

This is (\cite{BM}, Lemma 3.3). In this Lemma, $\langle\psi\rangle$ denotes the cyclic group of homeomorphisms on $S$ generated by $\psi$, and $\Gamma$ is called \emph{$\Psi$-invariant} if $\Psi(\Gamma)=\Gamma$. This lemma gives a precise relationship between the carpet modulus with respect to a cyclic group and its subgroups.

\subsection{Existence of extremal mass distribution}
Let $S=\mathbb{S}^2\setminus \{D_i\}, C_i=\partial D_i$ be a carpet and $\Gamma$ be a family of paths on $\mathbb{S}^2$.
An admissible mass distribution $\rho$ for a carpet modulus $\mathrm{mod}_S(\Gamma)$ is called
\emph{extremal} if $\mathrm{mod}_S(\Gamma)$ is obtained by $\rho$:
$$\mathrm{mass}(\rho)=\sum_{i}\rho(C_i)^2=\mathrm{mod}_S(\Gamma).$$
Similarly, an $G$-invariant mass distribution that obtains $\textup{mod}_{S/G}(\Gamma)$ is also called \emph{extremal}.

A criterion for the existence of an extremal mass distribution for carpet modulus (with respect to the group)
is given by \cite{BM}. Recall that the peripheral circles $\{C_i\}$ are \emph{uniform quasicircles}
if there exists a homeomorphism $\eta:[0,\infty)\to [0,\infty)$ such that every $C_i$ is the image
of an $\eta$-quasisymmetric map of the unit circle.

\begin{prop}\label{extremal}
Let $S$ be a carpet in $\mathbb{S}^2$ whose peripheral circles are uniform quasicircles, and let $\Gamma$ be an arbitrary path family in $\mathbb{S}^2$ with $\textup{mod}_S(\Gamma)<+\infty$. Then the extremal mass distribution for $\textup{mod}_S(\Gamma)$ exists and is unique.
\end{prop}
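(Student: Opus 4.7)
The plan is to cast the carpet modulus problem as a strictly convex minimization in the Hilbert space $H=\ell^2(\{C_i\})$ and extract the extremal via weak compactness, using a Fuglede-type selection to preserve admissibility in the limit. Identify a mass distribution $\rho$ with the sequence $(\rho(C_i))_{i}$, so that the admissibility condition ``$\sum_{\gamma\cap C_i\neq\emptyset}\rho(C_i)\geq 1$ for almost every $\gamma\in\Gamma$'' defines a subset $\mathcal{A}\subset H$. This $\mathcal{A}$ is convex: a convex combination of two admissible distributions is admissible outside the union of the two exceptional path families, which has conformal modulus zero by countable subadditivity. Since $\textup{mod}_S(\Gamma)<\infty$, we have $\mathcal{A}\neq\emptyset$, and $\rho\mapsto\|\rho\|_H^2$ is strictly convex.

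Pick a minimizing sequence $(\rho_n)\subset\mathcal{A}$ with $\|\rho_n\|_H^2\to M:=\textup{mod}_S(\Gamma)$. By boundedness in $H$, pass to a weakly convergent subsequence $\rho_n\rightharpoonup\rho^\ast$. Mazur's lemma furnishes convex combinations $\tilde\rho_n$ of the tails $(\rho_k)_{k\geq n}$ with $\tilde\rho_n\to\rho^\ast$ strongly in $H$; each $\tilde\rho_n$ still lies in $\mathcal{A}$. After passing to a further subsequence with $\sum_n\|\tilde\rho_n-\rho^\ast\|_H<\infty$, I would apply a Fuglede-type lemma to conclude that
$$\sum_{\gamma\cap C_i\neq\emptyset}|\tilde\rho_n(C_i)-\rho^\ast(C_i)|\longrightarrow 0$$
for all $\gamma$ outside a family of conformal modulus zero. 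The uniform quasicircle hypothesis is decisive here: if every $C_i$ is an $\eta$-quasicircle, then each $C_i$ is contained in a neighborhood $U_i$ with $\textup{diam}(U_i)\asymp\textup{diam}(C_i)$ and spherical area $\asymp\textup{diam}(C_i)^2$, so that the auxiliary Borel density
$$h_n(x)=\sum_i\frac{|\tilde\rho_n(C_i)-\rho^\ast(C_i)|}{\textup{diam}(C_i)}\,\mathbf{1}_{U_i}(x)$$
satisfies $\int_\gamma h_n\,ds\gtrsim\sum_{\gamma\cap C_i\neq\emptyset}|\tilde\rho_n(C_i)-\rho^\ast(C_i)|$ for any path $\gamma$ meeting these circles, while $\|h_n\|_{L^2(\mathbb{S}^2)}\lesssim\|\tilde\rho_n-\rho^\ast\|_H$. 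The classical Fuglede lemma then delivers the null family.

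For $\gamma$ outside the exceptional family, passing to the limit in $\sum_{\gamma\cap C_i\neq\emptyset}\tilde\rho_n(C_i)\geq 1$ yields the same inequality for $\rho^\ast$, so $\rho^\ast\in\mathcal{A}$. Lower semicontinuity of $\|\cdot\|_H^2$ under weak convergence gives $\|\rho^\ast\|_H^2\leq M$, so $\rho^\ast$ is extremal. Uniqueness is then automatic from strict convexity: if $\rho_1,\rho_2$ were two extremals, $(\rho_1+\rho_2)/2\in\mathcal{A}$ would have mass $\leq(\|\rho_1\|_H^2+\|\rho_2\|_H^2)/2=M$, with equality only if $\rho_1(C_i)=\rho_2(C_i)$ for every $i$. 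The main obstacle is precisely the Fuglede step: moving from $\ell^2$-convergence of the mass distributions to pointwise (a.e.\ path) convergence of the admissibility sums. The uniform quasicircle assumption is indispensable there, because without a geometric control of the shape and size of each $C_i$ one cannot bound the $L^2$-energy of the comparison density $h_n$ by $\|\tilde\rho_n-\rho^\ast\|_H$, and the weak limit $\rho^\ast$ could fail to be admissible.
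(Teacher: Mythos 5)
The paper does not prove this proposition at all: it is quoted verbatim from Bonk--Merenkov (their Proposition~2.4), with only the remark that uniqueness follows from convexity. Your architecture --- identify mass distributions with points of $\ell^2(\{C_i\})$, observe that the admissible set is convex and nonempty, take a minimizing sequence, pass to a weak limit, upgrade to strong convergence via Mazur, recover admissibility of the limit through a Fuglede-type lemma, and conclude by lower semicontinuity and strict convexity --- is exactly the architecture of the proof in the cited source, and everything except the Fuglede step is correct and complete as you state it.

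The gap is in the Fuglede step, and it is a genuine one, because both of the estimates you assert for $h_n=\sum_i \frac{|a_i|}{\operatorname{diam}(C_i)}\mathbf{1}_{U_i}$ (with $a_i=\tilde\rho_n(C_i)-\rho^\ast(C_i)$) fail as stated. First, the lower bound $\int_\gamma h_n\,ds\gtrsim\sum_{\gamma\cap C_i\neq\emptyset}|a_i|$ is false for a path $\gamma$ that merely touches a circle $C_i$ with $\operatorname{diam}(C_i)\gg\operatorname{diam}(\gamma)$: the left side can be arbitrarily small while the right side is $|a_i|$. The estimate only holds for the circles with $\operatorname{diam}(C_i)\lesssim\operatorname{diam}(\gamma)$ (such a path must cross an annular shell around $C_i$ and hence pick up length $\gtrsim\operatorname{diam}(C_i)$ inside $U_i$); the remaining circles must be handled separately, using that only finitely many $C_i$ have diameter above any fixed threshold (here disjointness of the $D_i$ plus the quasicircle fatness enter) together with termwise convergence $a_i\to0$. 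Second, the bound $\|h_n\|_{L^2}\lesssim\|a\|_{\ell^2}$ does not follow from $\sigma(U_i)\asymp\operatorname{diam}(C_i)^2$ alone: it requires bounded overlap of the $U_i$, and since each $U_i$ must contain a full neighborhood of $C_i$ for the length estimate to work, the overlap is genuinely unbounded across scales (in $F_{n,p}$ a corner of a peripheral square lies within $O(\operatorname{diam}(C_i))$ of peripheral circles $C_i$ of every smaller scale). One only gets bounded overlap scale by scale, and summing the resulting dyadic $L^2$ bounds gives $\sum_k\bigl(\sum_{i\in I_k}a_i^2\bigr)^{1/2}$, which is not controlled by $\|a\|_{\ell^2}$. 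Resolving this tension --- via the packing estimate $\sum_{\gamma\cap C_i\neq\emptyset,\ \operatorname{diam}(C_i)\leq\operatorname{diam}(\gamma)}\operatorname{diam}(C_i)^2\lesssim\operatorname{diam}(\gamma)^2$ and a decomposition of the bad path family by diameter --- is precisely the content of Lemmas~2.2 and~2.3 of Bonk--Merenkov, and it is the part of the argument your sketch compresses into two unproved ``$\lesssim$'' signs. You have correctly identified where the difficulty sits and what the uniform quasicircle hypothesis is for, but as written the central lemma is not established.
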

This is (\cite{BM}, Proposition 2.4). The uniqueness follows from elementary convexity argument.

\begin{pro}\label{extremal-group}
Let $S$ be a carpet in $\mathbb{S}^2$ whose peripheral circles are uniform quasicircles. Let $G$ be a group of homeomorphisms of $S$
 and $\Gamma$ be a path family in $\mathbb{S}^2$ with $\textup{mod}_{S/G}(\Gamma)<+\infty$. Suppose that for each $k\in \mathbb{N}$ there exists a family of peripheral circles $\mathcal{C}_k$ of $S$ and a constant $N_k\in\mathbb{N}$ with the following properties:
\begin{enumerate}
\item If $\mathcal{O}$ is any orbit of peripheral circles of $S$ under the action of $G$, then
$\#(\mathcal{O}\bigcap\mathcal{C}_k)\leq N_k$ for all $k\in \mathbb{N}.$
\item If $\Gamma_k$ is the family of all paths in $\Gamma$ that only meet peripheral circles in $\mathcal{C}_k$, then $\Gamma=\bigcup_k\Gamma_k.$
\end{enumerate}
Then extremal mass distribution for $\textup{mod}_{S/G}(\Gamma)$ exists and is unique.
\end{pro}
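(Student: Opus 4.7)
The plan is to mimic the variational proof of Proposition~\ref{extremal} in the $G$-equivariant setting, with hypotheses (1) and (2) playing the role of a uniform counting bound that lets a Fuglede-type admissibility argument go through over orbits rather than over circles. Since a $G$-invariant mass distribution $\rho$ is constant on each orbit $\mathcal{O}$, I regard $\rho$ as an element of $\ell^{2}$ on the countable index set of orbits, with $\|\rho\|^{2}=\sum_{\mathcal{O}}\rho(\mathcal{O})^{2}$; the problem becomes the minimization of $\|\rho\|^{2}$ over the convex subset of $\ell^{2}$ consisting of admissible $G$-invariant sequences.

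First I take a minimizing sequence $\{\rho_{n}\}$ of admissible $G$-invariant mass distributions with $\|\rho_{n}\|^{2}\to\textup{mod}_{S/G}(\Gamma)$. Since $\{\rho_{n}\}$ is bounded in $\ell^{2}$, the Banach--Alaoglu theorem yields a weakly convergent subsequence with limit $\rho\in\ell^{2}$, and $\rho$ is automatically $G$-invariant because the subspace of $G$-invariant sequences is weakly closed. Mazur's lemma then furnishes convex combinations $\widetilde{\rho}_{n}$ of tails of $\{\rho_{n}\}$ converging strongly in $\ell^{2}$ to $\rho$; admissibility and $G$-invariance are preserved under convex combinations, so each $\widetilde{\rho}_{n}$ remains admissible and $G$-invariant. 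Lower semi-continuity of the norm gives $\|\rho\|^{2}\leq\textup{mod}_{S/G}(\Gamma)$, so once admissibility of $\rho$ is established the reverse inequality, and hence extremality, follows.

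The main obstacle is precisely the admissibility of $\rho$: the inequality $\sum_{\gamma\cap C_{i}\neq\emptyset}\rho(C_{i})\geq 1$ must hold for almost every $\gamma\in\Gamma$, and $\ell^{2}$-convergence over orbits does not immediately transfer to this circle-wise sum, because a single path may meet many circles of one orbit. I rewrite the sum as $\sum_{\mathcal{O}}m_{\mathcal{O}}(\gamma)\rho(\mathcal{O})$, where $m_{\mathcal{O}}(\gamma):=\#\{C\in\mathcal{O}:\gamma\cap C\neq\emptyset\}$. Hypothesis (2) lets me decompose $\Gamma=\bigcup_{k}\Gamma_{k}$, and hypothesis (1) supplies the crucial uniform bound $m_{\mathcal{O}}(\gamma)\leq N_{k}$ for every $\gamma\in\Gamma_{k}$ and every orbit $\mathcal{O}$. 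Passing to a subsequence with $\sum_{n}\|\widetilde{\rho}_{n}-\rho\|<\infty$ and setting $\phi:=\sum_{n}|\widetilde{\rho}_{n}-\rho|\in\ell^{2}$, a standard Borel--Cantelli / modulus argument (the orbit-indexed analogue of the one in (\cite{BM}, Proposition~2.4)) shows that the set of $\gamma\in\Gamma_{k}$ for which $\sum_{\mathcal{O}}m_{\mathcal{O}}(\gamma)\phi(\mathcal{O})=\infty$ has modulus zero; on its complement, $\sum_{\mathcal{O}}m_{\mathcal{O}}(\gamma)|\widetilde{\rho}_{n}-\rho|(\mathcal{O})\to 0$, which transfers admissibility from $\widetilde{\rho}_{n}$ to $\rho$. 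Countable subadditivity of modulus over the $\Gamma_{k}$ then completes the admissibility step.

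Uniqueness follows from strict convexity of $\|\cdot\|^{2}$. If $\rho_{1}$ and $\rho_{2}$ were two extremal distributions, their midpoint $\tfrac{1}{2}(\rho_{1}+\rho_{2})$ would again be admissible and $G$-invariant, while the parallelogram identity
\[\bigl\|\tfrac{1}{2}(\rho_{1}+\rho_{2})\bigr\|^{2}=\tfrac{1}{2}\bigl(\|\rho_{1}\|^{2}+\|\rho_{2}\|^{2}\bigr)-\tfrac{1}{4}\|\rho_{1}-\rho_{2}\|^{2}\]
together with minimality would force $\rho_{1}=\rho_{2}$. The heart of the argument is the Fuglede-style step, and that is exactly where hypotheses (1) and (2) are indispensable: without the bound $m_{\mathcal{O}}(\gamma)\leq N_{k}$ one cannot push $\ell^{2}$-convergence over orbits through to the circle-wise admissibility sum.
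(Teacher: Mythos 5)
The paper does not actually prove this proposition: the statement is quoted from Bonk--Merenkov and the text's entire justification is the citation ``This is (\cite{BM}, Proposition 3.2).'' Your proposal is a correct reconstruction of that source's argument --- minimization in the orbit-indexed $\ell^2$ space, strong convergence of a convexified minimizing sequence, and the use of hypotheses (1)--(2) to pass from orbit-wise $\ell^2$ control to the circle-wise Fuglede-type admissibility lemma for uniform quasicircles (noting that (1) is what makes the restriction of $C\mapsto\phi(\mathcal{O}(C))$ to $\mathcal{C}_k$ square-summable over circles, and uniqueness follows from strict convexity) --- so there is nothing in the paper to compare it against beyond observing that it matches the cited proof.
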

This is (\cite{BM}, Proposition 3.2).

\section{Auxiliary results}\label{sec:auxiliary}
In this section, we collect a series of results obtained by M. Bonk and his coauthors \cite{BKM,Bo1}.
The theorems and propositions cited here are the cornerstone of our later proof
(as well as they were for the proof in \cite{BM}).

\subsection{Quasiconformal extention of quasisymmetric map}
\begin{pro}\label{quasiconformal_extend}
Let $S$ be a carpet in $\mathbb{S}^2$  whose peripheral circles are uniform quasicircles and let $f$ a quasisymmetric map of $S$ onto another carpet $\widetilde{S}\subseteq\mathbb{S}^2$. Then there exists a self-quasiconformal map $F$ on $\mathbb{S}^2$ whose restriction to $S$ is $f$.
\end{pro}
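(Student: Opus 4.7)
The plan is to build $F$ by extending $f$ across each complementary disk $D_i$ and then verifying that the resulting map on $\mathbb{S}^2$ is quasiconformal. The hypothesis that the peripheral circles are \emph{uniform} quasicircles is what will give the required dilatation bounds.

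First I would observe that both sides are comparable uniform quasidisks. Each $C_i$ is an $\eta$-quasicircle by hypothesis, and since $f$ is quasisymmetric on $S$, its restriction to any $C_i$ is quasisymmetric with the same control function, so the image $\widetilde{C}_i=f(C_i)$ is a quasicircle with a distortion constant depending only on $\eta$ and on the control function of $f$. By a standard result (see e.g.\ the three-points condition), the Jordan domains $D_i$ and $\widetilde{D}_i$ bounded by these curves are uniform quasidisks, hence each admits a $K_0$-quasiconformal homeomorphism $\varphi_i:\overline{D}_i\to\overline{\mathbb{D}}$ and $\widetilde{\varphi}_i:\overline{\widetilde{D}_i}\to\overline{\mathbb{D}}$ with $K_0$ independent of $i$.

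The next step is to construct the extension on each $D_i$. Conjugating $f|_{C_i}$ by the boundary parametrizations $\varphi_i$ and $\widetilde{\varphi}_i$ yields a quasisymmetric self-map of the unit circle with a distortion constant independent of $i$. I would then apply a standard boundary-to-interior extension, for example the Beurling--Ahlfors extension or the radial Douady--Earle extension, to obtain a $K_1$-quasiconformal self-map of $\overline{\mathbb{D}}$ and pull it back through $\widetilde{\varphi}_i^{-1}\circ(\cdot)\circ\varphi_i$ to get a $K$-quasiconformal map $F_i:\overline{D}_i\to\overline{\widetilde{D}_i}$ with $K$ depending only on $\eta$, on the control function of $f$, and on the universal constants of the extension, but \emph{not} on $i$. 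Define
\[
F(x)=\begin{cases} f(x), & x\in S,\\ F_i(x), & x\in \overline{D}_i.\end{cases}
\]
Because $F_i$ and $f$ agree on $\partial D_i=C_i$, the map $F$ is a well-defined homeomorphism of $\mathbb{S}^2$.

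The main obstacle is showing that $F$ is globally quasiconformal, and I would use the metric definition from the paper together with the fact that quasisymmetric maps between bounded subsets of $\mathbb{S}^2$ give pointwise bounded linear dilatation. For $x$ in the interior of some $D_i$, the dilatation bound at $x$ comes directly from the $K$-quasiconformality of $F_i$. For $x\in S$, including boundary points of some $D_i$, I would compare the ratios
\[
\frac{\max\{d(F(x),F(y)):d(x,y)\le r\}}{\min\{d(F(x),F(y)):d(x,y)\ge r\}}
\]
in terms of the quasisymmetric control function $\eta_f$ of $f$; the points $y$ that realize the max and min either lie in $S$ (where quasisymmetry of $f$ is applied directly) or lie in some $D_j$ with $\mathrm{diam}(D_j)$ controlled by $r$, in which case the uniform quasicircle and uniform extension bounds allow one to replace $y$ by a point of $C_j\subset S$ at comparable distance. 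Combining these two cases gives a uniform bound $H$ on $H_F(x)$ on all of $\mathbb{S}^2$, so $F$ is the required quasiconformal extension.
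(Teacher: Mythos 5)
The paper does not actually prove this proposition: it is quoted verbatim from Bonk (\cite{Bo1}, Proposition 5.1), so there is no internal argument to compare against. Your outline is the standard proof of that result --- uniformly quasiconformal extensions over the complementary quasidisks followed by a dilatation estimate for the glued map --- and the overall strategy is sound. Two points need attention before it becomes a proof. First, you should record the purely topological fact that $f$ carries peripheral circles of $S$ to peripheral circles of $\widetilde S$, so that the Jordan regions $\widetilde D_i$ you extend into really are the complementary components of $\widetilde S$; and the continuity of the glued $F$ at a point of $S$ that is an accumulation point of infinitely many $D_i$ uses $\mathrm{diam}(D_i)\to 0$ and $\mathrm{diam}(\widetilde D_i)\to 0$, not merely the agreement of $F_i$ and $f$ on each $C_i$.

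Second, and more seriously, the final dilatation estimate contains a false reduction. If $x\in S$ lies on a peripheral circle $C_j$ and $y\in D_j$ satisfies $d(x,y)\le r$, all you may conclude is $\mathrm{dist}(y,C_j)\le d(x,y)\le r$; the diameter of $D_j$ is \emph{not} controlled by $r$, since $D_j$ may be a fixed large disk with $x$ on its boundary. Replacing $y$ by a nearby point $y'\in C_j$ therefore controls $d(x,y')$ but not, without further input, $d(F(x),F(y))$ in terms of $d(F(x),F(y'))$. What rescues the argument is that each $F_j$, being a $K$-quasiconformal map between uniform quasidisks, is in fact $\eta'$-quasisymmetric on the closed region $\overline D_j$ with $\eta'$ independent of $j$; applying this quasisymmetry to the triple $x,y,y'$ in $\overline D_j$ gives $d(F_j(x),F_j(y))\asymp d(F_j(x),F_j(y'))$ and closes the case analysis. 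You gesture at ``uniform extension bounds,'' but the property you actually need is uniform quasisymmetry of the extensions up to the boundary, not just interior quasiconformality, and it should be stated and used explicitly.
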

This is (\cite{Bo1}, Proposition 5.1).

\subsection{Quasisymmetric uniformization and rigidity}
The peripheral circles $\{C_i \}$ of $S$  are called \emph{uniformly relatively separated} if the pairwise distances are uniformly bounded away from zero. i.e., there exists $\delta>0$ such that $$\Delta(C_i,C_j)=\frac{\textup{dist}(C_i,C_j)}{\textup{min}\{\textup{diam}(C_i),\textup{diam}(C_j)\}}\geq\delta$$ for any two distinct $i$ and $j$. This property is preserved under quasisymmetric maps. See (\cite{Bo1}, Corollary 4.6).

\begin{thm}\label{uniformization_by_round_carpets}
Let $S$ be a carpet in $\mathbb{S}^2$ whose peripheral circles are uniformly relatively separated uniformly quasicircles, then there exists
a quasisymmetric map of $S$ onto a round carpet.
\end{thm}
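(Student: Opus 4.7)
The plan is to prove the theorem by an exhaustion-and-limit argument, reducing the infinitely-connected case to the classical Koebe uniformization for finitely-connected domains and then passing to the limit with careful compactness control provided by the uniform separation and uniform quasicircle hypotheses. First, I would enumerate the complementary Jordan regions as $\{D_i\}_{i\in\mathbb{N}}$ and, for each $k$, consider the $k$-connected region $\Omega_k=\mathbb{S}^2\setminus\bigcup_{i=1}^{k}\overline{D_i}$. By the classical Koebe uniformization theorem for finitely-connected planar domains, there exists a conformal map $\phi_k\colon\Omega_k\to\Omega_k'$ onto a circle domain (i.e., the complement of finitely many pairwise disjoint closed round disks). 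After normalizing (say, by fixing three points on $\partial D_1$ and/or prescribing the image of a fixed interior point), $\phi_k$ is uniquely determined.

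Second, I would promote each $\phi_k$ to a quasisymmetric map on the closure. Since the peripheral circles $C_i$ are uniform quasicircles, each $\overline{D_i}$ can be mapped to a closed round disk by a uniformly quasisymmetric map. Combining this with the conformal $\phi_k$ and using the Ahlfors–Beurling type extension (or the Bonk–Merenkov-style approach to extending conformal maps across quasicircle boundaries), one obtains quasisymmetric homeomorphisms $\widetilde{\phi}_k\colon\mathbb{S}^2\to\mathbb{S}^2$ whose distortion function $\eta_k$ can be controlled in terms of the quasisymmetric parameter of the quasicircles $C_i$ alone; crucially, this parameter does not degenerate as $k\to\infty$. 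The uniform relative separation of the $\{C_i\}$ is used here to prevent peripheral circles from collapsing into one another under the normalized maps, keeping the family $\{\widetilde{\phi}_k\}$ equicontinuous.

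Third, I would extract a limit. By the compactness theorem for families of $\eta$-quasisymmetric maps with fixed distortion function and a suitable normalization (Arzelà–Ascoli applied via the quantitative modulus of continuity implied by quasisymmetry), a subsequence $\widetilde{\phi}_{k_j}$ converges uniformly on $\mathbb{S}^2$ to a quasisymmetric map $\phi\colon\mathbb{S}^2\to\mathbb{S}^2$. The preservation of uniform relative separation under quasisymmetry (the statement in the paragraph preceding the theorem) implies that the image peripheral circles $\phi(C_i)$ remain pairwise disjoint with controlled geometry, so $\phi(S)$ is again a carpet. For each fixed $i$, the restriction $\phi_{k_j}|_{\partial D_i}$ for $k_j\geq i$ sends $C_i$ to a round circle; passing to the limit shows $\phi(C_i)$ is a round circle. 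Hence $\phi(S)$ is a round carpet.

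The main obstacle is the passage to the limit while preserving the property that every single peripheral circle is mapped to a geometric circle. A purely diagonal argument is not enough, because the normalization of $\phi_k$ could in principle cause some of the images of individual $D_i$ (for large $i$) to shrink to points; avoiding this degeneration is exactly where the two hypotheses — uniform quasicircles and uniform relative separation — must be used in a quantitative way to obtain the modulus-of-continuity estimates that fuel the compactness. Once that delicate uniform control is in place, the limit map $\phi$ is quasisymmetric on $\mathbb{S}^2$, restricts to a quasisymmetric homeomorphism of $S$ onto a round carpet, and the theorem follows.
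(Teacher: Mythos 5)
The paper does not prove this statement at all: immediately after the theorem it says ``This is (\cite{Bo1}, Corollary 1.2)'' and uses it as a black box, so there is no internal proof to compare against. Your sketch is a plausible high-level outline of the strategy Bonk actually follows in \cite{Bo1} (exhaustion by finitely connected domains, Koebe's circle domain theorem, uniformly quasisymmetric extension, compactness, and a limit), but it is an outline rather than a proof, and the gap sits exactly where you yourself locate ``the main obstacle.''

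Concretely, the unproved step is the assertion in your second paragraph that the extensions $\widetilde{\phi}_k$ are $\eta_k$-quasisymmetric with distortion ``controlled in terms of the quasisymmetric parameter of the quasicircles $C_i$ alone.'' That is the entire analytic content of the theorem in the finitely connected case: one must show that the Koebe map of $\Omega_k$ onto a circle domain is quasisymmetric up to the boundary with a distortion function depending only on the quasicircle parameter \emph{and} the separation constant $\delta$, uniformly in $k$. This requires quantitative conformal modulus estimates for ring domains separating boundary components, and the uniform relative separation enters precisely there, to give uniform lower bounds on those moduli --- not merely ``to prevent peripheral circles from collapsing'' as you suggest; as written, your claim would make the separation hypothesis irrelevant to the distortion bound, which it is not. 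Without that quantitative input the family $\{\widetilde{\phi}_k\}$ need not be equicontinuous, Arzel\`a--Ascoli does not apply, and the normalized limits can degenerate (images of small $D_i$ collapsing to points). The remaining steps of your sketch --- gluing quasiconformal extensions across the quasidisks $D_i$, extracting a convergent subsequence, and observing that $\phi(C_i)$ is a round circle once $\phi$ is known to be a homeomorphism --- are routine only after that uniform bound is in hand. So your proposal correctly identifies the architecture of Bonk's argument but omits its load-bearing estimate; for the purposes of this paper the correct move is simply to cite \cite{Bo1}.
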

This is (\cite{Bo1}, Corollary 1.2). Recall that a carpet $S=\mathbb{S}^2\setminus\bigcup D_i$ is called \emph{round} if each $D_i$ is an open spherical disk.

\begin{thm}\label{quasisymmetric_rigidity_of_round_carpets}
Let $S$ be a round carpet in $\mathbb{S}^2$ of measure zero. Then every quasisymmetric map of $S$ onto any other round carpet is the restriction
of a M\"{o}bius transformation.
\end{thm}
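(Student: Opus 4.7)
My plan is to carry out the classical reflection-and-ergodicity strategy that is natural for round carpets. First, I would invoke Proposition \ref{quasiconformal_extend} to extend the given quasisymmetric map $f\colon S \to \widetilde{S}$ to a quasiconformal self-homeomorphism $F\colon \mathbb{S}^2 \to \mathbb{S}^2$ of bounded dilatation $K$. Because $F$ is a homeomorphism sending a carpet to a carpet, it must permute complementary components: there is a bijection $\sigma$ with $F(D_i)=\widetilde D_{\sigma(i)}$ and $F(C_i)=\widetilde C_{\sigma(i)}$ for each peripheral circle.

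The decisive feature of the round hypothesis is that the reflections $R_i$ across $C_i$ and $\widetilde R_j$ across $\widetilde C_j$ are anti-Möbius maps of $\mathbb{S}^2$, and therefore $1$-quasiconformal. I would use this to produce a reflected extension: define
$$\hat F(z) \;=\; \widetilde R_{\sigma(i)} \circ F \circ R_i(z) \quad \text{for } z\in D_i,$$
and $\hat F=F$ on $S$. The two definitions agree on each $C_i$ (since $R_i$ fixes $C_i$ pointwise and $F(C_i)=\widetilde C_{\sigma(i)}$ is fixed pointwise by $\widetilde R_{\sigma(i)}$), so $\hat F$ is a continuous $K$-quasiconformal self-map of $\mathbb{S}^2$ still extending $f$.

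Next I would iterate this reflection procedure across every generation of reflected images of the $C_i$: inside any $D_i$, the reflection $R_i$ carries each $C_j$ ($j\ne i$) to a round circle, and one replays the reflection recipe on the round disks these bound, and so on through all finite words in the $R_i$. Passing to a limit via the usual normal-families argument for $K$-quasiconformal maps yields a $K$-quasiconformal self-map $F^{\ast}$ of $\mathbb{S}^2$ whose Beltrami coefficient $\mu_{F^{\ast}}$ is invariant under the pullback action of the Kleinian reflection group $\Gamma=\langle R_i\rangle$. By construction, the $\Gamma$-orbit of $\bigcup_i D_i$ is an open set whose complement lies in the limit set of $\Gamma$, which in turn is contained in $S$ and hence has spherical measure zero. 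A standard ergodicity/symmetry argument — each $R_i$ is orientation-reversing, so $\Gamma$-invariance forces $\mu_{F^{\ast}}$ at a.e.\ $z$ to be equal to infinitely many of its own ``conjugate'' reflections simultaneously — is consistent with $|\mu_{F^{\ast}}|<1$ a.e.\ only if $\mu_{F^{\ast}}=0$ almost everywhere. Therefore $F^{\ast}$ is $1$-quasiconformal on the sphere, hence Möbius, and its restriction to $S$ coincides with $f$.

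\textbf{Main obstacle.} The delicate step is the last one: verifying that $\Gamma$-invariance of a measurable Beltrami coefficient, together with measure zero of the exceptional set $S$, really does force the coefficient to vanish. This requires either a direct manipulation of the reflection pullback formula $\mu\circ R_i\cdot(\overline{R_i'}/R_i')$ showing that fixed points exist only at $\mu=0$, or an appeal to the Sullivan–Ahlfors-type ergodicity of the Schottky reflection group whose limit set has measure zero. Care is also needed to justify convergence of the iterated reflection construction and the compatibility of that limit with the Beltrami equation; the uniform $K$-quasiconformality at each finite stage and a fixed three-point normalization make this routine but should be spelled out.
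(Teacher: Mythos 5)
The paper does not actually prove this statement: it is quoted as Theorem~1.2 of Bonk--Kleiner--Merenkov \cite{BKM} and used as a black box, so there is no in-paper argument to compare with. Your proposal reconstructs the general strategy of \cite{BKM}: extend $f$ quasiconformally, make the extension equivariant for the groups generated by reflections in the peripheral circles, and show the equivariant extension is $1$-quasiconformal. The construction of the equivariant extension (gluing $\tilde R_{\sigma(i)}\circ F\circ R_i$ across the circles $C_i$, iterating over generations, passing to a limit by normal families) is sound and standard.

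The genuine gap is exactly the step you flag as the ``main obstacle,'' and it is not a routine verification --- it is the whole content of the theorem. $\Gamma$-invariance of a Beltrami coefficient does not force it to vanish: the identical equivariant-extension construction goes through for round carpets and Schottky sets of \emph{positive} measure, where rigidity genuinely fails (also shown in \cite{BKM}), so in that setting there exist nonzero $\Gamma$-invariant Beltrami coefficients. Consequently no pointwise manipulation of the pullback formula $\mu\circ R_i\cdot\overline{R_i'}/R_i'$ can work: $R_i$ does not fix almost every point, so there is no ``fixed-point equation'' for $\mu(z)$; invariance only says $|\mu|$ is a $\Gamma$-invariant function, and ergodicity (were it available) would give $|\mu|$ constant a.e., not zero. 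Your two proposed rescues also rest on incorrect premises: the limit set of $\Gamma=\langle R_i\rangle$ is not contained in $S$ and is not null --- a fundamental set for the action is essentially $S$ itself, so the orbit $\Gamma(S)$ has measure zero and the limit set has \emph{full} measure; and Sullivan--Ahlfors no-invariant-line-field results concern finitely generated groups, whereas $\Gamma$ here is infinitely generated. The mechanism that actually closes the argument in \cite{BKM} uses the measure-zero hypothesis geometrically: since $\Gamma(S)$ is null, almost every $z$ lies in an infinite nested chain of disks from the $\Gamma$-orbit of the $D_i$, and these disks shrink to $z$; the equivariant extension maps every circle in this orbit onto a round circle; and at a point where a quasiconformal map is differentiable with nondegenerate derivative (almost every point), sending arbitrarily small round circles through $z$ to round circles forces the derivative to be a conformal linear map, whence $\mu=0$ a.e. That geometric input --- roundness of the image circles exploited at a point of differentiability --- is what your sketch is missing.
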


This is (\cite{BKM}, Theorem 1.2).
Here by definition a M\"{o}bius transformation is a fractional linear transformation on $\mathbb{S}^2\cong\hat{\mathbb{C}}$ or the complex-conjugate of such a map. So we allow a M\"{o}bius transformation to be orientation-reversing.
\subsection{Three-Circle Theorem}
Let $S\subseteq\mathbb{S}^2$ be a carpet. A homeomorphism embedding $f: S\rightarrow\mathbb{S}^2$
 is called \emph{orientation-preserving} if  some homeomorphic extension $F:\mathbb{S}^2\rightarrow\mathbb{S}^2$ of
 $f$ is orientation-preserving on $\mathbb{S}^2$ (such an extension exists and the definition is independent of
 the choice of extension, see the proof of Lemma 5.3 in \cite{Bo1}).

\begin{cor}\label{three_circle_theorem}
Let $S$ be a carpet in $\mathbb{S}^2$ of measure zero whose peripheral circles are uniformly relatively separated uniform quasicircles and  $C_i, i=1,2,3$ be three distinct peripheral circles of $S$. Let $f$ and $g$ be two orientation-preserving quasisymmetric self-maps of $S$.
Then we have the following rigidity results:
\begin{enumerate}
\item Assume that $f(C_i)=g(C_i)$ for $i=1,2,3$. Then $f=g$.

\item Assume that $f(C_i)=g(C_i)$ for $i=1,2$ and $f(p)=g(p)$ for a given point $p\in S$. Then $f=g$.

\item  Assume that $G$ is the group of all orientation-preserving quasisymmetric self-maps of $S$ that fix $C_1$, $C_2$. Then $G$ is a finite cyclic group.

\item Assume that $G$ is the group of all orientation-preserving quasisymmetric self-maps of $S$ that fix $C_1$ and fix a given point $q\in C_1$, then $G$ is an infinite cyclic group.
\end{enumerate}
\end{cor}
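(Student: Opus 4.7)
I plan to reduce all four statements to rigidity of Möbius transformations preserving a round carpet. By Theorem~\ref{uniformization_by_round_carpets}, pick a quasisymmetric $\phi:S\to S_{0}$ onto a round carpet $S_{0}$; Proposition~\ref{quasiconformal_extend} gives a quasiconformal extension of $\phi$ to $\mathbb{S}^{2}$, which preserves the measure-zero property, so $S_{0}$ also has measure zero. Conjugating $f,g$ by $\phi$ and relabelling, I may assume $S$ is round, and Theorem~\ref{quasisymmetric_rigidity_of_round_carpets} then lets me write $f$ and $g$ as restrictions of orientation-preserving Möbius transformations $\widetilde{f},\widetilde{g}$ of $\mathbb{S}^{2}$. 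Setting $h=\widetilde{g}^{-1}\widetilde{f}$, the element $h$ is Möbius, orientation-preserving, preserves $S$ as a set, and fixes setwise each of the prescribed peripheral circles.

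\textbf{Parts (1) and (2).} After a further Möbius normalization I may take $C_{1}$ and $C_{2}$ to be concentric round circles about the origin in $\mathbb{C}$. The orientation-preserving Möbius transformations fixing both $C_{1}$ and $C_{2}$ setwise form the rotation group $\{z\mapsto e^{i\theta}z\}$, so $h(z)=e^{i\theta_{0}}z$ for some $\theta_{0}$. For (1), invariance of $C_{3}$ under a nontrivial rotation would force $C_{3}$ to be a third concentric circle; but three disjoint concentric round circles in $\mathbb{S}^{2}$ cannot be the boundaries of three pairwise disjoint Jordan domains, since any two of the three complementary components are nested, contradicting disjointness of the peripheral disks. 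Hence $\theta_{0}=0$ and $f=g$. For (2), the only fixed points of the rotation are $0$ and $\infty$, and both lie in peripheral disks bounded by $C_{1}$ and $C_{2}$, hence outside $S$; so $p\in S$ forces $p\notin\{0,\infty\}$, and $e^{i\theta_{0}}p=p$ gives $\theta_{0}=0$.

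\textbf{Parts (3) and (4).} In both parts $G$ is a closed subgroup of the relevant Möbius stabilizer, because any limit of Möbius transformations preserving the closed set $S$ and the prescribed data is again of the same form. For (3), $G$ is a closed subgroup of the rotation group $S^{1}$ from Part~(1), hence either $S^{1}$ itself or finite cyclic. The case $G=S^{1}$ is ruled out by a countability argument: the $S^{1}$-orbit of any peripheral circle $C\notin\{C_{1},C_{2}\}$ is either the single concentric case (impossible by Part~(1)) or an uncountable family of pairwise disjoint circles, contradicting countability of peripheral circles. So $G$ is finite cyclic. For (4), normalize $C_{1}=\widehat{\mathbb{R}}$ with $q=\infty$; since elements of $G$ must preserve the specific half-plane $D_{1}$, $G$ embeds as a closed subgroup of the positive affine group $\mathrm{Aff}^{+}(\mathbb{R})=\{z\mapsto az+b:a>0,\,b\in\mathbb{R}\}$. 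The countability argument again excludes any $1$-parameter subgroup, so $G$ is discrete, and since $\mathrm{Aff}^{+}(\mathbb{R})$ is torsion-free, so is $G$. The final step is to rule out rank-$2$ discrete subgroups: using the hypothesis that the peripheral circles are uniformly relatively separated uniform quasicircles, together with an orbit-accumulation argument near $q$, one shows that a subgroup jointly generated by an independent translation and dilation would cause peripheral circles to cluster near $q$ in a way incompatible with uniform separation. This leaves $G$ infinite cyclic.

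\textbf{Main obstacle.} The subtlest step is the rank bound in Part~(4): excluding discrete subgroups of $\mathrm{Aff}^{+}(\mathbb{R})$ of type $\mathbb{Z}\ltimes\mathbb{Z}$, which are a priori compatible with the Möbius normalization. This uses the quasicircle and separation hypotheses in an essential way, whereas Parts (1)--(3) and the reduction to the round setting rest on elementary nesting and countability arguments once the Möbius normalization is in place.
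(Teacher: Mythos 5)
Your overall route is exactly the paper's: uniformize to a round carpet of measure zero via Theorem \ref{uniformization_by_round_carpets} and Proposition \ref{quasiconformal_extend}, transport everything by the induced isomorphism of quasisymmetric groups, and invoke Theorem \ref{quasisymmetric_rigidity_of_round_carpets} to reduce to rigidity of M\"obius transformations. Your treatments of (1), (2) and (3) are correct and essentially identical to the paper's (the paper phrases the discreteness in (3) via the action on the countable set of peripheral circles, which is the same countability argument you use).

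The one genuine problem is in part (4), where you leave the decisive step unproved and, moreover, misdiagnose what it requires. After normalizing $C_1=\widehat{\mathbb{R}}$, $q=\infty$, you correctly land in $\mathrm{Aff}^{+}(\mathbb{R})=\{z\mapsto az+b: a>0\}$ and correctly get discreteness from countability of the peripheral circles; but you then announce, as the ``main obstacle,'' the exclusion of discrete subgroups of type $\mathbb{Z}\ltimes\mathbb{Z}$ by an unspecified ``orbit-accumulation argument'' using the separation hypotheses. No such subgroups exist: a discrete subgroup of $\mathrm{Aff}^{+}(\mathbb{R})$ is automatically cyclic. Indeed, if $G$ contains $g(z)=a(z-z_0)+z_0$ with $a\neq 1$ and some $h$ not fixing $z_0$, then (taking $z_0=0$, $h(z)=a'z+b'$ with $b'\neq 0$) the conjugates $g^{-n}hg^{n}(z)=a'z+a^{-n}b'$ form an infinite sequence of distinct elements converging to $z\mapsto a'z$, contradicting discreteness; hence either all elements of $G$ are translations, or all fix a common point, and in either case $G$ is a discrete subgroup of $\mathbb{R}$ or of $\mathbb{R}_{>0}$, hence trivial or infinite cyclic. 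So the missing step is pure group theory and needs no carpet-specific input. (Incidentally, your normalization at $q=\infty$ is cleaner than the paper's, which places $h(q)=0$ and asserts without comment that the stabilizer consists of maps $z\mapsto\lambda z$; and note that both your argument and the paper's actually yield ``trivial or infinite cyclic,'' matching the paper's proof rather than the literal wording of the statement.)
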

\begin{proof}
The proof we given here is contained in \cite{BM}. Since its conclusion is important for
the rest of our paper, we include it here for completeness.

By Theorem \ref{uniformization_by_round_carpets}, there exists a quasisymmetric map $h$ of $S$ onto a round carpet $\widetilde{S}$. Using Proposition \ref{quasiconformal_extend}, we can extend $h$ to a quasiconformal map on $\mathbb{S}^2$. Since quasiconformal maps preserve the class of sets of measure zero, $\widetilde{S}$ has measure zero as well. We denote by $G_0$ and $\widetilde{G_0}$ the group of all orientation-preserving quasisymmetric self-maps of $S$ and $\widetilde{S}$, respectively. By the quasisymmetric rigidity of round carpets (Theorem \ref{quasisymmetric_rigidity_of_round_carpets}), $\widetilde{G_0}$ consists of the restriction of orientation-preserving M\"{o}bius transformations that fix $\widetilde{S}$.

Now we look at the homomorphism $h_*$ induced by $h$:
\begin{eqnarray*}\label{isomorphic}
h_*: G_0 &\rightarrow &\widetilde{G_0}, \\
  \psi &\mapsto& h\circ\psi\circ h^{-1}.
\end{eqnarray*}
We can check that $h_*$ is well-defined and is an isomorphic. Since $h_*(f)$ and $h_*(g)$
are orientation-preserving M\"{o}bius transformation and $h_*(f)\circ(h_*(g))^{-1}$ fixes distinct spherical round circles $h(C_i)$, $i=1,2,3$,
we know that $h_*(f)\circ(h_*(g))^{-1}=\textup{id}$ and $(1)$ follows.

We can prove $(2)$ from the fact that any orientation-preserving M\"{o}bius transformation
 fixing distinct spherical round circles and a given non-common center point $p\in \mathbb{S}^2$ is the identity.

To prove $(3)$,  it suffices to show that $\widetilde{G}=h_*(G)$ is a finite cyclic. By post-composing fractional linear transformation to $h$, we can assume that $h(C_1)$ and $h(C_2)$ are distinct spherical round circles with the same center. Note that $\widetilde{G}$ consists of orientation-preserving M\"{o}bius transformation, fixing $h(C_1)$, $h(C_2)$ and $\widetilde{S}$. Moreover, $\widetilde{G}$ must be a discrete group as it maps peripheral round circles of $S$ to peripheral round circles. Hence $\widetilde{G}$ is a finite cyclic group, then $(3)$ follows.

For $(4)$, similarly, by post-composing fractional linear transformation to $h$, we can assume that $h(C_1)=\mathbb{R}\bigcup\{\infty\}$, $h(q)=0$ and $\widetilde{S}$ is contained in the upper half-plane. Then the maps in $\widetilde{G}$ are of the form: $z\mapsto\lambda z$ with $\lambda>0$, fixing $\widetilde{S}$. By the same reason as $(3)$, $\widetilde{G}$ is a discrete group. So there exists a $\lambda_0\geq1$ such that $\widetilde{G}=\{z\mapsto\lambda_0^n z|n\in\mathbb{N}\}$. It follows that $\widetilde{G}$, and hence also $G$, is the trivial group consisting only of the identity or an infinite cyclic group. Therefore, $(4)$ follows.
\end{proof}

\subsection{Square carpets}
A $\mathbb{C}^*$-$Cylinder$ $A$ is a set of the form $$A=\{z\in\mathbb{C};r\leq|z|\leq R\}$$ with $0<r<R<+\infty$. The metric on $A$ induced by the length element $|dz|/|z|$ which is the flat metric. Equipped with this metric, $A$ is isometric to a finite cylinder of circumference $2\pi$ and length $\textup{log}(R/r)$. The boundary components $\{ z\in \mathbb{C};|z|=r\}$ and $\{z\in\mathbb{C};|z|=R\}$ are called the inner and outer boundary components of $A$, respectively.

A $\mathbb{C}^*$-$square$ $Q$ is a Jordan region of the form $$Q=\{\rho e^{i\theta}:a<\rho<b,\alpha<\theta<\beta\}$$ with $0<\textup{log}(b/a)=\beta-\alpha<2\pi$. We call the quantity $$l_{\mathbb{C}^*}(Q)=\textup{log}(b/a)=\beta-\alpha$$ its side length. Clearly, two opposite sides of $Q$ parallel to the boundaries of $A$, while the other two perpendicular to the boundaries of $A$.

A square carpet $T$ in a $\mathbb{C}^*$-cylinder $A$ is a carpet that can be written as $$T=A\setminus\bigcup_i Q_i,$$ where the sets $Q_i$, $i\in I$, are $\mathbb{C}^*$-squares whose closures are pairwise disjoint and contained in the interior of $A$.

\begin{thm}
Let $S$ be a carpet of measure zero in $\mathbb{S}^2$ whose peripheral circles are uniformly relatively separated uniform quasicircles, and $C_1$ and $C_2$ two distinct peripheral circles of $S$. Then there exists a quasisymmetric map $f$ from $S$ onto a square carpet $T$ in a $\mathbb{C}^*$-cylinder $A$ such that $f(C_1)$ is the inner boundary component of $A$ and $f(C_2)$ is the outer one.
\end{thm}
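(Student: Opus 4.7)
The plan is to reduce to a round carpet, normalize the two distinguished peripheral circles to concentric round circles via a M\"obius transformation, and then conformally uniformize the complementary domain so that the remaining peripheral circles become $\mathbb{C}^*$-squares.

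By Theorem \ref{uniformization_by_round_carpets}, there exists a quasisymmetric map $h : S \to \widetilde{S}$ onto a round carpet $\widetilde{S}\subset\mathbb{S}^2$. By Proposition \ref{quasiconformal_extend}, $h$ extends to a quasiconformal self-map of $\mathbb{S}^2$; since quasiconformal maps preserve the class of measure-zero sets, $\widetilde{S}$ also has measure zero, and its peripheral circles remain uniform quasicircles that are uniformly relatively separated. Now $h(C_1)$ and $h(C_2)$ are two disjoint geometric circles in $\mathbb{S}^2$, so some M\"obius transformation $\phi$ sends them to the concentric circles $\{|z|=r\}$ and $\{|z|=R\}$ with $0<r<R<\infty$. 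After this normalization, $\widetilde{S}':=\phi(\widetilde{S})$ sits inside the $\mathbb{C}^*$-cylinder $A=\{r\le|z|\le R\}$, its two distinguished peripheral circles coincide with the two components of $\partial A$, and every other peripheral circle is a round circle bounding an open disk $D_i\subset\mathrm{int}(A)$.

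The central step is to conformally uniformize the countably connected planar domain $\Omega:=\mathrm{int}(A)\setminus\bigcup_i\overline{D_i}$ onto a domain of the form $A\setminus\bigcup_i Q_i$, where the $Q_i$ are pairwise disjoint $\mathbb{C}^*$-squares with closures in $\mathrm{int}(A)$. This is a countably-connected analogue of the classical Koebe uniformization and is established via Schramm's transboundary extremal length applied to the $\mathbb{C}^*$-metric on $A$: the extremal mass distribution for the family of paths joining the two components of $\partial A$ determines the side lengths of the target squares, and integrating this distribution produces the conformal map $\psi$. The hypotheses that $\widetilde{S}$ has measure zero and that its peripheral circles are uniformly relatively separated uniform quasicircles ensure that $\psi$ extends continuously to a homeomorphism $\overline{\Omega}\cup\widetilde{S}'\to A\setminus\bigcup_i\mathrm{int}(Q_i)$ that preserves the inner and outer boundaries of $A$.

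Setting $f:=\psi\circ\phi\circ h$ gives a homeomorphism of $S$ onto a square carpet $T\subset A$ with $f(C_1)$ and $f(C_2)$ equal to the inner and outer boundary of $A$, respectively. Quasisymmetry of $f$ on $S$ reduces to quasisymmetry of $\psi|_{\widetilde{S}'}$, which follows from the fact that $\psi$ is conformal on $\Omega$ and that both carpets $\widetilde{S}'$ and $T$ have uniform quasicircle and uniformly relatively separated peripheral families (a standard criterion then upgrades boundary continuity to the quasisymmetric condition). \emph{The main obstacle} is the conformal uniformization to a square carpet: the classical Koebe-type theorems treat only finitely connected domains, so passing to the present countably connected setting requires the full transboundary-modulus machinery together with a careful boundary-extension argument that exploits the geometric regularity of the peripheral circles.
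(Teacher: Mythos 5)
The paper does not actually prove this statement: it is quoted verbatim as Theorem~1.6 of Bonk's \emph{Uniformization of Sierpi\'nski carpets in the plane} (\cite{Bo1}), so there is no internal argument to compare yours against. Your sketch is a reasonable reconstruction of the broad strategy behind Bonk's theorem (normalize, then uniformize via transboundary/carpet modulus), but as a proof it has a genuine gap, and you have in fact pointed at it yourself: everything of substance is concentrated in the step you call ``the central step.'' The claim that the countably connected domain $\Omega$ admits a conformal map onto $A\setminus\bigcup_i Q_i$ with pairwise disjoint $\mathbb{C}^*$-squares, that this map extends homeomorphically to the carpet, and that the extension is quasisymmetric, \emph{is} the theorem; saying it ``is established via Schramm's transboundary extremal length'' and that ``integrating this distribution produces the conformal map $\psi$'' is a citation in disguise rather than an argument. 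No construction of the conjugate (angular) coordinate is given, no reason is offered why the images of the peripheral disks come out as squares rather than arbitrary Jordan regions, and no existence/uniqueness statement for the extremal mass distribution is justified at this level of generality.

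The second concrete problem is the final quasisymmetry claim. Conformality of $\psi$ on the open set $\Omega$ gives, by itself, no quantitative control on the restriction of (the boundary extension of) $\psi$ to the fractal set $\widetilde S'$; a conformal map of a countably connected domain need not extend quasisymmetrically to a boundary carpet, and there is no ``standard criterion'' that upgrades continuity of the boundary extension to an $\eta$-quasisymmetric estimate merely because both peripheral families are uniform quasicircles and uniformly relatively separated. In \cite{Bo1} the quasisymmetry of the uniformizing map is a substantial separate argument in which the measure-zero and uniform-relative-separation hypotheses are used to compare carpet (transboundary) moduli of ring domains on both sides; that is precisely where the geometric hypotheses of the theorem earn their keep, and it is missing here. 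So your proposal should be read as an outline of how one would organize a citation of \cite{Bo1}, not as a self-contained proof.
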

This is (\cite{Bo1}, Theorem 1.6).

Let $S$ be a carpet in $\mathbb{S}^2$ and $C_1, C_2$ be two distinct peripheral circles
of $S$. Soppose that the Jordan regions $D_1$ and $D_2$ are the complementary components of
$S$ bounded by $C_1$ and $C_2$ respectively.
We let $\Gamma(C_1, C_2)$ be the family of all open paths
in $S^2\setminus \overline{D}_1\cup \overline{D}_2$ that connects $\overline{D}_1$ and $\overline{D}_2$.

\begin{pro}\label{extremal_mass}
Let $S$ be a carpet of measure zero in $\mathbb{S}^2$ whose peropheral circles are uniformly relatively separated uniform quasicircles, and $C_1$ and $C_2$ two distinct peripheral circles of $S$. Then

$(1)$ $\textup{mod}_S(\Gamma(C_1,C_2))$ has finite and positive total mass.

$(2)$ Let $f$ be a quasisymmetric map of $S$ onto a square carpet $T$ in a $\mathbb{C}^*$-cylinder $A=\{z\in\mathbb{C};r\leq |z|\leq R\}$ such that $C_1$ corresponds to the inner and $C_2$ to the outer boundary components of $A$. Then the extremal mass distribution is given as follows: $$\rho(C_1)=\rho(C_2)=0, \  \rho(C)=\frac{l_{\mathbb{C}^*}(f(C))}{\textup{log}(R/r)}$$ with the peripheral circles $C\neq C_1, C_2$ of $S$.
\end{pro}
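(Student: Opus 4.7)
The plan is to transport the problem from $S$ to the square carpet $T$ via a quasiconformal extension and then compute the carpet modulus directly in the flat cylindrical geometry of $A$.

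\textbf{Step 1 (Reduction to the square carpet).} Proposition \ref{quasiconformal_extend} extends $f$ to a quasiconformal self-map $F$ of $\mathbb{S}^2$, and Lemma \ref{lem:invariant} then gives $\textup{mod}_S(\Gamma(C_1,C_2)) = \textup{mod}_T(F(\Gamma(C_1,C_2)))$. The correspondence $\rho \mapsto \rho \circ f^{-1}$ is a bijection between admissible mass distributions on $S$ and on $T$ that preserves the total mass, so it suffices to establish (1) and (2) on $T$, with $C_1,C_2$ playing the role of the inner and outer boundary components of $A$.

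\textbf{Step 2 (Admissibility and mass of $\rho_0$).} Set $l_i := l_{\mathbb{C}^*}(Q_i)$ and let $\rho_0$ be as in the statement. For any $\gamma\in\Gamma(C_1,C_2)$ the continuous function $\log|\cdot|$ along $\gamma$ covers $(\log r,\log R)$ by the intermediate value theorem. At each radial level $u$, the point of $\gamma$ at that level is either on some $C_i$ (so $i\in I(\gamma)$ and $u\in[\log a_i,\log b_i]$, an interval of length $l_i$), inside some $Q_i^\circ$ (forcing $\gamma$ to cross $C_i$, same conclusion), or in $T\setminus\bigcup_i C_i$. Writing $\pi(z):=\log|z|$, this yields
$$\log(R/r)\;\le\;\sum_{i\in I(\gamma)} l_i \;+\; \bigl|\pi(\gamma\cap T)\bigr|.$$
Because $T$ has planar measure zero, the functions $\rho_n := n\chi_T$ are admissible (with zero $L^2$-mass) for the family $\{\gamma:\mathcal{H}^1(\gamma\cap T)\ge 1/n\}$; countable subadditivity of conformal modulus then forces $\mathcal{H}^1(\gamma\cap T)=0$ for almost every $\gamma$, and the $1$-Lipschitz continuity of $\pi$ in the cylindrical metric makes the last term vanish. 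Hence $\rho_0$ is admissible, with
$$\sum_i \rho_0(C_i)^2 \;=\; \frac{\sum_i l_i^2}{\log^2(R/r)} \;=\; \frac{\textup{Area}_{\textup{cyl}}(A)}{\log^2(R/r)} \;=\; \frac{2\pi}{\log(R/r)},$$
proving (1) and establishing the upper bound $\textup{mod}_T(\Gamma(C_1,C_2))\le 2\pi/\log(R/r)$.

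\textbf{Step 3 (Extremality via Cauchy-Schwarz).} For an arbitrary admissible $\rho$ we may take $\rho(C_1)=\rho(C_2)=0$ without increasing the mass, since open paths in $\Gamma(C_1,C_2)$ do not meet these circles. Specializing admissibility to the radial segments $L_\theta=\{\varrho e^{i\theta}:r<\varrho<R\}$ gives $\sum_{i:L_\theta\cap C_i\neq\emptyset} \rho(C_i)\ge 1$ for a.e. $\theta\in[0,2\pi)$, the exceptional set $E$ necessarily having Lebesgue measure zero because the subfamily $\{L_\theta\}_{\theta\in E}$ has conformal modulus equal to $|E|/\log(R/r)$. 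Using $|\{\theta:L_\theta\cap C_i\neq\emptyset\}|=\beta_i-\alpha_i=l_i$, integration in $\theta$ produces $\sum_i\rho(C_i)\,l_i\ge 2\pi$. Combined with $\sum_i l_i^2 = 2\pi\log(R/r)$ and Cauchy-Schwarz,
$$\sum_i \rho(C_i)^2 \;\ge\; \frac{\bigl(\sum_i \rho(C_i)\,l_i\bigr)^2}{\sum_i l_i^2} \;\ge\; \frac{(2\pi)^2}{2\pi\log(R/r)} \;=\; \frac{2\pi}{\log(R/r)},$$
which matches $\textup{mass}(\rho_0)$; so $\rho_0$ is extremal. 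Uniqueness then follows either from Proposition \ref{extremal} or from the equality case in Cauchy-Schwarz, which forces $\rho(C_i)\propto l_i$.

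\textbf{Main obstacle.} The most delicate piece is the Fuglede-type step in Step 2: upgrading the planar null-set property of $T$ to the statement $|\pi(\gamma\cap T)|=0$ for \emph{almost every} $\gamma\in\Gamma(C_1,C_2)$. Once this is in place, everything else is bookkeeping in the flat cylinder geometry together with a single application of Cauchy-Schwarz.
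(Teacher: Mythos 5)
Your argument is correct, but note that the paper itself offers no proof of this proposition: it is quoted verbatim as Corollary~12.2 of \cite{Bo1} (Bonk's uniformization paper), so there is nothing internal to compare against line by line. What you have written is a self-contained reconstruction of the standard length--area argument in the flat cylinder, and all the essential points check out: the transfer to $T$ via Proposition~\ref{quasiconformal_extend} and Lemma~\ref{lem:invariant} is legitimate; the covering $(\log r,\log R)\subseteq\bigcup_{i\in I(\gamma)}[\log a_i,\log b_i]\cup\pi(\gamma\cap T)$ is valid because a connected path joining the two boundary components that enters $Q_i^{\circ}$ must cross $C_i$; your Fuglede-type step correctly disposes of the term $|\pi(\gamma\cap T)|$ using $\sigma(T)=0$ and the $1$-Lipschitz property of $\pi$; the identity $\sum_i l_i^2=2\pi\log(R/r)$ again uses that $T$ is null; and the lower bound via radial segments is sound, since a subfamily $\{L_\theta\}_{\theta\in E}$ with $|E|>0$ has positive conformal modulus and so cannot sit inside the exceptional family. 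The value $2\pi/\log(R/r)$ obtained on both sides pins down extremality, and uniqueness follows from Proposition~\ref{extremal} (or from strict convexity in the equality case of Cauchy--Schwarz). Two small points you should make explicit: for part (1) the existence of the uniformizing map $f$ is not a hypothesis but must be invoked from the square-carpet uniformization theorem quoted in Section~3.4, which is where the assumptions of uniform relative separation and uniform quasicircles enter; and the bijection of admissible mass distributions in Step~1 silently uses that $F$ and $F^{-1}$ carry conformal-modulus-zero path families to conformal-modulus-zero path families, which is exactly the content of the quasiconformal distortion estimate (2.4) underlying Lemma~\ref{lem:invariant}. With those remarks added, your proof is a complete substitute for the external citation.
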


This is (\cite{Bo1}, Corollary 12.2).

Let $S$ be a carpet in a closed Jordan region $D\subset\hat{\mathbb{C}}$. $S$ is called $square\ carpet$ if
$\partial D$ is a peripheral circle of $S$ and all other peripheral circles are squares with sides parallel to the coordinate
axes.
\begin{thm}\label{thm_square_rigidity}
Let $S$ and $\widetilde{S}$ be square carpets of measure zero in rectangles $K=[0,a]\times[0,1]\subseteq\mathbb{R}^2$ and $\widetilde{K}=[0,\widetilde{a}]\times[0,1]\subseteq\mathbb{R}^2$, respectively, where $a$, $\widetilde{a}>0$. If $f$ is an orientation-preserving quasisymmetric homeomorphism form $S$ onto $\widetilde{S}$ that takes the corners of $K$ to the corners of $\widetilde{K}$ with $f(0)=0$. Then $a=\widetilde{a}$, $S=\widetilde{S}$, and $f$ is the identity on $S$.
\end{thm}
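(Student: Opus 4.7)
The plan is to combine the quasiconformal invariance of carpet modulus (Lemma \ref{lem:invariant}) with the uniqueness of the extremal mass distribution (Proposition \ref{extremal}). As a first step, I would extend $f$ to a quasiconformal self-map $F$ of $\mathbb{S}^2$ via Proposition \ref{quasiconformal_extend}. Because $f$ is orientation-preserving, takes the four corners of $K$ to those of $\widetilde{K}$, and sends $0$ to $0$, the boundary correspondence forces $F$ to send each of the four sides of $\partial K$ to the corresponding side of $\partial \widetilde{K}$ (left to left, bottom to bottom, etc.); in particular $f(\partial K) = \partial \widetilde{K}$ setwise.

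For the main modulus computation, consider the family $\Gamma_h$ of open paths in $K\setminus\bigcup_i\mathrm{int}(Q_i)$ joining the left edge $L=\{0\}\times[0,1]$ to the right edge $R=\{a\}\times[0,1]$. I would prove $\mathrm{mod}_S(\Gamma_h) = 1/a$: the distribution $\rho(\partial K)=0$, $\rho(\partial Q)=s(Q)/a$ is admissible and has total mass $\sum_Q s(Q)^2/a^2 = \mathrm{area}(K)/a^2 = 1/a$ thanks to the measure-zero hypothesis, while a Cauchy--Schwarz argument integrating admissibility along horizontal slices supplies the matching lower bound. The identical computation on $\widetilde{S}$ gives $\mathrm{mod}_{\widetilde{S}}(\widetilde{\Gamma}_h) = 1/\widetilde{a}$, and invariance of carpet modulus under $F$ (Lemma \ref{lem:invariant}) forces $a = \widetilde{a}$, so $K=\widetilde{K}$. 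By Proposition \ref{extremal} the extremal mass distribution for $\mathrm{mod}_S(\Gamma_h)$ is unique; comparing $\rho$ with the pull-back of the extremal $\widetilde{\rho}$ via $f$ yields $s(f(Q))=s(Q)$ for every peripheral square $Q$ of $S$.

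The main obstacle is promoting side-length preservation to position preservation. For each peripheral square $Q$, I would introduce refined path families such as $\Gamma(L,\partial Q)$ of paths from $L$ to $\partial Q$ inside $K\setminus\bigcup_i\mathrm{int}(Q_i)$ — and analogously from the bottom edge $B$ — whose carpet moduli are quasiconformal invariants depending monotonically on the coordinates of $Q$. Combined with the equality of side lengths and the uniqueness of the extremals, this should force $x_L(f(Q))=x_L(Q)$ and $y_B(f(Q))=y_B(Q)$, hence $f(Q)=Q$ as a set for every peripheral square. Once $\widetilde{S}=S$ and $f$ fixes every peripheral circle setwise, the Three-Circle Theorem (Corollary \ref{three_circle_theorem}(1)) applied to $\partial K$ together with any two distinct peripheral squares identifies $f$ with the identity on $S$. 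The delicate point is controlling $\mathrm{mod}_S(\Gamma(L,\partial Q))$ sharply enough to detect the position of $Q$: the clean identity $\mathrm{mod}_S(\Gamma_h)=1/a$ has no equally transparent analogue for these sub-families, so this final step demands the most care — possibly via iteration over many squares, or through reflecting $F$ across the sides of $K$ it already preserves, extending it to a quasiconformal self-map of $\mathbb{C}$ that permutes a rigid lattice of like-sized squares.
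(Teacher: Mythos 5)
First, a point of bookkeeping: the paper itself contains no proof of this statement --- it is imported verbatim as Theorem 1.4 of Bonk--Merenkov \cite{BM} (``This is (\cite{BM}, Theorem 1.4)''), so your proposal has to be measured against the argument given there. The first half of your sketch is correct and coincides with how that argument begins: the computation $\textup{mod}_S(\Gamma_h)=1/a$ is sound (the explicit distribution $\rho(\partial Q)=s(Q)/a$ is admissible because almost every path meets $S$ in a set of length zero, and the lower bound follows by integrating the admissibility inequality over horizontal slices, applying Cauchy--Schwarz, and using $\sum_Q s(Q)^2=\textup{area}(K)=a$); invariance of carpet modulus then gives $a=\widetilde{a}$, and uniqueness of the extremal mass distribution (Proposition \ref{extremal}) gives $s(f(Q))=s(Q)$ for every peripheral square. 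The concluding appeal to Corollary \ref{three_circle_theorem}(1), once every peripheral circle is known to be fixed setwise, is also fine.

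The genuine gap is exactly the step you flag yourself: promoting $s(f(Q))=s(Q)$ to $f(Q)=Q$. The families $\Gamma(L,\partial Q)$ you introduce come with no computable modulus and no proof of monotone dependence on the coordinates of $Q$; nothing in Proposition \ref{extremal_mass} or elsewhere evaluates them, so the assertion that they ``detect position'' is a hope rather than an argument, and the fallback of reflecting $F$ across the sides of $K$ does not by itself manufacture a rigid lattice that $F$ must permute. What \cite{BM} actually does at this juncture is quite different: it exploits the \emph{curves realizing equality} in the admissibility inequality for the unique extremal distribution. Because $S$ has measure zero, almost every horizontal segment $[0,a]\times\{y\}$ is such an extremal curve (the side lengths of the squares it crosses sum to exactly $a$), the image of almost every such segment must again be extremal for the image carpet, and a monotonicity and Fubini-type accounting of side lengths along these level curves --- run for both the horizontal and the vertical families --- pins down the position of every peripheral square. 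That analysis of extremal curves, not auxiliary moduli of one-sided path families, is the missing idea; without it the proof is incomplete precisely where the theorem's real content lies.
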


This is (\cite{BM}, Theorem 1.4). Here the expression square carpet $S$ in a rectangle $K$ means that a carpet $S\subset K$ so that $\partial K$ is a peripheral circle of $S$ and all other peripheral circles are squares with four sides parallel to the sides of $K$, respectively.

\section{Distinguished  peripheral circles}\label{sec_distinguished_pairs_of_peripheral_circles}
Let $n\geq 5, 1\leq p<\frac{n}{2}-1$ be integers.
 Let $F_{n,p}$ be the Sierpi\'nski carpet as we defined in the introduction.
 We endow $F_{n,p}$ with the Euclidean metric in $\mathbb{R}^2$. Since $F_{n,p}$ is
 a subset of $[0,1]\times [0,1]$, the Euclidean metric (measure) is comparable with the spherical metric (measure).

If $Q$ is a peripheral circle of $F_{n,p}$, we denote by $\ell_{Q}$ the Euclidean side length of $Q$. Denote by
$Q_0$ the unit square $[0,1]\times [0,1]$.

\begin{lem}\label{F_pq_carpet_measure_zero_quasicircles_separated}
The carpet $F_{n,p}$ is of measure zero. The peripheral circles of $F_{n,p}$ are uniform quasicircles and uniformly relatively separated.
\end{lem}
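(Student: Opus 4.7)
The plan is to verify the three claims in turn. By counting at each stage, $Q^{(k)}_{n,p}$ is a disjoint union of $(n^2-4)^k$ closed axis-parallel squares of side $1/n^k$ and thus has Lebesgue area $((n^2-4)/n^2)^k\to 0$, so $F_{n,p}$ has Lebesgue measure zero and (since the Euclidean and spherical metrics are bi-Lipschitz equivalent on $Q_0=[0,1]^2$) also spherical measure zero. For the uniform quasicircle property, observe that every peripheral circle of $F_{n,p}$ is either $\partial Q_0$ or the boundary of a removed subsquare, hence an axis-parallel Euclidean square; all squares are related by Euclidean similarities, which preserve quasisymmetric gauges, so they are images of $\mathbb{S}^1$ under $\eta$-quasisymmetric maps for a common $\eta$.

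The substantive claim is the uniform relative separation. My plan is to exploit self-similarity so that it suffices to verify an explicit inequality at stage $1$. Given distinct peripheral circles $C_i,C_j$, let $k\ge 0$ be the largest integer such that both lie in $\overline R$ for some stage-$k$ surviving subsquare $R$ (so $R=Q_0$ when $k=0$). When $R$ is subdivided into its $n^2$ equal subsquares at stage $k+1$, four are removed, producing four new peripheral circles. By the maximality of $k$, each of $C_i,C_j$ must either (a) be the boundary of a distinct stage-$(k+1)$ removed cell, (b) lie strictly inside a distinct stage-$(k+1)$ surviving cell, or (c) equal $\partial R=\partial Q_0$ (possible only when $k=0$). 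A direct computation using the prescribed corner distance $\sqrt 2\, p/n$ shows that for any two of the five distinguished squares in the stage-$1$ configuration of $Q_0$,
\[
\frac{\mathrm{dist}(E,E')}{\min\{\mathrm{diam}(E),\mathrm{diam}(E')\}}\ \geq\ \frac{\min(p,\,n-2p-2)}{\sqrt 2}\ \geq\ \frac{1}{\sqrt 2},
\]
using $p\ge 1$ and, from $p<n/2-1$, $n-2p-2\ge 1$.

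It remains to promote this to a bound on $\Delta(C_i,C_j)$. If both $C_i,C_j$ are stage-$(k+1)$ removed-cell boundaries of $R$, the inequality applied to the rescaling of $R$ onto $Q_0$ gives the bound directly. Otherwise some $C_i$ is strictly interior to a surviving subcell $X_i$; since $\overline{X_j}\cap\mathrm{int}(X_i)=\emptyset$, any segment from $C_i$ to $C_j$ must cross $\partial X_i$, so $\mathrm{dist}(C_i,C_j)\ge \mathrm{dist}(C_i,\partial X_i)$. Rescaling $X_i$ onto $Q_0$ reduces this to bounding $\mathrm{dist}(C,\partial Q_0)/\mathrm{diam}(C)$ for some peripheral circle $C\neq\partial Q_0$ of $F_{n,p}$: letting $R'$ be the stage-$(l-1)$ surviving subsquare containing the stage-$l$ removed square bounded by $C$, the stage-$1$ inequality applied inside $R'$ gives $\mathrm{dist}(C,\partial R')\ge (p/\sqrt 2)\mathrm{diam}(C)$, and the nesting $R'\subset Q_0$ gives $\mathrm{dist}(C,\partial Q_0)\ge \mathrm{dist}(C,\partial R')$. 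Combining the cases yields $\Delta(C_i,C_j)\ge \min(p,n-2p-2)/\sqrt 2>0$ uniformly. The main point to watch is precisely this self-similar reduction, in particular the elementary observation that $\mathrm{dist}(C,\partial Q_0)\ge \mathrm{dist}(C,\partial R')$ for any nested surviving $R'$, which is what lets a single finite stage-$1$ estimate suffice for peripheral circles of arbitrary depth.
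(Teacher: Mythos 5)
Your proposal is correct and follows essentially the same route as the paper: measure zero from the area count of $Q^{(k)}_{n,p}$ (the paper phrases this via the Hausdorff dimension $\log(n^2-4)/\log n<2$), uniform quasicircles because all peripheral circles are squares related by Euclidean similarities, and uniform relative separation via the inequality $\mathrm{dist}(C_1,C_2)\geq \min\{\ell(C_1),\ell(C_2)\}$, which the paper simply asserts. Your self-similar reduction to the stage-$1$ configuration, using $p\geq 1$ and $n-2p-2\geq 1$ together with the nesting observation $\mathrm{dist}(C,\partial Q_0)\geq\mathrm{dist}(C,\partial R')$, is precisely the justification the paper leaves implicit, so your write-up is a complete version of the paper's argument rather than a different one.
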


\begin{proof}
It follows from the construction that $F_{n,p}$ is a carpet of Hausdorff dimension
$$\log (n^2-4)/\log n<2.$$ So the measure of $F_{n,p}$ is equal to zero.

Since each peripheral circle of $F_{n,p}$ can be mapped to the boundary of
$Q_0$ by a Euclidean similarity,
 the peripheral circles of $F_{n,p}$ are uniform quasicircles.

At last, the peripheral circles of $F_{n,p}$ are uniformly relatively separated in the Euclidean metric. Indeed, consider
any two distinct peripheral circles  $C_1, C_2$ of $F_{n,p}$. The Euclidean distance between $C_1$ and $C_2$
satisfies
\begin{eqnarray*}
\textup{dist}(C_1,C_2)&\geq& \min\{\ell(C_1), \ell(C_2)\} \\
&=& \frac{1}{\sqrt{2}}\min\{\mathrm{diam}(C_1), \mathrm{diam} (C_2)\}.
\end{eqnarray*}
\end{proof}

\subsection{Distinguished pairs of non-adjacent peripheral circles}
We denote by $O$ the boundary of the unit square $Q_0$.
In the first step of the inductive construction of $F_{n,p}$, there are four squares $Q_1, Q_2,Q_3,Q_4$ of side-length $\frac{1}{n}$, i.e., the lower left, lower right, upper right and upper left squares respectively, removed from $Q_0$. We denote by $M_i, i=1, \cdots, 4$ the boundary of $Q_i, i=1, \cdots, 4$, respectively.

In the following discussions, we call $O$ the \emph{ outer circle} of $F_{n,p}$ and $M_i, i=1, \cdots, 4$ the
\emph{inner circles} of $F_{n,p}$.
We say that two disjoint peripheral circles $C,C'$ are \emph{adjacent} if there exists
a copy $F$ of $F_{n,p}$ (here $F\subset F_{n,p}$ can be considered as a carpet scaled from $F_{n,p}$ by some factor $1/n^k$)
such that $C,C'$ are inner circles of $F$.
For example, two distinct inner circles $M_i$ and $M_j$ are adjacent.
 Two disjoint peripheral circles $C,C'$ which are not adjacent are called
\emph{non-adjacent}.

\begin{lem}\label{interchange}
Let $\{C,C'\}$ be any pair of non-adjacent distinct peripheral circles of $F_{n,p}$. Then
$$\mathrm{\mod}_{F_{n,p}}(\Gamma(C,C'))\leq \mathrm{\mod}_{F_{n,p}}(\Gamma(O, M)).$$
 Moreover, the equality holds if and only if $\{C,C'\}=\{O,M\}$ for some inner circle $M=M_i$.
\end{lem}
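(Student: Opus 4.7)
Both moduli in the inequality are finite and positive by Proposition~\ref{extremal_mass}(1), and each admits a unique extremal admissible mass distribution; this uses that the peripheral circles of $F_{n,p}$ are uniformly relatively separated uniform quasicircles, established in Lemma~\ref{F_pq_carpet_measure_zero_quasicircles_separated}. Denote by $\rho_{0}$ the unique extremal distribution for $\textup{mod}_{F_{n,p}}(\Gamma(O,M_{1}))$. By Proposition~\ref{extremal_mass}(2), $\rho_{0}$ vanishes on $O$ and on $M_{1}$ but is strictly positive on every other peripheral circle, and by the fourfold symmetry of $F_{n,p}$ the value $\textup{mod}_{F_{n,p}}(\Gamma(O,M_{i}))$ does not depend on $i\in\{1,2,3,4\}$.

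The inequality is obtained by a self-similar reduction based on the monotonicity~\eqref{equ:modulus3}. Given a non-adjacent pair $\{C,C'\}\neq\{O,M_{i}\}$, I first select a surviving sub-cell $K=\psi(Q_{0})$ of $F_{n,p}$ at some level of the construction, with $\psi$ an orientation-preserving Euclidean similarity, so that, after possibly relabelling $C,C'$, one has $\overline{D}_{C'}$ strictly inside $\overline{K}$ and $\overline{D}_{C}$ disjoint from the interior of $K$. Existence of such a $K$ follows from the structure of $F_{n,p}$: take the smallest sub-copy of $F_{n,p}$ containing $\overline{D}_{C}\cup\overline{D}_{C'}$ (or $F_{n,p}$ itself when $C=O$), and use non-adjacency together with $\{C,C'\}\neq\{O,M_{i}\}$ to place $\overline{D}_{C'}$ inside a surviving first-generation sub-cell of this sub-copy while $\overline{D}_{C}$ falls into a distinct cell. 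Every path in $\Gamma(C,C')$ then crosses the square $\partial K$ and contains a subpath in $\overline{K}$ joining $\partial K$ to $C'$. Writing $\Gamma_{K}$ for this family of subpaths,
\[\textup{mod}_{F_{n,p}}(\Gamma(C,C'))\leq\textup{mod}_{F_{n,p}}(\Gamma_{K})\]
by~\eqref{equ:modulus3}. Since $\psi^{-1}\colon K\to Q_{0}$ is a Euclidean similarity mapping the peripheral circles of $F_{n,p}$ inside $\overline{K}$ bijectively onto those inside $\overline{Q}_{0}$ and transporting $\Gamma_{K}$ onto $\Gamma(O,\psi^{-1}(C'))$, admissible distributions for either family may be taken supported on the circles in the image, giving $\textup{mod}_{F_{n,p}}(\Gamma_{K})=\textup{mod}_{F_{n,p}}(\Gamma(O,\psi^{-1}(C')))$. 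Iterating this reduction finitely many times, the depth of the second circle drops by one at each step and terminates at a pair $\{O,M_{j}\}$, yielding the required inequality.

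The strict inequality, which I regard as the main obstacle, I plan to extract at the very first reduction step by means of the uniqueness of the extremal distribution. Suppose equality held in
\[\textup{mod}_{F_{n,p}}(\Gamma(C,C'))=\textup{mod}_{F_{n,p}}(\Gamma_{K}).\]
The unique extremal distribution for $\Gamma_{K}$, which may be taken supported on peripheral circles inside $\overline{K}$, extends by zero outside $\overline{K}$ to a distribution admissible for $\Gamma(C,C')$ of the same total mass; under the assumed equality this extension would be extremal for $\Gamma(C,C')$ as well. Proposition~\ref{extremal_mass}(2) applied to $\{C,C'\}$ shows, however, that the true extremal for $\Gamma(C,C')$ is strictly positive on every peripheral circle other than $C$ and $C'$, and in particular on some $M_{j}\notin\{C,C'\}$ that is not contained in $\overline{K}$; such an $M_{j}$ exists because no $M_{i}$ is a subset of a proper surviving sub-cell of $F_{n,p}$, while at most two of $M_{1},\ldots,M_{4}$ can coincide with $C$ or $C'$. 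The two extremal distributions for $\Gamma(C,C')$ therefore differ, contradicting the uniqueness in Proposition~\ref{extremal_mass}. Hence the first reduction step is strict, and the chain of inequalities above gives $\textup{mod}_{F_{n,p}}(\Gamma(C,C'))<\textup{mod}_{F_{n,p}}(\Gamma(O,M_{1}))$ whenever $\{C,C'\}\neq\{O,M_{i}\}$, completing the proof.
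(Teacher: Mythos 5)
Your proof is correct and follows essentially the same strategy as the paper's: an overflowing/self-similarity reduction of $\Gamma(C,C')$ to $\Gamma(O,M_i)$, with strictness extracted from the uniqueness of the extremal mass distribution (Propositions \ref{extremal} and \ref{extremal_mass}) together with the observation that the transplanted extremal vanishes on peripheral circles (e.g.\ some $M_j$ outside the sub-cell) where the true extremal for $\Gamma(C,C')$ must be strictly positive. The only cosmetic difference is that you iterate the reduction by localizing around the deeper circle $C'$ one generation at a time, whereas the paper performs a single step by choosing the sub-copy $F$ for which the smaller circle $C$ is an inner circle, so that $\Gamma(C,C_0)$ is immediately similar to $\Gamma(M_1,O)$.
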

\begin{proof}
Assume that $\{C,C'\}\neq \{O, M\}$ for any inner circle $M$.
By Lemma $\ref{F_pq_carpet_measure_zero_quasicircles_separated}$ and Proposition ($\ref{extremal_mass}$), $\textup{mod}_{F_{p,q}}(\Gamma
(C,C'))$ is a finite and positive number.
Without loss of generality we may assume that $\ell(C)=1/n^m\leq \ell(C')$.
Note that there exists a copy $F\subset F_{n,p}$, rescaled from $F_{n,p}$ by a factor $1/n^{m-1}$,
so that $C$ corresponds to some inner circle, say, $M_1$ of $F_{n,p}$.

Denote the outer circle of $F$ by $C_0$. Since $C$ and $C'$ are disjoint and $\ell(C)\leq \ell(C')$,
$C'$ is disjoint with the interior region of $C_0$.
Hence every path in $\Gamma(C,C')$ must intersect with $C_0$ and
then contains a sub-path in $\Gamma(C,C_0)$.
See Figure \ref{fig:carpet} for an illustration.
\begin{figure}[!hbp]
\centering
\includegraphics[width=0.78\linewidth]{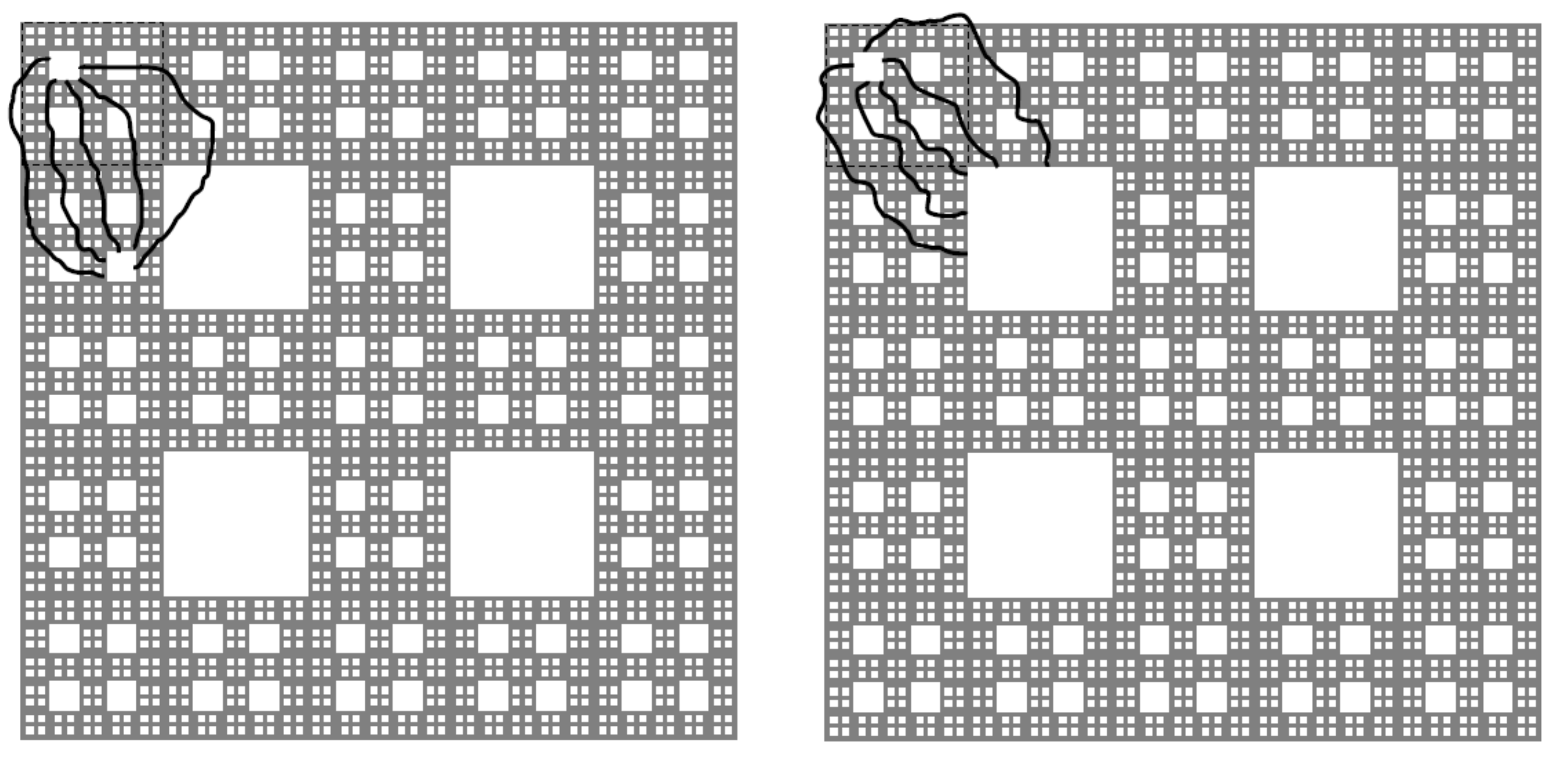}
\caption{The non-adjacent distinct peripheral circles}
\label{fig:carpet}
\end{figure}

 Therefore
\begin{equation}\label{similarity}
\textup{mod}_{F_{n,p}}(\Gamma(C,C'))\leq\textup{mod}_{F_{n,p}}(\Gamma(C,C_0)).
\end{equation}
On the other hand, since every path in $\Gamma(C,C_0)$ meets exactly the same peripheral circles
of $F$ and $F_{n,p}$,
we have  $$\textup{mod}_{F_{n,p}}(\Gamma(C,C_0))=\textup{mod}_{F}(\Gamma(C,C_0)).$$
Moreover, by the similarity of $F_{n,p}$ and $F$,
$$\textup{mod}_{F}(\Gamma(C,C_0))=\textup{mod}_{F_{n,p}}(\Gamma(M,O)).$$
It follows that
$$\mathrm{\mod}_{F_{n,p}}(\Gamma(C,C'))\leq \mathrm{\mod}_{F_{n,p}}(\Gamma(M_1,O)).$$

We next show that the equality case in (\ref{similarity}) cannot happen. We argue by contraction. Assume that
$$\textup{mod}_{F_{n,p}}(\Gamma(C,C'))=\textup{mod}_{F_{n,p}}(\Gamma(C,C_0)).$$
Note that all carpet modulus considered above are finite by Proposition $\ref{extremal_mass}$ and so there exist
unique extremal mass distributions, say $\rho$ and $\rho'$, for $\textup{mod}_{F_{n,p}}(\Gamma(C,C'))$ and $\textup{mod}_{F_{n,p}}(\Gamma(O,M_1))$, respectively,
by Proposition $\ref{extremal}$.

Let $\mathcal{C}$ be the set of all peripheral circles of $F_{n,p}$.
According to the description in Proposition $\ref{extremal_mass}$, $\rho$ and $\rho'$
are supported on $\mathcal{C}\setminus\{C,C'\}$ and $\mathcal{C}\setminus\{O,M_1\}$, respectively.

By transplanting $\rho'$ to the carpet $F$
using a suitable Euclidean similarity between $F$ and $F_{n,p}$,
we get an admissible mass distribution $\widetilde{\rho}$ for $F$ supported only on the set
of peripheral circles of $F$ except $C$ and $C_0$.
 Note that the total mass of $\widetilde{\rho}$ is the same as $\textup{mass}(\rho')$.

 We extend
$C\rightarrow\widetilde{\rho}(C)$ by zero if $C$ belonging to $\mathcal{C}$ does not intersect the interior region of $C_0$.
Then $\widetilde{\rho}$ is an admissible mass
distribution for $\textup{mod}_{F_{n,p}}(\Gamma(C,C_0))$, thus for $\textup{mod}_{F_{n,p}}(\Gamma(C,C'))$ as well. However, $\widetilde{\rho}\neq\rho$
and $\textup{mass}(\widetilde{\rho})=\textup{mod}_{F_{n,p}}(\Gamma(C,C'))$, we arrive at a contradiction by Proposition $\ref{extremal}$.

In summary, we get the following crucial inequality:
\begin{equation}\label{crucial_inequality}
\textup{mod}_{F_{n,p}}(\Gamma(C,C'))<\textup{mod}_{F_{n,p}}(\Gamma(O,M_1))
\end{equation}
where $\{C,C'\}\neq\{O,M_i\}\, i=1,2,3,4$ and non-adjacent. So the lemma follows.
\end{proof}

\begin{cor}\label{qs_self_maps}
Let $f$ be a quasisymmetric self-map of $F_{n,p}$. Then $$f(\{O,M_1,M_2,M_3,M_4\})=\{O,M_1,M_2,M_3,M_4\}.$$
\end{cor}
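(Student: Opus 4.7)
The plan is to use the quasiconformal extension together with the modulus rigidity supplied by Lemma \ref{interchange}. First, by Proposition \ref{quasiconformal_extend}, the quasisymmetric self-map $f$ extends to a quasiconformal self-map $F$ of $\mathbb{S}^2$; in particular $F$ permutes the complementary Jordan regions of $F_{n,p}$, so $f$ induces a bijection on the set of peripheral circles. By Lemma \ref{lem:invariant}, for any two distinct peripheral circles $C, C'$ we have $\textup{mod}_{F_{n,p}}(\Gamma(C, C')) = \textup{mod}_{F_{n,p}}(\Gamma(f(C), f(C')))$. Using the Euclidean symmetries of $F_{n,p}$, the value $m := \textup{mod}_{F_{n,p}}(\Gamma(O, M_i))$ does not depend on $i$; hence each of the four image pairs $\{f(O), f(M_i)\}$ also has modulus $m$.

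The next step is to show $f(O) \in \{O, M_1, M_2, M_3, M_4\}$. Suppose for contradiction that $f(O)$ is not in this set. Then for every $i$, the pair $\{f(O), f(M_i)\}$ is not of the form $\{O, M_j\}$, so by Lemma \ref{interchange} this pair must be adjacent (otherwise its modulus would be strictly less than $m$). By definition of adjacency, there is a copy $F^{(i)}$ of $F_{n,p}$ of which both $f(O)$ and $f(M_i)$ are inner circles. The copy containing a given peripheral circle $C \neq O$ as an inner circle is unique, as it is determined by the level at which $C$ appears in the inductive construction, so the $F^{(i)}$ all coincide with a single copy $F'$. Consequently $f(O), f(M_1), f(M_2), f(M_3), f(M_4)$ are five distinct inner circles of $F'$, contradicting the fact that every copy of $F_{n,p}$ has exactly four inner circles. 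So some image pair $\{f(O), f(M_i)\}$ is non-adjacent, and Lemma \ref{interchange} then forces it to equal $\{O, M_j\}$, yielding $f(O) \in \{O, M_1, M_2, M_3, M_4\}$.

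Given $f(O) \in \{O, M_1, M_2, M_3, M_4\}$, the same adjacent/non-adjacent dichotomy, applied to each pair $\{f(O), f(M_i)\}$, places each $f(M_i)$ in $\{O, M_1, M_2, M_3, M_4\}$. Indeed, if $f(O) = O$, adjacency with $O$ is impossible because $O$ is not an inner circle of any copy, so Lemma \ref{interchange} gives $f(M_i) \in \{M_1, M_2, M_3, M_4\}$; if $f(O) = M_j$, then adjacency forces both $M_j$ and $f(M_i)$ to be inner circles of $F_{n,p}$ itself (the unique copy containing $M_j$ as an inner circle), while non-adjacency forces $f(M_i) = O$. Either way $f(M_i) \in \{O, M_1, M_2, M_3, M_4\}$. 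Since $f$ acts as a bijection on the full set of peripheral circles and sends the five-element set $\{O, M_1, M_2, M_3, M_4\}$ into itself, this inclusion must be an equality.

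The main obstacle the argument has to negotiate is that Lemma \ref{interchange} characterizes the modulus maximizers only among non-adjacent pairs, saying nothing directly about adjacent ones. Rather than estimating the modulus of adjacent pairs, the plan eliminates the adjacent case combinatorially, using that each copy of $F_{n,p}$ has only four inner circle slots together with the uniqueness of the copy realizing a given peripheral circle as an inner circle.
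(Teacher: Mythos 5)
Your proposal is correct and uses essentially the same ingredients as the paper's proof: the quasiconformal extension, the invariance of carpet modulus, the equality case of Lemma \ref{interchange} for non-adjacent pairs, and the combinatorial fact that a single copy of $F_{n,p}$ has only four inner circles (so five distinct images cannot all be inner circles of one copy). The only difference is organizational --- you first pin down $f(O)$ and then treat each $f(M_i)$, whereas the paper argues by contradiction on the image of the pair $\{O,M_1\}$ and splits into the adjacent/non-adjacent cases directly --- but the substance is the same.
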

\begin{proof}
We argue by contraction. Assume that $f$ maps $\{O, M_1\}$ to some pair of peripheral circles $\{C,C'\}\nsubseteq\{O,M_1,M_2,M_3,M_4\}$
and $f(O)=C$. By Proposition
$\ref{quasiconformal_extend}$, $f$ extends to a quasiconformal homeomorphism on $\mathcal{S}^2$. In particular, $\Gamma(C,C')$=$f(\Gamma(O,M_1))$. Then Lemma \ref{lem:invariant} implies $$\textup{mod}_{F_{n,p}}(\Gamma(O,M_1))=\textup{mod}_{F_{n,p}}(\Gamma(C,C')).$$
We distinguish the argument into two cases depending on the type of the squares $C$ and $C'$, i.e., whether they are adjacent or not.

Case $(1)$: $C,C'$ are non-adjacent. This is only possible if $\{C,C'\}\subseteq\{O,M_1,M_2,M_3,M_4\}$ by Lemma $\ref{interchange}$.
Then we get a contradiction.

Case $(2)$: $C,C'$ are adjacent. Suppose $C,C'$ are inner circles of some copy $F\subset F_{n,p}$.
Consider $f(M_i)$, $i=2,3,4$. They must be inner circles of $F$ as well. Otherwise, for example,
suppose that $f(M_2)$ is not an inner circle of $F$. Since $C$ and $f(M_2)$ are non-adjacent, we can apply  Lemma \ref{interchange} to
show that
$$\textup{mod}_{F_{n,p}}(\Gamma(C,f(M_2)))<\textup{mod}_{F_{n,p}}(\Gamma(O,M_1)),$$
which is contradicted with the fact that
$$\textup{mod}_{F_{n,p}}(\Gamma(C,f(M_2)))=\textup{mod}_{F_{n,p}}(\Gamma(O,M_2))= \textup{mod}_{F_{n,p}}(\Gamma(O,M_1)).$$
As a result, $\{f(O), f(M_1),f(M_2), f(M_3),f(M_4))\}$ are pairwise adjacent and all of them are
inner circles of $F$. However, $F$ contains exactly four inner circles. So Case (2) can not happen.

By  the same argument to pairs $O$ and $M_i$, $i=2,3,4$, the corollary follows.
\end{proof}

\subsection{Quasisymmetric group $\mathrm{QS}(F_{n,p})$ is finite}

Let $H$ denote the Euclidean isometry group which consists of eight elements: four of them rotate around the center by $\pi/2,\pi,3\pi/2$,
and $2\pi$, respectively; the others are orientation-reserving and reflecting by lines $x=0$, $x=y$, $y=0$ and $x+y=0$, respectively.
It is obvious that $H$ is contained in $\mathrm{QS}(F_{n,p})$.

\begin{cor}\label{coro:finite}
Let $5\leq n, 1\leq p<\frac{n}{2}-1$ be integers. Then the group $\mathrm{QS}(F_{n,p})$ of quasisymmetric self-maps of $F_{n,p}$ is finite.
\end{cor}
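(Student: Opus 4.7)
The plan is to exploit the fact that every $f\in\mathrm{QS}(F_{n,p})$ permutes the five-element set $\{O,M_1,M_2,M_3,M_4\}$ (Corollary \ref{qs_self_maps}) together with the rigidity supplied by the Three-Circle Theorem (Corollary \ref{three_circle_theorem}).

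First I would introduce the permutation-action homomorphism
\[
\Phi\colon\mathrm{QS}(F_{n,p})\longrightarrow\mathrm{Sym}\{O,M_1,M_2,M_3,M_4\}\cong S_5,\qquad \Phi(f)(C)=f(C),
\]
which is well-defined by Corollary \ref{qs_self_maps} and is plainly a group homomorphism. Its image therefore has cardinality at most $5!=120$, so it suffices to prove that the kernel $K:=\ker\Phi$ is finite.

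By Lemma \ref{F_pq_carpet_measure_zero_quasicircles_separated}, $F_{n,p}$ has measure zero and its peripheral circles are uniformly relatively separated uniform quasicircles, so the hypotheses of Corollary \ref{three_circle_theorem} are met. Any orientation-preserving $f\in K$ fixes the three distinct peripheral circles $O,M_1,M_2$, and applying Corollary \ref{three_circle_theorem}(1) with $g=\mathrm{id}_{F_{n,p}}$ forces $f=\mathrm{id}$; hence the subset $K^{+}\subseteq K$ of orientation-preserving elements is trivial. If two elements $f_1,f_2\in K$ are both orientation-reversing, then $f_1\circ f_2^{-1}\in K^{+}=\{\mathrm{id}\}$, whence $f_1=f_2$. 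Therefore $K$ contains at most one orientation-reversing element, giving $|K|\le 2$.

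Combining the two bounds yields $|\mathrm{QS}(F_{n,p})|=|K|\cdot|\mathrm{im}\,\Phi|\le 240$, which is the desired conclusion. The conceptual work has already been carried out in establishing Corollary \ref{qs_self_maps}; the only subtlety here is the treatment of orientation-reversing maps, and that is dispatched by the standard device of composing two such maps to obtain an orientation-preserving element of the kernel, to which the Three-Circle rigidity applies directly.
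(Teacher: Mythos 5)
Your proof is correct and rests on exactly the same two ingredients as the paper's: the permutation of the five distinguished circles $\{O,M_1,\dots,M_4\}$ from Corollary \ref{qs_self_maps}, and the rigidity of Corollary \ref{three_circle_theorem}(1) applied to orientation-preserving maps, with orientation-reversing maps handled by composing two of them. The paper phrases the count as ``the orientation-preserving subgroup is finite and has index two'' rather than via the kernel of the homomorphism to $S_5$, but this is only a cosmetic difference in bookkeeping.
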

\begin{proof}
According to Corollary $\ref{qs_self_maps}$, $\{O,M_1,M_2,M_3,M_4\}$ are preserved under every quasisymmetric self-map of $F_{n,p}$. The group $G$
of all orientation-preserving quasisymmetric self-maps of $F_{n,p}$ is finite by the proof of Case (1) in Corollary ($\ref{three_circle_theorem}$).
Since $G$ is a subgroup of $\textup{QS}(F_{p,q})$ with index two, $\mathrm{QS}(F_{p,q})$ is finite.
\end{proof}

\subsection{Proof of Theorem 1}

Recall that the standard carpet $S_m, m\geq 3$ odd, is obtained by subdivide $[0,1]\times[0,1]$ into $m^2$ subsquares of equal size, removing the interior of the middle square, and repeating
these operations to every subsquares, inductively.
\begin{proof}[Proof of Theorem \ref{thm:equivalent}]
Let $\mathcal{M}, \mathcal{O}$ be the inner circle and outer circle of $S_m$ respectively.
Lemma 5.1 of  \cite{BM} states that $\textup{mod}_{S_m}(\Gamma(\mathcal{O},\mathcal{M}))$ is strictly larger than the carpet modulus of any other path family $\Gamma({C},{C}')$
with respect to $S_m$, where ${C}$ and ${C}'$ are peripheral circles of $S_m$. While for carpet $F_{n,p}$, according
to the symmetry, at least two pairs of peripheral circles  the maximum of $\{\textup{mod}_{F_{n,p}}\Gamma(C_1,C_2): C_1,C_2\in\mathcal{C}\}$.
Since any quasisymmetric maps from $F_{n,p}$ to $S_m$ must preserve such a maximum property, there is no such quasisymmetric map.
\end{proof}

\section{Weak tangent spaces}\label{sec_weak_tangent_spaces}
The results in this section generalize the discussion in (\cite{BM}, Section 7).

At first, we explain the definition of weak tangent of
a carpet. Then we show that a quasisymmetric map between two carpets $F_{n,p}$ induces
a quasisymmetric map between weak tangents.

\subsection{Weak tangents} In general, the \emph{weak tangents} of a metric space $M$ at a point $p\in M$
can be defined as the Gromov-Hausdorff limits of the pointed metric spaces
$$\lim_{\lambda\to \infty} (\lambda M, p)$$
where $\lambda M$  is the same set of points with $M$ equipped with the original
metric multiplied by $\lambda$.
If the limit is unique up to multiplied by positive constants, then the weak tangents
is usually called the \emph{tangent cone} of $M$ at $p$.

In the following, as in \cite{BM}, we will use a suitable definition of weak tangents
for subsets of $\mathbb{S}^2$ equipped with the spherical metric.

Suppose that  $a,b\in\mathbb{C},a\neq0$ and $M\subseteq\widehat{\mathbb{C}}$.
 We denote by $$aM+b:=\{az+b:z\in M\}.$$ Let $A$ be a subset of $\widehat{
\mathbb{C}}$ with a distinguished point $z_0\in A$, $z_0\neq\infty$.
We say that a closed set $W_{A}(z_0)\subseteq\widehat{\mathbb{C}}$ is a
$weak\ tangent$ of $A$ if there exists a sequence $(\lambda_n)$
with $\lambda_n\rightarrow\infty$ such that the sets $A_n:=\lambda_n(A-z_0)$
converge to $W_{A}(z_0)$ as $n\rightarrow\infty$
in the sense of Hausdorff convergence on $\widehat{\mathbb{C}}$
equipped with the spherical metric.
In this case, we use the notation $$W_A(z_0)=\mathop{\textup{lim}}^{}_{n\rightarrow\infty}(A,z_0,\lambda_n).$$
Since for every sequence $(\lambda_n)$ with $\lambda_n\rightarrow\infty$,
there is a subsequence $(\lambda_{n_k})$ such that the sequence of the
sets $A_{n_k}=\lambda_{n_k}(A-z_0)$ converges as $k\rightarrow\infty$,
$A$ has weak tangents at each point $z_0\in A\setminus\{\infty\}$.
In general, weak tangents at a point are not unique.
In particular, $\lambda W_A(z_0)$ is also a weak tangent.

Now we apply the notion to our carpets $F_{n,p}$.
In fact, the following arguments work for a general class of carpets,
 such as the standard Sierpi\'nski
carpet $S_m$ and carpets which satisfy some self-similarity property.

\emph{A weak tangent of a point }$z_0\in F_{n,p}$ is a closed set
$W_{F_{n,p}}(z_0)\subseteq\widehat{\mathbb{C}}$ such that
 $$W_{F_{n,p}}(z_0)=\mathop{\textup{lim}}^{}_{j\rightarrow\infty}(F_{n,p},
z_0,n^{k_j}),$$ where $k_j\geq1$ and $k_j\rightarrow\infty$ as $j\rightarrow\infty.$

At the point $0$ the carpet $F_{n,p}$ has the unique weak tangent
\begin{equation}\label{w_F_np}
W_{F_{n,p}}(0)=\mathop{\textup{lim}}^{}_{j\rightarrow\infty}
(F_{n,p},0,n^j)=\{\infty\}\cup\bigcup_{j\in\mathbb{N}_0}n^j F_{n,p}.
\end{equation}
This follows from the inclusions $n^j F_{n,p}\subseteq
n^{j+1}F_{n,p}$.

Similarly, at each corner of $O$ there exists a unique weak tangent of
$F_{n,p}$ obtained by a suitable rotation of the set
$W_{F_{n,p}}(0)$ around $0$.

\begin{figure}[!hbp]
\centering
\includegraphics[width=0.45\linewidth]{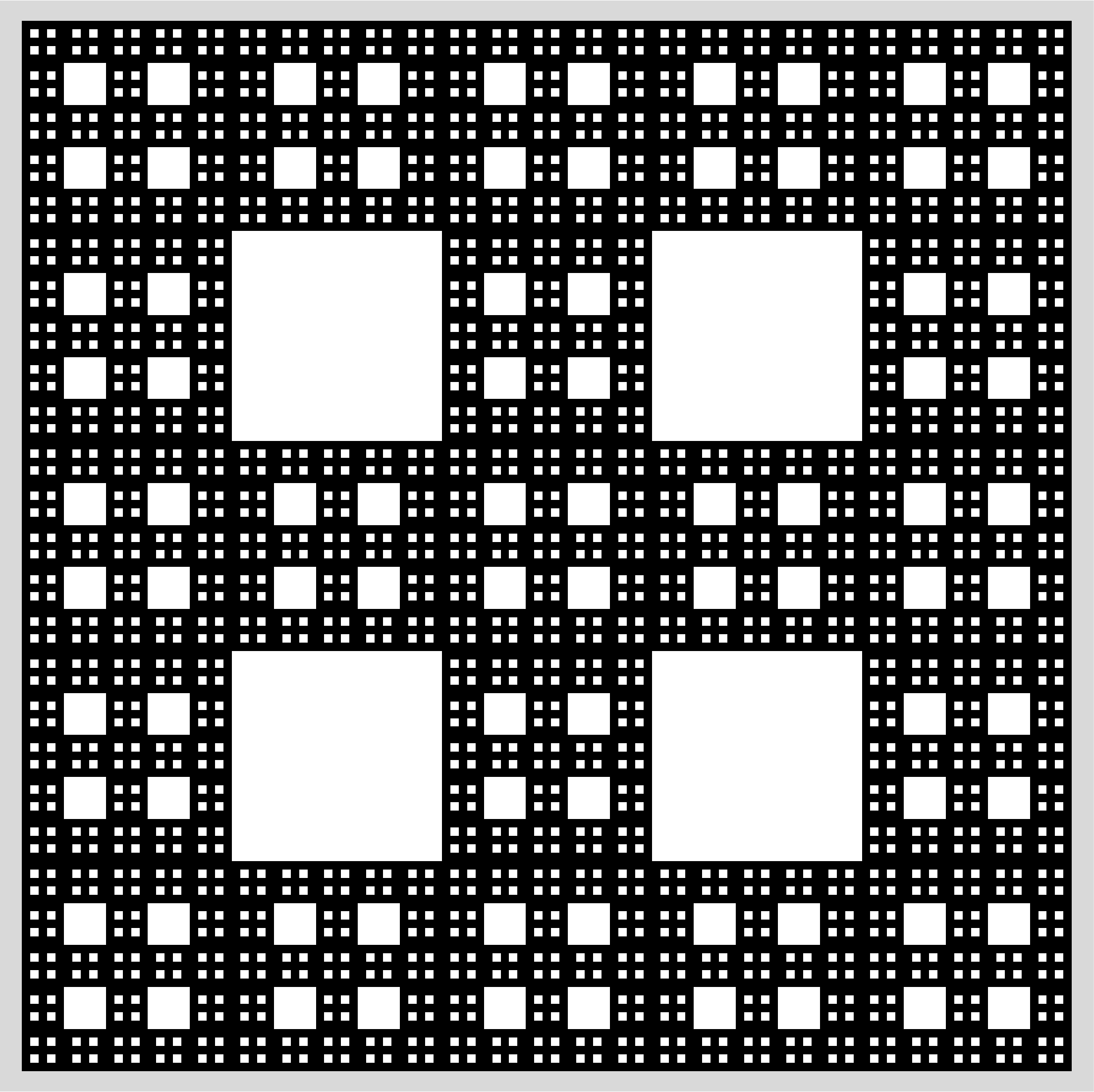}
\caption{\small {The weak tangent $W_{F_{n,p}}(0)$.}}
\label{carpet}
\end{figure}

Let $c=p/n+\mathbf{i}p/n$ be the lower-left corner of $M_1$.
Then at $c$ the carpet $F_{n,p}$ has unique weak tangent
$$W_{F_{n,p}}(c)=
\mathop{\textup{lim}}^{}_{j\rightarrow\infty}(F_{n,p},c,n^{j})=\{\infty\}\cup\bigcup_{j \in\mathbb{N}_0}n^j(\mathbf{i}F_{n,p}\cup
(-\mathbf{i})F_{n,p}\cup(-1)F_{n,p}).$$
Note that $W_{F_{n,p}}(c)$ can be obtained by pasting together three copies of $W_{F_{n,p}}$.
If $z_0$ is a corner of a peripheral circle $C\neq O$ of $F_{n,p}$,
then $F_{n,p}$ has a unique weak tangent at $z_0$ obtained
by a suitable rotation of the set $W_{F_{n,p}}(c)$ around $0$.

\begin{lem}\label{lem_weak_space_measure_zero_uniform_quasicircles}
Let $z_0$ be a corner of a peripheral circle of $F_{n,p}$.
Then the weak tangent $W_{F{n,p}}(z_0)$ is a carpet of measure zero.
If $W_{F_{n,p}}(z_0)$ is equipped with the spherical metric,
then the family of peripheral circles of $W_{F_{n,p}}(z_0)$
are uniform quasicircles and uniformly relatively separated.
\end{lem}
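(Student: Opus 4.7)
My plan is to verify the four requirements---carpet, measure zero, uniform quasicircle, and uniform relative separation---directly from the explicit descriptions of the weak tangent given just above the lemma. Since the Euclidean isometry group $H$ of $F_{n,p}$ conjugates any corner of $O$ to $0$ and any corner of an inner peripheral circle to $c$, it suffices to treat the two representative cases $z_0=0$ and $z_0=c$. Moreover, the $c$-case decomposes as the union of three rotated copies of $W_{F_{n,p}}(0)$ pasted along the positive coordinate axes and sharing the point $\infty$, and each of the four properties is manifestly preserved by such a finite pasting (the three constituents are mutually isometric to $W_{F_{n,p}}(0)$, and the only ``new'' peripheral circle of the pasted set is the single boundary of the first quadrant in $\mathbb{S}^2$). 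Hence I would reduce everything to the case $z_0=0$.

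For $W:=W_{F_{n,p}}(0)=\{\infty\}\cup\bigcup_{j\ge 0}n^j F_{n,p}$, the measure-zero claim is immediate from $\dim_H F_{n,p}=\log(n^2-4)/\log n<2$ (as used in Lemma \ref{F_pq_carpet_measure_zero_quasicircles_separated}), so each $n^j F_{n,p}$ is Lebesgue null and hence so is the countable union together with $\{\infty\}$. For the carpet property I would inventory the complement $\mathbb{S}^2\setminus W$ explicitly: the nesting $n^j F_{n,p}\subset n^{j+1}F_{n,p}$---valid because $1\le p$ forces $[0,1]^2$ to be a retained first-level cell of $nF_{n,p}$---shows that every hole at every level of every $n^j F_{n,p}$ persists as an open Euclidean square in $\mathbb{S}^2\setminus W$, and the unique remaining complementary region is $\mathbb{S}^2\setminus([0,\infty)^2\cup\{\infty\})$, bounded by the two positive coordinate rays meeting at $0$ and at $\infty$. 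Pairwise disjointness of the closures follows from Lemma \ref{F_pq_carpet_measure_zero_quasicircles_separated} applied at each scale; the empty-interior condition follows because every point of $W$ has arbitrarily nearby holes; and the spherical diameters tend to zero because the spherical diameter of a Euclidean square of side $s$ centered at $z_0$ is comparable to $s/(1+|z_0|^2)$, and a direct bookkeeping using the hierarchical structure shows that only finitely many holes can have this quantity above any fixed threshold.

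The technical heart of the proof is the uniform-quasicircle and uniform-relative-separation claim in the spherical metric. Every peripheral circle of $W$ except the outer one is a Euclidean square, while the outer circle is a single fixed Jordan curve with two right-angle corners at $0$ and $\infty$, easily seen to be a quasicircle. The key quantitative estimate I would establish is: for every Euclidean square $Q\subset\mathbb{C}$ of side $s$ centered at $z_0$, the spherical conformal factor $\rho(z)=2/(1+|z|^2)$ satisfies $\max_Q\rho/\min_Q\rho\le C$ for a universal constant $C$; this I would prove by a case split according to whether $s$ is much smaller than, comparable to, or much larger than $\sqrt{1+|z_0|^2}$. This bounded variation of $\rho$ upgrades the trivial Euclidean quasicircle property of a square to a uniform spherical quasicircle property, and converts the Euclidean relative separation---which holds at every scale by Lemma \ref{F_pq_carpet_measure_zero_quasicircles_separated} and is preserved under the self-similarity $z\mapsto nz$---into a uniform spherical relative separation. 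For pairs of peripheral circles lying at very different scales or positions, the ratio $\mathrm{dist}/\min\mathrm{diam}$ is automatically very large, so those pairs give no trouble. The main obstacle is precisely this conformal-factor estimate for arbitrarily placed Euclidean squares; once it is in hand, the rest of the argument is routine bookkeeping.
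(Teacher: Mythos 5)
Your overall strategy coincides with the paper's: reduce to the representative corner $z_0=0$ (the paper simply asserts ``the proof works for other cases,'' whereas you make the reduction of the $c$-case to three pasted copies explicit, which is a welcome addition), get measure zero from the countable union of null sets, observe that all peripheral circles except $\partial\Omega$ are Euclidean squares, prove the Euclidean separation inequality $\mathrm{dist}(C_1,C_2)\geq\min\{\ell(C_1),\ell(C_2)\}$, and then upgrade everything to the spherical metric. The paper outsources this last upgrade to the argument of Lemma 7.1 of Bonk--Merenkov; you attempt to supply it yourself, and that is where your proposal has a genuine flaw.

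The ``key quantitative estimate'' you isolate --- that for \emph{every} Euclidean square $Q$ of side $s$ centered at $z_0$ the spherical conformal factor $\rho(z)=2/(1+|z|^2)$ satisfies $\max_Q\rho/\min_Q\rho\leq C$ with $C$ universal --- is false. For the square $[-R,R]^2$ the ratio is $1+2R^2$, which is unbounded; more generally the ratio blows up whenever $s$ is large compared with $|z_0|$, so no case split on the size of $s$ relative to $\sqrt{1+|z_0|^2}$ can rescue the statement for ``arbitrarily placed'' squares. What saves the argument is a geometric property of the squares that actually occur as peripheral circles of $W_{F_{n,p}}(0)$: by the construction (and here the hypothesis $p\geq 1$ is used), every hole of side $s$ lies at Euclidean distance at least $ps\geq s$ from the coordinate axes, hence its center has modulus at least $c_0 s$ with $c_0>1/\sqrt{2}$; for such squares one indeed gets $\max_Q\rho/\min_Q\rho\leq\bigl((c_0+1/\sqrt{2})/(c_0-1/\sqrt{2})\bigr)^2$, and your subsequent bookkeeping (uniform spherical quasicircles, transfer of the relative separation, spherical diameters tending to zero) then goes through. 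So the gap is repairable, but as written the central estimate on which you hang the ``technical heart'' of the proof is not provable, and the missing ingredient --- the lower bound on the distance from each peripheral square to the origin in terms of its own side length --- must be stated and used explicitly.
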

\begin{proof}
We can assume that $z_0$ equals $0$. The proof works for other cases.

First note that ($\ref{w_F_np}$) implies that $W_{F_{n,p}}(0)$ is a carpet of measure zero,
since $W_{F_{n,p}}(0)$ is the union of countably many sets of measure zero.

Let $\Omega=\{z\in \mathbb{C} : \mathrm{Re}(z)> 0, \mathrm{Im} (z)>0\}$.
Then $\partial \Omega$ ia a peripheral circle of $W_{F_{n,p}}(0)$.
It is easy to construct a bi-Lipschitz map between $\partial \Omega$
and the unit circle (both equipped with the spherical metric). Hence $\partial \Omega$
is a quasicircle. Note that all other peripheral circles of $W_{F_{n,p}}(0)$ are squares.
As a result, the peripheral circles of $W_{F_{n,p}}(0)$ are uniformly quasicircles.

To show that the peripheral circles are uniformly relatively separated,
we only need to check  the following inequality:
\begin{equation}\label{equ:euclidean}
\mathrm{dist}(C_1,C_2)\geq\min\{\ell(C_1), \ell(C_2)\}
\end{equation}
for any peripheral circles $C_1, C_2 \neq \partial \Omega$.
Here $\mathrm{dist}(\cdot,\cdot)$ and $\ell(\cdot)$ denote
the Euclidean distance and Euclidean side length.

The inequality implies that the peripheral circles are uniformly relatively separated
with respect to the Euclidean metric. To see that they are
uniformly relatively separated property with respect to
the spherical metric, we can apply an argument of (\cite{BM}, Lemma 7.1).

\end{proof}

\subsection{Quasisymmetric maps between weak tangents}
We are interested in quasisymmetric maps $g:W\rightarrow W'$ between weak tangents $W$ of $F_{n,p}$ and weak tangents $W'$ of
$F_{n,p}$. Note that $0,\infty\in W,W'$. We call $g$ $normalized$ if $g(0)=0$ and $g(\infty)=\infty$.
\begin{lem}\label{lem_weak_space}
Let $z_0$ be a corner of a peripheral circle of $F_{n_1,p_1}$ and let $w_0$ be a corner of a peripheral circle of $F_{n_2,p_2}$.
Suppose that $f:F_{n_1,p_1}\rightarrow F_{n_2,p_2}$ be a quasisymmetric map with $f(z_0)=w_0$. Then $f$ induces a normalized quasisymmetric map
$g$ between the weak tangent $W_{F_{n_1,p_1}}(z_0)$ and $W_{F_{n_2,p_2}}(w_0)$.
\end{lem}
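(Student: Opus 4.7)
The plan is to extend $f$ to a quasiconformal self-map of $\mathbb{S}^2$ via Proposition \ref{quasiconformal_extend}, and then obtain $g$ as a subsequential limit of suitably rescaled versions of this extension near $z_0$ and $w_0$. The structure of the argument is parallel to the one for $S_m$ in \cite{BM}, the only new ingredient being that we must pair up the scaling factors $n_1^{k_j}$ on the source side with appropriate integer powers $n_2^{l_j}$ on the target side.

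Concretely, let $F:\mathbb{S}^2\to\mathbb{S}^2$ be a quasiconformal extension of $f$, so that $F$ is $\eta$-quasisymmetric on bounded subsets of $\mathbb{C}$ for some distortion function $\eta$. Let $(k_j)$ be any sequence of positive integers with $k_j\to\infty$ so that $W_{F_{n_1,p_1}}(z_0)=\lim_{j\to\infty}(F_{n_1,p_1},z_0,n_1^{k_j})$. Fix a reference point $q\in W_{F_{n_1,p_1}}(z_0)\cap\mathbb{C}$ with $q\neq 0$, pick $q_j\in A_j:=n_1^{k_j}(F_{n_1,p_1}-z_0)$ with $q_j\to q$, and set
$$a_j := \bigl|F(z_0+n_1^{-k_j}q_j)-w_0\bigr|,$$
so that $a_j\to 0$ by continuity of $F$ at $z_0$. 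Let $l_j\in\mathbb{N}$ be the unique integer with $n_2^{l_j}a_j\in[1,n_2)$, and define
$$f_j(z) := n_2^{l_j}\bigl(F(z_0+n_1^{-k_j}z)-w_0\bigr), \qquad z\in A_j.$$
Then $l_j\to\infty$, $f_j(0)=0$, and $|f_j(q_j)|\in[1,n_2)$ is bounded away from $0$ and from $\infty$.

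Each $f_j$ is $\eta$-quasisymmetric with the same distortion function as $F$, because affine rescalings preserve the quasisymmetric distortion. The same is true for $f_j^{-1}$ with the conjugate distortion function. By the standard precompactness theorem for families of quasisymmetric maps with uniformly bounded distortion and with two values pinned away from $0$ and from $\infty$, we can pass to a subsequence so that $f_j$ converges uniformly on compact subsets to a map $g$ defined on the Hausdorff limit $W_{F_{n_1,p_1}}(z_0)$, and $f_j^{-1}$ converges to its inverse; the limit $g$ is then $\eta$-quasisymmetric. To identify the image, observe that since $l_j\to\infty$ and the weak tangent at the corner point $w_0$ is unique (as recorded just before the statement of the lemma), the ambient rescaled sets $n_2^{l_j}(F_{n_2,p_2}-w_0)$ Hausdorff-converge to $W_{F_{n_2,p_2}}(w_0)$ on $\widehat{\mathbb{C}}$; since each $f_j$ is a homeomorphism from $A_j$ into $n_2^{l_j}(F_{n_2,p_2}-w_0)$, the limit map $g$ is a homeomorphism from $W_{F_{n_1,p_1}}(z_0)$ onto $W_{F_{n_2,p_2}}(w_0)$. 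The normalization $g(0)=0$ is immediate from $f_j(0)=0$, while $g(\infty)=\infty$ follows because the unique point at infinity in the compactified source weak tangent must be sent by the homeomorphism $g$ to the unique point at infinity in the target weak tangent.

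The main obstacle is the bookkeeping between the two independent self-similarity ratios $n_1$ and $n_2$: a naive choice of target rescaling (for instance, $\mu_j$ determined solely by the source scale $n_1^{k_j}$) would typically collapse the maps $f_j$ to a constant or blow them up. Choosing $l_j$ from the behaviour of $F$ itself, so that $n_2^{l_j}a_j\in[1,n_2)$, is precisely what couples the two scales to deliver simultaneously (i) an equicontinuous family $\{f_j\}$, and (ii) Hausdorff convergence of the ambient rescaled target carpets to the intrinsic weak tangent $W_{F_{n_2,p_2}}(w_0)$. Once these two convergences are lined up, the rest of the argument is a routine application of the normal-family machinery for quasisymmetric maps.
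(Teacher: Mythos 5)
Your proposal is correct and follows essentially the same route as the paper: extend $f$ quasiconformally via Proposition \ref{quasiconformal_extend}, rescale by $n_1^{-k_j}$ at the source and by a target power $n_2^{l_j}$ chosen so that the image of a reference point is pinned in $[1,n_2)$ (the paper's $k(j)$ with $1\leq|F_j(u_0)|<n_2$), extract a locally uniform limit from the resulting uniformly quasiconformal/quasisymmetric family, and identify the image with $W_{F_{n_2,p_2}}(w_0)$ by Hausdorff convergence of the rescaled target sets. The only cosmetic difference is that the paper keeps the rescaled maps $F_j$ defined on all of $\widehat{\mathbb{C}}$ and spells out the two-sided sequence argument for surjectivity, which you assert more briefly but which is routine.
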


\begin{proof}
By Proposition $\ref{quasiconformal_extend}$ we can extend $f$ to a quasiconformal self-homeomorphism $F$ of
$\widehat{\mathbb{C}}$. There exists a relative neighborhood $N_1$ of $z_0$ in $F_{n_1,p_1}$ and a relative neighborhood
$N_2$ of $w_0$ in $F_{n_2,p_2}$ with $F(N_1)=N_2$ such that $$W_{F_{n_1,p_1}}(0)\setminus\{\infty\}=\bigcup_{j\in\mathbb{N}_0}
n^j_1(N_1-z_0)$$
and $$W_{F_{n_2,p_2}}(0)\setminus\{\infty\}=\bigcup_{j\in\mathbb{N}_0}n^j_2(N_2-w_0)$$
Pick a point $u_0\in N-z_0$, $u_0\neq 0$. Then for each $j\in\mathbb{N}_0$ we have $F(z_0+n_1^{-j}u_0)(\neq w_0,\infty) $ in
$F_{n_2,p_2}$.

We consider the following quasiconformal self-map $F_j$ of $\widehat{\mathbb{C}}$ with $F_j(n^j_1(N_1-z_0))=n_2^{k(j)}(N_2-w_0)$:
$$F_j:u\mapsto n_2^{k(j)}(F(z_0+n_1^{-j}u)-w_0)$$ for $u\in\widehat{\mathbb{C}}$, where $k(j)$ is the unique integer
such that $1\leq|F_j(u_0)|<n_2$.

Note that $k(j)\rightarrow\infty$ as $j\rightarrow\infty$ and $F(\infty)\neq w_0$. This implies that
$F_j(\infty)\rightarrow\infty$ as $j\rightarrow\infty$. We also have $F_j(0)=0$. So the images
of $0,\infty$ and $u_0$ under $F_j$ have mutual spherical distance uniformly bounded from below independent of $j$. Moreover,
$F_j$ is obtained from $F$ by post-composing and pre-composing M\"{o}bius transformations. Hence the sequence $(F_j)$ is
uniformly quasiconformal, and it follows that we can find a subsequence of $(F_j)$ that converges uniformly on
$\widehat{\mathbb{C}}$ to a quasiconformal map $F_{\infty}$. Without loss of generality, we assume that $(F_j)$ converges uniformly
to $F_{\infty}$.

Note that $F_\infty(0)=0$ and $F_\infty(\infty)=\infty$. To prove the statement of the lemma, it suffices to show that $F_{\infty}(W_{F_{n_1,p_1}(z_0)})=W_{F_{n_2,p_2}}(w_0)$, because then
$g:=F_{\infty}|W_{F_{n_1,p_1}}(z_0)$ is an induced normalized quasisymmetric map between $W_{F_{n_1,p_1}}(z_0)$
and $W_{F_{n_2,p_2}}(w_0)$, as desired.

Let $u$ be an arbitrary point in $W_{F_{n_1,p_1}}(z_0)$. There exists a sequence $(u_j)$ with
$u_j\in n_1^j(N_1-z_0)$ converging to $u$. We have $F_j(u_j)\in n_2^j(N_2-w_0)$ and a subsequence of $(F_j(u_j))$
converging to some point $v$ in $W_{F_{n_2,p_2}}(w_0)$. By the definition of $F_{\infty}$, we have $F_{\infty}(u)=v$.
Hence $F_{\infty}(W_{F_{n_1,p_1}}(z_0))\subseteq W_{F_{n_2,p_2}}(w_0)$.

For every point $v$ in $W_{F_{n_2,p_2}}(w_0)$, there exists a sequence $(u_j)$ with $u_j\in n_1^j(N_1-z_0)$ such that
$(F_j(u_j))$ converges to $v$. Then we can choose a subsequence of $(u_j)$ converging to some point $u$ in $W_{F_{n_1,p_1}}(z_0)$
and so $F_{\infty}(u)=v$.

It follows that $F_{\infty}(W_{F_{n_1,p_1}}(z_0))=W_{F_{n_2,p_2}}(w_0)$ and we are done.
\end{proof}

We have proved in Corollary \ref{qs_self_maps} that a
quasisymmetric self-map $f$ of $F_{n,p}$ maps  $\{O,M_1,M_2,M_3,M_4\}$ to $\{O,M_1,M_2,M_3,M_4\}$.
In the remaining part of this section, we will show that there is no quasisymmetric self-map $f$ of $F_{n,p}$ with
$f(0)=c$, where $c$ is a corner of an inner circle. By Lemma \ref{lem_weak_space}, if such an $f$ exists, then it would induce a
normalized quasisymmetric  map from $W_{F_{n,p}}(0)$ to $W_{F_{n,p}}(c)$.
However, the following proposition shows that:

\begin{pro}\label{pro_weak_space_normalized_qs}
There is no normalized quasisymmetric  map from $W_{F_{n,p}}(0)$ to $W_{F_{n,p}}(c)$.
\end{pro}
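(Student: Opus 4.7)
The plan is to argue by contradiction: assume $g: W \to W'$ is a normalized quasisymmetric map, where I abbreviate $W := W_{F_{n,p}}(0)$ and $W' := W_{F_{n,p}}(c)$. By Lemma \ref{lem_weak_space_measure_zero_uniform_quasicircles} and Proposition \ref{quasiconformal_extend}, $g$ extends to a quasiconformal self-homeomorphism $G$ of $\widehat{\mathbb{C}}$ fixing $0$ and $\infty$. Let $\Omega$ denote the open first quadrant of $\widehat{\mathbb{C}}$ and $L = \partial \Omega$ the ``L-shaped'' Jordan curve through $0$ and $\infty$. Since $L$ is the unique peripheral circle through $0$ in both $W$ and $W'$, it follows that $G(L) = L$; because $W \subset \overline{\Omega}$ and $W' \subset \widehat{\mathbb{C}} \setminus \Omega$, the extension $G$ must interchange the two Jordan regions $\Omega$ and $\widehat{\mathbb{C}} \setminus \overline{\Omega}$.

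The invariant I will use to distinguish $W$ and $W'$ is a carpet modulus with respect to the cyclic group generated by the conformal dilation $\psi(z) = nz$, which preserves both $W$ and $W'$ and fixes $L$ setwise. Let $C$ be the peripheral circle of $W$ corresponding to the inner circle $M_1$ of the level-one copy $F_{n,p} \subset [0,1]^2 \subset W$, and let $\Gamma$ be the $\psi$-invariant path family in $\widehat{\mathbb{C}}$ consisting of open paths that join $L$ to some $\psi^k(C)$ within the annulus $\{n^{k-1} \leq |z| \leq n^k\}$. Taking the scale windows $\mathcal{C}_k$ to be the peripheral circles contained in $\{n^{-k} \leq |z| \leq n^k\}$, Proposition \ref{extremal-group} applies and produces a unique extremal $\langle\psi\rangle$-invariant mass distribution; using Proposition \ref{extremal_mass} on the fundamental annulus $\{1 \leq |z| < n\}$, the modulus $\mathrm{mod}_{W/\langle\psi\rangle}(\Gamma)$ can be computed by unfolding to a square carpet in a $\mathbb{C}^*$-cylinder.

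Now Lemma \ref{lem:group carpet} applied to $G$ and the group $\langle\psi\rangle$ yields the identity
\[
\mathrm{mod}_{W/\langle\psi\rangle}(\Gamma) \;=\; \mathrm{mod}_{W'/\langle G\psi G^{-1}\rangle}\bigl(G(\Gamma)\bigr).
\]
The key geometric observation is that $W'$ is obtained by gluing three Euclidean-congruent rotates of the $W$-structure along $L$, one in each of the second, third, and fourth quadrants; by symmetrizing any admissible $\langle\psi\rangle$-invariant mass distribution on $W'$ across these three sectors using the Euclidean reflections permuting them, one triples the total mass while preserving admissibility. Combined with the uniqueness in Proposition \ref{extremal-group}, this forces $\mathrm{mod}_{W'/\langle\psi\rangle}(\Gamma') = 3\,\mathrm{mod}_{W/\langle\psi\rangle}(\Gamma)$ for the corresponding family $\Gamma'$ in $W'$, contradicting the identity above.

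The main obstacle is reconciling the unknown conjugated group $\langle G\psi G^{-1}\rangle$ with the natural scaling group $\langle \psi \rangle$ on $W'$: one must show that the conjugated action still respects the three-sector decomposition of $W'$, so that the symmetrization makes sense and the factor of three genuinely appears on the right-hand side above. This will combine (i) the normalization $G(0) = 0,\ G(\infty) = \infty$ together with the fact that $G$ swaps the two sides of $L$; (ii) a uniqueness argument via Proposition \ref{extremal-group} showing that the extremal distribution on $W'$ must be invariant under the full Euclidean symmetry group permuting the three sectors; and (iii) a careful accounting of how $\psi$-orbits of peripheral circles distribute among the three sectors. Step (ii) is the most delicate point and will be the heart of the proof.
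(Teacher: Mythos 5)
Your overall strategy---distinguishing $W_{F_{n,p}}(0)$ from $W_{F_{n,p}}(c)$ by a group-invariant carpet modulus that detects the one-quadrant versus three-quadrant structure---is the right one, but there is a genuine gap at exactly the point you flag as ``the heart of the proof,'' and the mechanism you propose for closing it does not work. Lemma \ref{lem:group carpet} only gives
$\mathrm{mod}_{W/\langle\psi\rangle}(\Gamma)=\mathrm{mod}_{W'/\langle G\psi G^{-1}\rangle}(G(\Gamma))$,
and there is no reason for the conjugated group $\langle G\psi G^{-1}\rangle$ to coincide with, or be commensurable in any controlled way with, the dilation group $\langle z\mapsto nz\rangle$ acting on $W'$; symmetrizing extremal mass distributions over the three sectors cannot repair this, because the two sides of your intended final comparison are moduli with respect to \emph{different groups} and of \emph{different path families}. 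The paper closes this gap with Corollary \ref{three_circle_theorem}(4): the group of \emph{all} normalized orientation-preserving quasisymmetric self-maps of each weak tangent is infinite cyclic. One then works with these canonical groups $\langle\phi\rangle$ on $W$ and its conjugate $\langle\widetilde\phi\rangle$ on $W'$; the dilation $z\mapsto nz$ and the transplanted generator are each some power of the respective generator, and Lemma \ref{cyclic} converts passage to a finite-index subgroup into multiplication of the modulus by the index $|m|$, which is what lets the two group-moduli be compared at all. This step is indispensable and absent from your sketch.

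A second problem is the direction and justification of your factor of $3$. You claim that gluing three congruent sectors makes the modulus of the corresponding family in $W'$ three times \emph{larger}, by tripling the mass of an admissible distribution. The paper's comparison (Lemma \ref{lemma2}) goes the other way: taking $\Gamma$ to be the open paths in the open first quadrant joining the two boundary rays and $\widetilde\Gamma$ the analogous family in the complementary three quadrants, every path of $\widetilde\Gamma$ contains three disjoint subpaths, one per quadrant, each carried to a path of $\Gamma$ by a reflection; the sectors are thus traversed \emph{in series}, and transplanting one third of an admissible distribution to each sector yields $\mathrm{mod}_{W'/\langle\psi\rangle}(\widetilde\Gamma)\le\tfrac13\,\mathrm{mod}_{W/\langle\phi\rangle}(\Gamma)$. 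Your family (paths from $L$ to a distinguished inner square within a fixed annulus) has no such series structure, the asserted tripling is unsubstantiated, and even granting it you would still need the analogue of Lemma \ref{lemma1} (finiteness and positivity of the relevant modulus) to extract a contradiction. I recommend replacing your path family by the quadrant-to-quadrant family and reorganizing the argument around the infinite cyclic groups of Corollary \ref{three_circle_theorem}(4) together with Lemma \ref{cyclic}.
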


To prove the proposition, we need two lemmas.

Let $G$ and $\widetilde{G}$ be the group of normalized orientation-preserving
quasisymmetric self-maps of $W_{F_{n,p}}(0)$ and $W_{F_{n,p}}(c)$, respectively.
By Corollary \ref{three_circle_theorem}, $G$ and $\widetilde{G}$ are infinite cyclic groups.
Note that the map $\mu(z):= nz$ is contained in $G\cap \widetilde{G}$.
We assume that $G=<\phi>$ and $\mu=\phi^s$ for some $s\in \mathbb{Z}_+$.
Since the peripheral circles of $W_{F_{n,p}}(0)$ are uniformly quasicircles and uniformly relatively
separated, there exists a quasiconformal extension $\Phi: \widehat{\mathbb{C}}\to \widehat{\mathbb{C}}$ of $\phi$.
Let $H$ be the group generated by the reflection in the real and in the imaginary axes. We may assume that $\Phi$ is equivalent under the
 action of $H$ (see Page 42, \cite{BM} for the discussion).

Let $\Omega=\{z\in \mathbb{C} : \mathrm{Re}(z)> 0, \mathrm{Im} (z)>0\}$.
Then $C_0:=\partial \Omega$ is a peripheral circle of $W_{F_{n,p}}(0)$.
Since $\Phi(C_0)=C_0$ and $\Phi$ is orientation-preserving, $\Phi(\Omega)=\Omega$.

Let $\Gamma$ be the family of all open paths in $\Omega$ that connects the positive real
and positive imaginary axes. Since the paths in $\Omega$ are open, they don't intersect
with $C_0$. For any peripheral circle $C$ of $W_{F_{n,p}}(0)$ that meets some path in $\Gamma$,
note that $\phi^k(C)\neq C$ for all $k\in \mathbb{Z}\setminus \{0\}$ (otherwise, $\phi$
would be of finite order, contradicted with the fact that $\phi$ is the generator of the infinite cyclic group $G$).
So we can apply Lemma \ref{cyclic} to conclude that

$$\mathrm{mod}_{W_{F_{n,p}}(0)/<\mu>}(\Gamma)=\mathrm{mod}_{W_{F_{n,p}}(0)/<\phi^s>}(\Gamma)=s\mathrm{mod}_{W_{F_{n,p}}(0)/G}(\Gamma).$$

Note that without the action of the group $G$, the carpet modulus
$\mathrm{mod}_{W_{F_{n,p}}(0)}(\Gamma)$ is equal to infinity.

\begin{lem}\label{lemma1}
We have $0< \mathrm{mod}_{W_{F_{n,p}}(0)/G}(\Gamma)<\infty$.
\end{lem}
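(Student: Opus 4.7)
The reduction $\mathrm{mod}_{W_{F_{n,p}}(0)/\langle\mu\rangle}(\Gamma) = s \cdot \mathrm{mod}_{W_{F_{n,p}}(0)/G}(\Gamma)$ already derived from Lemma \ref{cyclic} in the discussion preceding the lemma reduces the task to showing $0 < \mathrm{mod}_{W_{F_{n,p}}(0)/\langle \mu\rangle}(\Gamma) < \infty$. The benefit is that $\mu(z) = nz$ is a Euclidean similarity: under the conformal map $w = \log z : \Omega \to S := \{w : 0 < \mathrm{Im}\, w < \pi/2\}$, $\mu$ becomes the translation $T(w) = w + \log n$, each peripheral circle $C \neq C_0$ of $W_{F_{n,p}}(0)$ becomes the boundary of a $\mathbb{C}^*$-square in $S$ with side length $l_{\mathbb{C}^*}(C)$ in the sense of Section 3, and each $\langle\mu\rangle$-orbit has a unique representative inside the fundamental rectangle $R := \{0 \leq \mathrm{Re}\, w < \log n,\, 0 < \mathrm{Im}\, w < \pi/2\}$.

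For the upper bound I would construct an explicit $\mu$-invariant admissible mass distribution by setting $\rho(C) := (2/\pi)\, l_{\mathbb{C}^*}(C)$ for $C \neq C_0$ and $\rho(C_0) := 0$. Any path $\gamma \in \Gamma$ pushes forward to $\tilde\gamma \subset S$ connecting the bottom and top boundary lines of $S$; a Fubini-style argument — using that $\log(W_{F_{n,p}}(0) \cap \overline\Omega)$ has two-dimensional Lebesgue measure zero while paths lying mostly in this null set form a conformal-modulus-zero subfamily — shows that the vertical projections onto $[0, \pi/2]$ of the $\mathbb{C}^*$-squares $\log C$ intersected by $\tilde\gamma$ cover $[0, \pi/2]$ up to a null set, giving $\sum_C l_{\mathbb{C}^*}(C) \geq \pi/2$, hence $\sum_C \rho(C) \geq 1$. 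Finiteness follows because one representative $\log C$ per orbit gives pairwise disjoint $\mathbb{C}^*$-squares in $R$, whence $\sum_\mathcal{O} l_{\mathbb{C}^*}(\mathcal{O})^2 \leq |R| = (\log n)(\pi/2)$ and therefore $\sum_\mathcal{O} \rho(\mathcal{O})^2 \leq 2\log n/\pi$.

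For the lower bound I compare with the conformal modulus in the quotient cylinder $\mathcal{C} := \overline S / \langle T \rangle$. Given any admissible $\mu$-invariant $\rho$, define a conformal density $\hat\rho : \mathcal{C} \to [0, \infty]$ by placing mass of magnitude $\rho(C)/l_{\mathbb{C}^*}(C)$ on a thickening of each $\log C$ modulo $T$; the admissibility of $\rho$ translates into admissibility of $\hat\rho$ for the family $\tilde\Gamma$ of paths in $\mathcal{C}$ joining its two boundary circles (via a line-integral computation), and a Cauchy-Schwarz-type comparison yields $\int_\mathcal{C} \hat\rho^2\, dA \leq C_0 \sum_\mathcal{O} \rho(\mathcal{O})^2$ for some universal constant $C_0$. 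Since the standard quadrilateral modulus gives $\mathrm{mod}(\tilde\Gamma) = 2\log n/\pi > 0$, this forces $\sum_\mathcal{O} \rho(\mathcal{O})^2 \geq C_0^{-1} \cdot 2\log n/\pi$ uniformly in $\rho$, so $\mathrm{mod}_{W_{F_{n,p}}(0)/\langle \mu \rangle}(\Gamma) > 0$.

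The main obstacle is the careful verification of admissibility in the upper bound — specifically, showing that the exceptional family of paths for which the vertical-projection argument fails (e.g., paths contained in or hugging the measure-zero set $\log W_{F_{n,p}}(0)$) has conformal modulus zero. The analogous bookkeeping in the lower bound — translating a carpet mass distribution into a conformal density with the correct admissibility and $L^2$ bounds — requires similar care. Both technical steps follow the blueprint of the corresponding estimates in \cite{BM} for the standard square carpet $S_3$ and should carry over to $W_{F_{n,p}}(0)$ once the relative separation and uniform quasicircle properties from Lemma \ref{lem_weak_space_measure_zero_uniform_quasicircles} are invoked.
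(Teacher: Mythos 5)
Your upper bound is essentially the paper's argument rewritten in logarithmic coordinates: your density $\rho(C)=\frac{2}{\pi}l_{\mathbb{C}^*}(C)$ is the paper's $\rho(C)=\frac{2}{\pi}\theta(C)$, where $\theta(C)$ is the angular extent of $C$ seen from the origin, and your projection/covering argument for admissibility (the path's angular projection covers $[0,\pi/2]$ while its intersection with the measure-zero carpet projects to a null set) is the same as the paper's. One inaccuracy: the peripheral circles of $W_{F_{n,p}}(0)$ are Euclidean squares, not $\mathbb{C}^*$-squares, so their images under $\log$ are not genuine squares in the strip, and the clean identity $\sum_{\mathcal{O}}l_{\mathbb{C}^*}(\mathcal{O})^2\leq |R|$ via disjointness is not literally available; the paper instead uses the comparability $\theta(C)\leq K\ell(C)$ for the orbit representatives in the fundamental annulus $F=\overline{\mu(Q_0)\setminus Q_0}$ together with $\sum_{C\subset F}\ell(C)^2=\mathrm{Area}(F)=n^2-1$, which is the constant-losing version of your area bound and is all that is needed. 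Your lower bound, by contrast, is a genuinely different route. The paper makes no quantitative comparison with a conformal modulus: it verifies the hypotheses of Proposition \ref{extremal-group} (taking $\mathcal{C}_k$ to be the circles contained in $\overline{\mu^{k}(Q_0)\setminus\mu^{-k}(Q_0)}$, so each orbit meets $\mathcal{C}_k$ in exactly $2k$ elements), concludes that an extremal invariant mass distribution exists, and observes that if the modulus were $0$ this extremal distribution would be identically zero, which is admissible only if almost every path of $\Gamma$ is exceptional, i.e.\ $\mathrm{mod}(\Gamma)=0$ --- contradicting the positivity of the conformal modulus of the family joining the two sides of the quarter plane. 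Your approach instead converts an arbitrary admissible invariant $\rho$ into a conformal density on the quotient cylinder and compares with $\mathrm{mod}(\widetilde{\Gamma})=2\log n/\pi$; this buys an explicit positive lower bound, but the admissibility transfer is exactly the delicate point you flag: a path can meet a peripheral circle $C$ only glancingly, spending arbitrarily little length in any thickening of $C$, so to upgrade $\sum_{\gamma\cap C\neq\emptyset}\rho(C)\geq 1$ to $\int\hat{\rho}\,ds\gtrsim 1$ you must use that each square is uniformly relatively separated from $C_0=\partial\Omega$, whence any path joining the two axes that touches $C$ necessarily crosses an annular neighborhood of $C$ of width comparable to $\ell(C)$. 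With that supplied your argument goes through; the paper's softer argument avoids this bookkeeping entirely at the price of invoking the existence machinery for extremal mass distributions already set up in Section 2.
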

\begin{proof}
Let us first show that $\mathrm{mod}_{W_{F_{n,p}}(0)/<\mu>}(\Gamma)<\infty$ by constructing an admissible
mass distribution of finite mass.

Let $pr: \mathbb{C}\setminus\{0\} \to \mathbb{S}^1$ be the projection $z\mapsto \frac{z}{|z|}$.
If $C\neq C_0$ is a peripheral circle of $W_{F_{n,p}}(0)$, we let $\theta(C)$ be the arc length
of $pr(C)$. We set

\begin{equation*}
\rho(C):=
\begin{cases}
0, \mathrm{if} \  C=C_0; \\
\frac{2}{\pi}\theta(C), \mathrm{if} \  C\neq C_0.
\end{cases}
\end{equation*}
Note that $\rho$ is $<\mu>$-invariant.

Let $\Gamma_0$ be the family of paths $\gamma\in \Gamma$ that are not locally rectifiable or
for which $\gamma\cap W_{F_{n,p}}(0)$ has positive length. Since $W_{F_{n,p}}(0)$ is a set of measure
zero, we have $\mathrm{mod}(\Gamma_0)=0$, i.e., $\Gamma_0$ is an exceptional subfamily of $\Gamma$.

For any $\gamma\in \Gamma\setminus\Gamma_0$, note that
$$\sum_{\gamma\cap\C \neq \emptyset} \rho(C)=\frac{2}{\pi}\sum_{\gamma\cap\C\neq \emptyset} \theta(C)\geq 1.$$
As a result, $\rho$ is admissible.

Let $Q_0=[0,1]\times[0,1]$. Note that every $<\mu>$-orbit of a peripheral circle $C\neq C_0$
has a unique element contained in the set $F=\overline{\mu(Q_0)\setminus Q_0}$.
There is a constant $K>0$ such that
$$\theta(C)\leq K\ell(C)$$
for all peripheral circles $C\subset F$.
It follows that
$$\frac{4}{\pi^2}\sum_{C\subset F}\theta(C)^2 \lesssim \sum_{C\subset F}\ell(C)^2=Area(F)=n^2-1.$$
Hence $\rho$ is a finite admissible mass distribution for $\mathrm{mod}_{W_{F_{n,p}}(0)/<\mu>}(\Gamma)$.

To show that $ \mathrm{mod}_{W_{F_{n,p}}(0)/<\mu>}(\Gamma)>0$, we only need to show that
the carpet satisfies the assumptions in Proposition \ref{extremal-group}.
Then the extremal mass distribution for $\mathrm{mod}_{W_{F_{n,p}}(0)/<\mu>}(\Gamma)$ exists and this
is only possible if $\Gamma$ itself is an exceptional family, that is, $\mathrm{mod}(\Gamma)=0$.

In fact, for $k\in \mathbb{N}$ we let $\mathcal{C}_k$ be the set of all peripheral circles $C$ of
$W_{F_{n,p}}(0)$ with $C\subset F_k= \overline{\mu^{k}(Q_0)\setminus \mu^{-k}(Q_0)}$.
Then
\begin{enumerate}
\item Every $<\mu>$-orbit of a peripheral circle $C\neq C_0$ has exactly $2k$ elements in $\mathcal{C}_k$.

\item Let $\Gamma_k$ be the family of paths in $\Gamma$ that only meet peripheral circles in $\mathcal{C}_k$.
Then $\Gamma=\bigcup_k \Gamma_k$.
\end{enumerate}
As a result, the assumptions in Proposition \ref{extremal-group} are satisfied.
\end{proof}

Let  $\widetilde{\Omega}=\mathbb{C} \setminus \overline{\Omega}$. The closure of $\widetilde{\Omega}$
contains $W_{F_{n,p}}(c)$ and $C_0=\partial \Omega=\partial \widetilde{\Omega}$
is a peripheral circle of $W_{F_{n,p}}(c)$. Denote $\psi=\Phi |_{W_{F_{n,p}}(c)}$. Then we have $\psi\in \widetilde{G}$.
Let $\widetilde{\Gamma}$ be the family of all open paths in $\widetilde{\Omega}$ that join the positive real
and the positive imaginary axes.

\begin{lem}\label{lemma2}
We have $\mathrm{mod}_{W_{F_{n,p}}(c)/<\psi>}(\widetilde{\Gamma})\leq \frac{1}{3}\mathrm{mod}_{W_{F_{n,p}}(0)/G}(\Gamma)$.
\end{lem}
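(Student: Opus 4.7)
The plan is to construct an explicit admissible $\langle\psi\rangle$-invariant mass distribution $\widetilde{\rho}$ on the peripheral circles of $W_{F_{n,p}}(c)$ with total mass at most $\frac{1}{3}\textup{mod}_{W_{F_{n,p}}(0)/G}(\Gamma)$. The key geometric input is the decomposition $W_{F_{n,p}}(c)=\{\infty\}\cup W_2\cup W_3\cup W_4$, where, for $k=2,3,4$, $W_k$ is an isometric copy of $W_{F_{n,p}}(0)$ lying in the closed $k$-th quadrant $\Omega_k$, and $\Omega_2\cup\Omega_3\cup\Omega_4=\widetilde{\Omega}$. By the diagonal symmetry of $W_{F_{n,p}}(0)$, each $W_k$ is actually the image of $W_{F_{n,p}}(0)$ under a nontrivial element of $H$. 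Any path in $\widetilde{\Gamma}$ joining the positive real and positive imaginary axes must therefore traverse $\Omega_4$, $\Omega_3$, and $\Omega_2$ in succession, producing in each $\Omega_k$ a subpath that, after the identification $\Omega_k\to\overline{\Omega}$, lies in $\Gamma$.

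First I would apply Lemma \ref{lemma1} together with Proposition \ref{extremal-group} to extract the unique extremal $G$-invariant mass distribution $\rho$ for $\textup{mod}_{W_{F_{n,p}}(0)/G}(\Gamma)$; by Proposition \ref{extremal_mass}, $\rho(C_0)=0$. Denote by $T_k\in H$ the Euclidean isometry carrying $W_{F_{n,p}}(0)$ onto $W_k$. I would then set
$$\widetilde{\rho}(C_0):=0,\qquad \widetilde{\rho}(C):=\tfrac{1}{3}\,\rho\bigl(T_k^{-1}(C)\bigr)\ \text{whenever}\ C\subset W_k.$$

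Next I would verify the $\langle\psi\rangle$-invariance of $\widetilde{\rho}$; this step exploits the $H$-equivariance built into $\Phi$. Since $\Phi$ commutes with every element of $H$, it preserves each $W_k$, and the restriction $\psi|_{W_k}$ coincides with $T_k\circ\phi\circ T_k^{-1}$, so the $G$-invariance of $\rho$ transfers to $\langle\psi\rangle$-invariance of $\widetilde{\rho}$. For admissibility, after discarding the standard exceptional family of non-rectifiable paths (which has carpet modulus zero because $W_{F_{n,p}}(c)$ has measure zero), each remaining $\gamma\in\widetilde{\Gamma}$ yields $T_k^{-1}(\gamma\cap\Omega_k)\in\Gamma$ for $k=2,3,4$; admissibility of $\rho$ then gives $\sum_{C\subset W_k,\,\gamma\cap C\neq\emptyset}\widetilde{\rho}(C)\geq\tfrac{1}{3}$, and summing over the three quadrants produces the required lower bound of $1$.

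Finally, since $\psi$ preserves each $W_k$, every $\langle\psi\rangle$-orbit of peripheral circles sits entirely in one $W_k$ and corresponds via $T_k^{-1}$ to a $G$-orbit in $W_{F_{n,p}}(0)$. Each $G$-orbit $\mathcal{O}$ therefore gives rise to three $\langle\psi\rangle$-orbits, each of $\widetilde{\rho}$-value $\rho(\mathcal{O})/3$, contributing $3\cdot(\rho(\mathcal{O})/3)^2=\rho(\mathcal{O})^2/3$ to $\textup{mass}(\widetilde{\rho})$. Summation yields $\textup{mass}(\widetilde{\rho})=\tfrac{1}{3}\,\textup{mass}(\rho)=\tfrac{1}{3}\textup{mod}_{W_{F_{n,p}}(0)/G}(\Gamma)$, which is exactly the claimed bound. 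The main obstacle in this plan is the $\langle\psi\rangle$-invariance verification: it rests entirely on the prior choice of $\Phi$ as an $H$-equivariant quasiconformal extension, which is what guarantees that $\psi$ respects the three-fold symmetric decomposition of $W_{F_{n,p}}(c)$ and acts on each piece as an isometric conjugate of $\phi$.
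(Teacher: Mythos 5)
Your proposal is correct and follows essentially the same route as the paper: transplant a $G$-invariant mass distribution from $W_{F_{n,p}}(0)$ to the three quadrant copies inside $W_{F_{n,p}}(c)$ with weight $\tfrac{1}{3}$, use the $H$-equivariance of $\Phi$ to get $\langle\psi\rangle$-invariance, and split each path of $\widetilde{\Gamma}$ into three subpaths, one per quadrant, to check admissibility. The only point to tighten is the exceptional family: it must consist of the paths in $\widetilde{\Gamma}$ having a subpath carried by some $\alpha\in H$ into the exceptional family $\Gamma_0$ attached to the admissibility of $\rho$ (not merely the non-rectifiable paths), and this family still has modulus zero; apart from that your details, including the exact orbit count giving $\mathrm{mass}(\widetilde{\rho})=\tfrac{1}{3}\mathrm{mass}(\rho)$, match the paper's argument.
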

\begin{proof}
Let $\rho$ be an arbitrary admissible invariant mass distribution for $\mathrm{mod}_{W_{F_{n,p}}(0)/G}(\Gamma)$,
with exceptional family $\Gamma_0\subset \Gamma$. We set

\begin{equation*}
\widetilde{\rho}(\widetilde{C}):=
\begin{cases}
0, \mathrm{if} \  \widetilde{C}=C_0; \\
\frac{1}{3}\rho(\alpha(\widetilde{C}))
\end{cases}
\end{equation*}
if there is an $\alpha\in H$ such that $\alpha(\widetilde{C})$ is a peripheral circle of $W_{F_{n,p}}(0)$
(such an $\alpha$ exits and is unique).

Since $\Phi$ is $H$-equivalent and $\rho$ is $G$-invariant, $\widetilde{\rho}$ is $<\psi>$-invariant.

Let $\widetilde{\Gamma}_0$ be the family of paths in $\widetilde{\Gamma}$ that have a subpath that can be
mapped to a path in $\Gamma_0$ by an element of $\alpha\in H$. Then $\mathrm{mod}(\widetilde{\Gamma}_0)=0$.

Let $\gamma\in \widetilde{\Gamma}$. Note that $\gamma$ has three disjoint open subpaths:
one for each quarter-plane of $\widetilde{\Omega}$ and by suitable elements in $H$, the three subpaths are mapped to
paths in $\Gamma$. Denote the images by  $\gamma_1, \gamma_2,
\gamma_3$.
If $\gamma\in \widetilde{\Gamma}\setminus\widetilde{\Gamma}_0$, then $\gamma_i\in \Gamma\setminus\Gamma_0, i=1,2,3$
and
$$\sum_{\gamma\cap \widetilde{C}\neq \emptyset}\widetilde{\rho}(\widetilde{C})
\geq \frac{1}{3}\sum_{i=1}^3 \sum_{\gamma_i\cap C\neq \emptyset}\rho(C)\geq 1.$$
Hence $\widetilde{\rho}$ is admissible for $\mathrm{mod}_{W_{F_{n,p}}(c)/<\psi>}(\widetilde{\Gamma})$
and
$$\mathrm{mod}_{W_{F_{n,p}}(c)/<\psi>}(\widetilde{\Gamma})\leq \mathrm{mass}_{W_{F_{n,p}}(c)/<\psi>}(\widetilde{\rho})
\leq \frac{1}{3}\mathrm{mass}_{W_{F_{n,p}}(0)/G}(\rho).$$
Since $\rho$ is an arbitrary
 mass distribution for $\frac{1}{3}\mathrm{mod}_{W_{F_{n,p}}(0)/G}(\Gamma)$,
 the statement follows.

 \end{proof}

\begin{proof}[Proof of Proposition \ref{pro_weak_space_normalized_qs}]
Suppose not, there exists a normalized quasisymmetric map
$f: W_{F_{n,p}}(0) \to W_{F_{n,p}}(c)$. Precomposing $f$ by the reflection in the diagonal line $\{x=y\}$
if necessary, we may assume that $f$ is orientation-preserving.
Then $\widetilde{G}=f\circ G\circ f^{-1}$ and
$\widetilde{\phi}=f\circ \phi\circ f^{-1}$ is a generator for $\widetilde{G}$.

Let $F:\widehat{\mathbb{C}}\to \widehat{\mathbb{C}}$ be a quasiconfomral extension of $f$.
Then $\widetilde{\Gamma}=F(\Gamma)$. By quasisymmetric invariance of carpets modulus,
$$\mathrm{mod}_{W_{F_{n,p}}(c)/\widetilde{G}}(\widetilde{\Gamma})=\mathrm{mod}_{W_{F_{n,p}}(0)/G}(\Gamma).$$

Assume that $\psi=\widetilde{\phi}^m$. Then similar to our discussion before Lemma \ref{lemma1}, we have

$$\mathrm{mod}_{W_{F_{n,p}}(c)/<\psi>}(\widetilde{\Gamma})=|m|\mathrm{mod}_{W_{F_{n,p}}(c)/\widetilde{G}}(\widetilde{\Gamma}).$$

Hence by Lemma \ref{lemma2} we have
\begin{eqnarray*}
\mathrm{mod}_{W_{F_{n,p}}(0)/G}(\Gamma)&=& \mathrm{mod}_{W_{F_{n,p}}(c)/\widetilde{G}}(\widetilde{\Gamma}) \\
&=& \frac{1}{|m|}\mathrm{mod}_{W_{F_{n,p}}(c)/<\psi>}(\widetilde{\Gamma}) \\
&\leq& \frac{1}{3|m|} \mathrm{mod}_{W_{F_{n,p}}(0)/G}(\Gamma).
\end{eqnarray*}
This is possible only if $\mathrm{mod}_{W_{F_{n,p}}(0)/G}(\Gamma)$ is equal to $0$ or $\infty$.
But this is contradicted with Lemma \ref{lemma1}.

\end{proof}

\section{Quasisymmetric rigidity}\label{sec_proof_of_theorems}
Let $D$ be  the diagonal $\{(x,y)\in \mathbb{R}^2 : x=y\}$ and
$V$ be the vertical line $\{(x,y)\in \mathbb{R}^2 : x=\frac{1}{2}\}$.
We denote the reflections in $D$ and $V$ by $R_D$ and $R_V$, respectively.
The Euclidean isometry group  of $F_{n,p}$ is generated by $R_D$ and $R_V$.

Let $\mathrm{QS}(F_{n,p})$ be the group of quasisymmetric self-maps of $F_{n,p}$.
By Corollary \ref{coro:finite}, $\mathrm{QS}(F_{n,p})$ is a finite group.

\begin{pro}\label{pro_noninterchange}
 Let  $f$ be a quasisymmetric self-map of $F_{n,p}$.
Then $f(\{O\})=\{O\}$ and $f(\{M_1,M_2,M_3,M_4\})=\{M_1,M_2,M_3,M_4\}$.
\end{pro}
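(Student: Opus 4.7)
The plan is to argue by contradiction, combining Corollary \ref{qs_self_maps} with the weak-tangent analysis of Section \ref{sec_weak_tangent_spaces}. By Corollary \ref{qs_self_maps}, $f$ permutes $\{O,M_1,M_2,M_3,M_4\}$, so it suffices to rule out $f(O)=M_i$ for some $i\in\{1,2,3,4\}$. Suppose to the contrary that $f(O)=M_i$; by post-composing with a suitable element of the Euclidean isometry group $H\subset \mathrm{QS}(F_{n,p})$ we may assume $i=1$. Pick a corner $v$ of $O$, for instance $v=(0,0)$, and set $w=f(v)\in M_1$.

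The idea is that the weak tangent of $F_{n,p}$ at a corner of $O$ has a fundamentally different shape from the weak tangent at any point of $M_1$. At a corner of $O$ the weak tangent is, after an appropriate Euclidean rotation fixing the basepoint, the set $W_{F_{n,p}}(0)$ of (\ref{w_F_np}), which lies in a single closed quadrant of $\mathbb{C}$. At a corner of $M_1$ the weak tangent is, after rotation, the three-quadrant set $W_{F_{n,p}}(c)$, while at a point in the relative interior of a side of $M_1$ it is of half-plane type. First I would argue that $w$ must actually be a corner of $M_1$. The cleanest route is to observe that Lemma \ref{lem_weak_space} extends, with essentially the same proof, to any limit point lying on a single peripheral circle: if $w$ were a non-corner point, one would obtain a normalized quasisymmetric map from the quarter-plane tangent at $v$ onto a half-plane tangent at $w$, and rule this out by a carpet-modulus argument modeled on Lemmas \ref{lemma1} and \ref{lemma2} (the factor $1/3$ there is replaced by $1/2$, still giving the same contradiction). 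Alternatively, one may just note that the four corners of $O$ are the only points of $F_{n,p}$ whose weak tangent is contained in a closed quadrant, a characterization that is preserved by any quasisymmetric self-map. This step — separating the quarter-plane tangent from the half-plane one — is the main technical obstacle.

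Once $w$ is known to be a corner of $M_1$, Lemma \ref{lem_weak_space} produces a normalized quasisymmetric map between $W_{F_{n,p}}(v)$ and $W_{F_{n,p}}(w)$. Conjugating by the rotations that identify $W_{F_{n,p}}(v)$ with $W_{F_{n,p}}(0)$ and $W_{F_{n,p}}(w)$ with $W_{F_{n,p}}(c)$, we obtain a normalized quasisymmetric map $W_{F_{n,p}}(0)\to W_{F_{n,p}}(c)$, in direct contradiction with Proposition \ref{pro_weak_space_normalized_qs}. This forces $f(O)=O$, and the remaining assertion $f(\{M_1,M_2,M_3,M_4\})=\{M_1,M_2,M_3,M_4\}$ is then immediate from Corollary \ref{qs_self_maps}.
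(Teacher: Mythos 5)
Your overall strategy matches the paper's: argue by contradiction from Corollary \ref{qs_self_maps}, reduce to a normalized quasisymmetric map between the quarter-plane weak tangent $W_{F_{n,p}}(0)$ and the three-quadrant weak tangent $W_{F_{n,p}}(c)$, and invoke Proposition \ref{pro_weak_space_normalized_qs}. The endgame (once you know $f$ sends a corner of $O$ to a corner of $M_1$) is exactly the paper's. But the step you yourself flag as ``the main technical obstacle'' --- showing that $w=f(0)$ is a corner of $M_1$ --- is precisely where the paper's key idea lives, and neither of your two proposed routes closes it. Route (a) asks for Lemma \ref{lem_weak_space} and the modulus machinery of Lemmas \ref{lemma1}--\ref{lemma2} at an \emph{arbitrary} point $w$ in the relative interior of a side of $M_1$; but at such a point the weak tangent is in general not unique and is not invariant under $\mu(z)=nz$, so there is no self-similar structure, no infinite cyclic group $\widetilde G$ from Corollary \ref{three_circle_theorem}(4), and no analogue of the identity $\mathrm{mod}_{W/\langle\mu\rangle}=s\,\mathrm{mod}_{W/G}$ on which the whole comparison rests. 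This is not ``essentially the same proof.'' Route (b) is not valid as stated: quasisymmetric maps do not carry the weak tangent at $z_0$ to the weak tangent at $f(z_0)$ as sets, only (and only at corners, via Lemma \ref{lem_weak_space}) up to a quasisymmetric map; and ``being contained in a closed quadrant'' is not a quasisymmetric invariant --- a quadrant and a half-plane are quasiconformally equivalent, so the shape of the ambient sector proves nothing by itself.

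The paper sidesteps all of this with an orbit-counting parity argument that you are missing. It considers the stabilizer $G=\{g\in\mathrm{QS}(F_{n,p}): g(O)=O,\ g(M_1)=M_1\}$ and its orientation-preserving index-two subgroup $G'$, and uses Corollary \ref{three_circle_theorem}(1)--(2) to show that the orbits $\mathcal{O}_G(0)$, $\mathcal{O}_G(c)$ and $\mathcal{O}_G(m)$ (with $m=f(0)$) all have cardinality $\#G'$, which is then shown to be $1$ or $3$ --- in particular odd. If $\mathcal{O}_G(m)$ contained neither corner $c$ nor $c'$ of $M_1$ on the diagonal, it would be $R_D$-invariant with no $R_D$-fixed points, hence of even cardinality, a contradiction. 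Therefore some $h\in G$ sends $m$ to $c$ or $c'$, and $h\circ f$ maps the corner $0$ of $O$ to a corner of $M_1$, so Lemma \ref{lem_weak_space} applies as stated. You need this (or some equally concrete substitute) to bridge the gap; as written, your proof is incomplete at its central step.
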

\begin{proof}
From Corollary $\ref{qs_self_maps}$, we argue by contraction and assume that there exists a quasisymmetric self-map $f$ of $F_{n,p}$ and some $i\in\{1,2,3,4\}$ such
that $f(\{O\})=\{M_i\}$. By pre-composing and post-composing suitable elements in the Euclidean isometry group, we can
suppose that $f$ is orientation-preserving and $f(\{O\})=\{M_1\}$.

Let $G$ be the subgroup of $\mathrm{QS}(F_{n,p})$,
$$G=\{g\in\textup{QS}(F_{n,p})\ | \ g({O})={O},g({M_1})={M_1}\}.$$
$G$ has a subgroup $G'$ with index two  consisting of
orientation-preserving elements. Then
$$G=G'\bigsqcup G'\circ R_D.$$
We denote by
$$\mathcal{O}_{G}(z)=\{g(z):g\in G\}$$ the orbit of $z$ under the action of $G$ for arbitrary $z\in F_{n,p}$ .
Let $c=(p/n,p/n)$ and $c'=((p+1)/n,(p+1)/n)$ be the lower-left and upper-right corners of  $M_1$, respectively.

Now we consider the map
\begin{eqnarray*}
\Phi_0&:& G'\longrightarrow\mathcal{O}_{G}(0)\\
&& g\longmapsto g(0).
\end{eqnarray*}
Note that $\Phi_0$ is an isomorphism. In fact,
for any $g(0)\in \mathcal{O}_{G}(0)$, if $g$ is orientation-preserving, then $\Phi_0(g)=g(0)$; otherwise, $\Phi_0(g\circ R_D)=g(0)$.
So $\Phi_0$ is a surjection. On the other hand, if $\Phi_0(g_1)=\Phi_0(g_2)$for any $g_1, g_2\in G'$, then Case (2) of Corollary
$\ref{three_circle_theorem}$ gives $g_1=g_2$. So $\Phi_0$ is a injection.

Similarly, we can also define the isomorphism
\begin{eqnarray*}
\Phi_c &:& G'\longrightarrow\mathcal{O}_{G}(c)\\
&& g\longmapsto g(c).
\end{eqnarray*}
These isomorphisms $\Phi_0$ and $\Phi_c$ imply that
\begin{equation}
\#\mathcal{O}_{G}(0)=\#G'=\#\mathcal{O}_{G}(c)
\end{equation}
On the other hand, $f$ induces the following isomorphism
\begin{eqnarray*}
f_*&:& G\longrightarrow G \\
&& g\longmapsto f\circ g\circ f^{-1}.
\end{eqnarray*}
We denote by $m=f(0)$. Then
\begin{eqnarray*}
\mathcal{O}_G(m)&=&\{g(m):g\in G\}=\{f\circ g\circ f^{-1}(m):g\in G\} \\
&=&\{f\circ g(0):g\in G\}=f(\mathcal{O}_G(0)).
\end{eqnarray*}
Hence
\begin{equation*}
\#\mathcal{O}_G(m)=\#G'=\#\mathcal{O}_G(0)
\end{equation*}
and so the orbits $\mathcal{O}_G(m)$ and $\mathcal{O}_G(c)$ have the same number of elements.

If $G'\neq\{\mathrm{id}\}$, we claim that $G'$ is a cyclic group of order $3$.
Indeed, for any $g\neq \mathrm{id}$ in $G'$, $g(M_3)\neq M_3$, otherwise Case $(1)$ of Corollary $\ref{three_circle_theorem}$ implies $g=\textup{id}$.
By Corollary $\ref{qs_self_maps}$, either $g(M_3)=M_4,g(M_4)=(M_2)$ or $g(M_3)=M_2, g(M_2)=(M_4)$.
In both cases, $g$ is of order $3$, a.e., $g^3=\mathrm{id}$. Use Corollary $\ref{qs_self_maps}$
again we know that $G'$ is generated by $g$. So the claim follows.

Hence, we have $\#\mathcal{O}_G(m)=\#G'=1\textup{ or }3$.
There must be some $h\in G$ with $h(m)=c\textup{ or }c'$.
Otherwise, $\mathcal{O}_G(m)$ does not
contain $c, c'$. For any point $p\in \mathcal{O}_G(m)$, the point $R_D(p)\in \mathcal{O}_G(m)$ and $R_D(p)\neq p$.
Then $\#\mathcal{O}_G(m)$ is even, which is impossible.

By Lemma $\ref{lem_weak_space}$, $h\circ f$ induces a normalizaed quasisymmetric map between the weak tangent $W_{F_{n,p}}(0)$ and $W_{F_{n,p}}(c)$
or $W_{F_{n,p}}(c')$. This contradicts Proposition $\ref{pro_weak_space_normalized_qs}$. So we have proved the proposition.
\end{proof}

\begin{proof}[Proof of Theorem 2]
We adopt the notations as in previous. The proof of Proposition $\ref{pro_noninterchange}$ implies that $G'$ is a cyclic group of order $3$
or a trivial group. To prove the theorem, it suffices to show that the former case cannot happen. We argue by contraction and assume that $G'$ is a cyclic
group of order $3$.

By Theorem $\ref{uniformization_by_round_carpets}$, there exists a quasisymmetric map $f$ from $F_{n,p}$ onto some round carpet $S$. After post-composing
suitable fraction linear transformation, we can assume that the $f(O)$ is the unit disc $\mathbb{D}$ and $f(M_1)$ lies in $\mathbb{D}$ with center $(0,0)$.
Then $f$ induces the isomorphism
\begin{eqnarray*}
f_* &:& QS(F_{n,p})\longrightarrow QS(S)\\
&& g\longmapsto f\circ g\circ f^{-1}.
\end{eqnarray*}
Combined with Theorem $\ref{quasisymmetric_rigidity_of_round_carpets}$,  $f_*(G')$ is a cyclic group
of order $3$ consisting of M\"{o}bius transformations. Moreover, elements in $f_*(G')$ preserves $\partial\mathbb{D}$ and the circle $O_1=f(M_1)$. Hence we have
$$f_*(G')=\{\textup{id},z\mapsto e^{2\pi i/3}z,z\mapsto\ e^{4\pi i/3}z\}.$$

\textbf{Claim:} $O_2=f(M_2),O_3=f(M_3),O_4=f(M_4)$
are round circles with the same diameter and  equidistributed clockwise
in the annuli bounded by $\partial\mathbb{D}\textup{ and }O_1$.

\emph{Proof of the claim}: In fact, by the proof of Proposition \ref{pro_noninterchange}, we may assume that $G'=<g>$,
where $g(M_3)=M_4, g(M_4)=M_2$ and $g(M_2)=M_3$. Note that
\begin{eqnarray*}
O_3&=& f(M_3)=f\circ g (M_2)) \\
&=& f\circ g\circ f^{-1} (O_2)
\end{eqnarray*}
where $f\circ g\circ f^{-1}$ is equal to the rotation $z\mapsto e^{2\pi i/3}z$.
Similarity, one can show that $O_4=f\circ g\circ f^{-1} (O_3)$. As a result, $O_3$ is obtained from
$O_2$ by a rotation of angle $2\pi/3$ and $O_4$ is obtained from
$O_2$ by a rotation of angle $4\pi/3$. The claim follows.

Let $R$ be the rotation in the isometry group of $F_{n,p}$ with $R(M_1)=M_2, R(M_2)=M_3, R(M_3)=M_4$, and $R(M_4)=M_1$. By Theorem $\ref{quasisymmetric_rigidity_of_round_carpets}$,
the composition
$$h=f\circ R\circ f^{-1}:S\rightarrow S$$ is also a M\"{o}bius transformation which maps $\partial\mathbb{D}
\to\partial\mathbb{D},
O_2\to O_3, O_3\mapsto O_4$.
Such a M\"{o}bius transformation must be $\varphi=z\to e^{2\pi i/3}z$. If not, let $\varphi'$ be  other M\"{o}bius
transformation satisfy the conditions. Then $\varphi'\circ\varphi^{-1}$ fixes three non-concentric circles $\partial\mathbb{D}, O_2
$ and $O_3$ and so $\varphi'\circ\varphi^{-1}=id$. Hence $\varphi'=\varphi$.
But $h(O_1)=O_2$, which is impossible. So the theorem follows.
\end{proof}

\begin{proof}[Proof of Theorem \ref{thm:equivalent}]
Suppose there exists a quasisymmetric map $f:F_{n,p}\rightarrow F_{n',p'}$.

Firstly, we claim that $f(O)=O'$, $f(\{M_1,M_2,M_3,M_4\})=\{M'_1,M'_2,M'_3,M'_4\}$.
Indeed, from Theorem 2, we know that every quasisymmetric self-map of $F_{n,p}$ and $F_{n',p'}$ is isometry and so
preserves the peripheral circle $O$ and $O'$. For any $g$ in $QS(F_{n,p})$, $f\circ g\circ f^{-1}$ is a quasisymmetric self-map of $F_{n',p'}$ and
$f\circ g\circ f^{-1}(f(O))=f(O)$. So $f(O)$ is fixed by any element in $QS(F_{n',p'}$. Hence we have $f(O)=O'$. If for some inner circles $M_i$, say $M_1$, of
 $F_{n,p}$, $f(M_1)$ is not an inner circle of $F_{n',p'}$, then by Proposition
$\ref{quasiconformal_extend}$, $f$ extension to a quasiconformal self-map of $\mathbb{S}^2$.
We have
$$\textup{mod}_{F_{n,p}}(\Gamma(M_1,O))=\textup{mod}_{F_{n',p'}}(\Gamma(f(M_1),O'))$$
and
$$\textup{mod}_{F_{n',p'}}(\Gamma(M'_1,O'))=\textup{mod}_{F_{n,p}}\Gamma(f^{-1}(M'_1),O).$$
While Lemma $\ref{interchange}$ implies $$\textup{mod}_{F_{n',p'}}(\Gamma(f(M_1),O))<\textup{mod}_{F_{n',p'}}(\Gamma(M'_1,O))$$
and
$$\textup{mod}_{F_{n,p}}\Gamma(f^{-1}(M'_1),O)\leq\textup{mod}_{F_{n,p}}(\Gamma(M_1,O)).$$
Hence $\textup{mod}_{F_{n,p}}(\Gamma(M_1,O))<\textup{mod}_{F_{n,p}}(\Gamma(M_1,O))$ and we get a contraction.

Secondly, by pre-composing and post-composing with Euclidean isometries, we
can assume that $f$ is orientation-preserving and $f(M_1)=M'_1$. We claim
that $f((0,0))=(0,0)$ and $f((1,1))=(1,1)$ or interchanges them and $f(M_3)=M'_3$. In fact, the orientation-preserving quasisymmetric map $$f^{-1}\circ R_D\circ f\circ R_D:F_{n,p}\rightarrow F_{n,p}$$
fixes peripheral circles $O$ and $M_1$. Then, by Theorem \ref{thm:rigidity}, $f^{-1}\circ R_D\circ f\circ R_D$ is a Euclidean isometry and
so it is the identity on $F_{n,p}$. This implies $f\circ R_D=R_D\circ f$. Hence the claim follows.

We now distinguish two cases to analyze.

Case $(1)$ $f((0,0))=(0,0)$ and $f((1,1))=(1,1)$.

We denote the reflection in the line $\{(x,y)\in\mathbb{R}^2:x+y=1\}$ by $R_D'$. Then the map $f^{-1}\circ R_D'\circ f \circ R_D'$ is an orientation-preserving
quasisymmetric map in $QS(F_{n,p})$, fixes peripheral circles $O,M_1$, and the point $(0,0)$. Hence this map is the identity on $F_{n,p}$ and so
$f\circ R_D'=R_D'\circ f$. It follows that $f$ fixes $(1,0)$ and $(0,1)$ or interchanges them. Since $f$ is orientation-preserving, the latter cannot happen.
By Theorem $\ref{thm_square_rigidity}$ the map $f$ must be the identity. Hence $(n,p)=(n',p')$.

Case $(2)$ $f((0,0))=(1,1)$ and $f((1,1))=(0,0)$.

The map $g=R_D\circ f\circ R_D':F_{n,p}\rightarrow F_{n',p'}$ is an orientation-preserving quasisymmetry which fixes points $(0,0)$ and $(1,1)$
and peripheral circle $O$ and maps $M_1$ to $M'_3$. Similar to Case $(1)$, $g^{-1}\circ R_D'\circ g\circ R_D'$ is an orientation-preserving isometry map fixing $(0,0),(1,1)$ and $O$ and so is the identity.
Then $g$ fixes $(1,0)$ and $(0,1)$ or interchanges them. The orientation-preserving of $g$ implies the latter case is impossible. By Theorem
$\ref{thm_square_rigidity}$ the map $g$ is the identity, which contradicts with $g(M_1)=M'_3$. So case $(2)$ can not happen.

\end{proof}



\end{document}